\DeclareMathOperator{\Aut}{Aut} 
\DeclareMathOperator{\End}{End}
\DeclareMathOperator{\Hom}{Hom}
\DeclareMathOperator{\ev}{ev}
\DeclareMathOperator{\coev}{coev}
\DeclareMathOperator{\tev}{\widetilde{ev}}
\DeclareMathOperator{\tcoev}{\widetilde{coev}}
\newcommand\be{\begin{equation}}
\newcommand\ee{\end{equation}}
\theoremstyle{plain}
\newtheorem{theorem}{Theorem}
\newtheorem{lemma}[theorem]{Lemma}
\newtheorem{proposition}[theorem]{Proposition}
\newtheorem{corollary}[theorem]{Corollary}
\theoremstyle{definition}
\newtheorem{remark}[theorem]{Remark}
\newtheorem{definition}[theorem]{Definition}
\newtheorem{notation}[theorem]{Notations}
\newtheorem{example}[theorem]{Example}
\numberwithin{equation}{section}
\numberwithin{theorem}{section}
\newcounter{ourcount}
\newcommand{\id}{\mathrm{id}}
\newcommand{\oN}{\mathbb{N}}
\newcommand{\ok}{{\ensuremath{\Bbbk}}}
\newcommand{\cat}{\mathcal{C}}
\newcommand{\catD}{\mathcal{D}}
\newcommand{\vect}{\mathrm{{\bf vect}}}
\newcommand{\tensor}{\otimes}
\newcommand{\one}{\boldsymbol{1}}
\newcommand{\Uqua}{U_{\mbox{}\!\!q}\,\mathfrak{sl}_2}
\newcommand{\C}{\ensuremath{\mathbb{C}} }
\newcommand{\CC}{\mathcal{C}}
\newcommand{\ideal}{\mathcal{I}}
\newcommand{\Tr}{\mathrm{Tr}}
\renewcommand{\t}{{\mathsf{t}}}
\newcommand{\tr}{\operatorname{tr}}
\newcommand{\rptr}{\tr^{\Sigma, r}}
\newcommand{\lptr}{\tr^{\Sigma,l}}
\newcommand{\ptr}{\tr^{\Sigma}}
\newcommand{\catd}{\mathcal{C}}
\newcommand{\Id}{\operatorname{Id}}
\newcommand{\Apmod}{A\operatorname{-pmod}}
\newcommand{\Amod}{A\operatorname{-mod}}
\newcommand{\Bpmod}{B\operatorname{-pmod}}
\newcommand{\Bmod}{B\operatorname{-mod}}
\newcommand{\Hxmod}[1]{H_#1\operatorname{-mod}}
\newcommand{\Res}{\mathrm{Res}}
\newcommand{\HH}{\operatorname{HH}_0}
\DeclareMathOperator{\Hpmod}{{\it H}-pmod}
\DeclareMathOperator{\Hmod}{{\it H}-mod}
\newcommand{\coint}{{\boldsymbol{c}}}
\newcommand{\rint}{{\boldsymbol{\mu}}}
\newcommand{\pivot}{{\boldsymbol{g}}}
\newcommand{\ecat}{(\catD,\Sigma)}
\newcommand{\prog}{\Gamma}
\newcommand{\group}{G}
\newcommand{\modcat}{\mathcal{M}}
\newcommand{\modcatN}{\mathcal{N}}
\newcommand{\gp}{\ .}
\newcommand{\gc}{\ ,}
\newcommand{\qcq}{\quad , \quad } 
\newcommand{\ipic}[3][-0.5]{\raisebox{#1\height}{\scalebox{#3}{\includegraphics{pics/#2.pdf}}}}
\newcommand{\ipicc}[3][-0.4]{\raisebox{#1\height}{\scalebox{#3}{\includegraphics{pics/#2.pdf}}}}
\newcommand{\oC}{\mathbb{C}}
\newcommand{\proj}{\mathsf{Proj}}
\newcommand{\tP}{\hat{P}}
\newcommand{\B}{{\mathsf{B}}}
\newcommand{\A}{{\mathsf{A}}}
\begin{document}
\thispagestyle{empty}
\def\thefootnote{\fnsymbol{footnote}}
\begin{flushright}
ZMP-HH/18-17\\
Hamburger Beitr\"age zur Mathematik 747
\end{flushright}
\vskip 4em
\title{Module traces and Hopf group-coalgebras}

\author[A.F. Fontalvo Orozco]{Andres F. Fontalvo Orozco}  
\address{University of Zurich, I-Math, Winterthurerstrasse 190, CH-8057 Zurich, Switzerland.} 
\email{andres.fontalvoorozco@math.uzh.ch}

\author[A.M.\,Gainutdinov]{Azat M. Gainutdinov}
\address{Institut Denis Poisson, CNRS, Universit\'e de Tours, Universit\'e d'Orl\'eans, Parc de Grammont, 37200 Tours, France. }
\email{azat.gainutdinov@lmpt.univ-tours.fr}

\maketitle
\begin{abstract}
Let $H$ be a finite-dimensional pivotal and unimodular Hopf algebra over a field~$\ok$. It was shown 
 in~\cite{BBG} that the projective tensor ideal in $\Hmod$ admits a unique non-degenerate  modified trace, a natural generalisation of the categorical trace.
This paper provides an extension of this result to a much more general setting.
We first extend the notion of the modified trace to the so-called  module trace
for a given $\ok$-linear module  category $\modcat$ over a pivotal category $\CC$ equipped with a module endofunctor. 
We provide a non-trivial class of examples of such module traces. In particular, we show that any finite-dimensional
pivotal Hopf $\ok$-algebra, not necessarily unimodular, admits a non-degenerate module trace on its projective tensor ideal. We also extend this result to
pivotal Hopf group-coalgebras  of finite type, and we give explicit calculations for the family of Taft Hopf algebras and non-restricted Borel quantum groups at roots of unity.
\end{abstract}

\tableofcontents

\section{Introduction}
Let $\cat$ be a pivotal $\ok$-linear category, which means that $\cat$ is a rigid monoidal category where the left duality functor ${}^*(-)$ agrees monoidally with the right one  $(-)^*$. We will also assume that the endomorphism ring of the tensor unit $\one$  is the ground field $\ok$. This is the context where one can introduce~\cite{Turaev_book} an important notion of the (right) categorical trace $\tr^\cat$. It is a family of $\ok$-linear maps
\begin{align*}
 &\tr^\cat_V\colon \; \End_\cat (V) \to \ok\gc\\
   &\tr_V^\cat(f) = \widetilde{\ev}_V \circ (f\tensor \id_{V^*}) \circ \coev_V \; \in \; \End_\cat(\one)\gc
\end{align*}
where the corresponding endomorphism of $\one$ can be identified with an element of~$\ok$, and here we used the left coevaluation map $\coev_V\colon \one\to V\tensor V^*$ and the right evaluation  map $\widetilde{\ev}_V\colon V\tensor V^*\to \one$.  One can similarly introduce the left categorical trace using the other pair of duality maps.  

The trace $\tr^\cat$ plays a central role in the Reshetikhin--Turaev (RT) construction~\cite{retu, Turaev_book} of isotopy invariants of links, here one should also assume that $\cat$ is ribbon and in this case the left and right categorical traces agree. A coloring of the link components by objects from~$\cat$ provides an endomorphism in $\cat$ that can be evaluated with the categorical trace and this gives a numerical topological  invariant of the link. This invariant can be further extended to the so-called Reshetikhin--Turaev--Witten (RTW) invariant of oriented 3d manifolds via the surgery presentation where one additionally assumes $\cat$ to be semisimple and modular. 

Recent developments in the low-dimensional topology~\cite{Kerler:2001, Geer:2009, TraIdeal, CGP, BCGP, BBGe, RGP} have been paying more attention to \textsl{non-semisimple} ribbon categories.
The belief and a matter of fact is that the non-semisimple  categories might provide finer topological invariants than RTW ones~\cite{CGP,BCGP}. 
The problem here with the RT construction of link invariants  is that it does not basically see a difference between a non-semisimple category~$\cat$ and its semi-simplification $\cat^{\mathrm{s.s.}}$, the reason is that many morphisms in $\cat$ are in the kernel of~$\tr^\cat$, so effectively $\tr^\cat$ factors through $\cat^{\mathrm{s.s.}}$.
 Actually, $\tr^\cat$ provides a non-degenerate pairing of $\mathrm{Hom}_\cat$ spaces if and only if $\cat$ is semisimple. 

The  problem of degeneracy of $\tr^\cat$ appears, for example, in representation categories of quantised Lie algebras $U_q \,\mathfrak{g}$ at $q$ roots of unity and for super Lie algebras even at generic values of $q$.
To overcome the problem of degeneracy of the categorical trace, an axiomatic approach to traces on tensor ideals was proposed in~\cite{Geer:2009, ambi, TraIdeal}. We recall that a \textit{right tensor ideal~$\ideal$} in $\cat$ is a full subcategory closed under retracts (i.e.\ direct summands in the abelian case) and under tensor products of type $X\tensor V$, for $X\in \ideal$ and $V\in\cat$, and similarly for left ideals.

A {\em right  trace on a right tensor ideal $\ideal$}, also called {\em right modified trace}, is  a family of $\ok$-linear functions 
\be\label{def:mod-tr}
\{\t_X\colon \End_\cat(X)\rightarrow \ok \}_{X\in \ideal}
\ee
satisfying cyclicity $\t_X(g \circ f)=\t_{Y}(f  \circ g)$, for any two morphisms $f\colon X\to Y$ and $g\colon Y\to X$, and the  right partial
trace property:
\begin{equation}\label{rpartial}
\t_{X\otimes V}\left(f \right)
=\t_X \left(\,\ipicc{part-tr-right}{.16}\, \right)
\put(-42,1){{\footnotesize $f$}}\put(-47,-25){{\tiny $X$}}\put(-38,-17){{\tiny $V$}} 
\end{equation}
for $X\in\ideal$, $V\in\cat$, and any $f\in \End_\cat(X\otimes V)$, and here we used the standard graphical presentation of  $\coev_V$ and  $\widetilde{\ev}_V$ maps. 
A left modified trace on left tensor ideals is defined similarly.
This modified trace comes as a very natural generalisation of the categorical trace, which is the modified trace for the choice $\ideal=\cat$. Indeed, if $\cat$ is semisimple there exists a unique up-to-scalar and non-degenerate solution to the both conditions (cyclicity and partial trace)   proportional to~$\tr^\cat$.
   
 The modified trace plays an important role in several types of generalisations of the RT and RTW constructions.
In particular, an extension based on the Hennings construction~\cite{Hennings} of three-dimensional manifold invariants with links was provided in~\cite{BBGe}. Furthermore, for a factorisable Hopf algebra this construction was extended till the level of 3d TQFTs~\cite{RGP}. 
The starting step here is to construct invariants of framed links with
 components  colored by objects from a tensor ideal $\ideal$ in a given ribbon category $\cat$, e.g.\ the ideal of projective objects. 
 This is like in the RT construction, however an endomorphism in $\ideal$ corresponding to the link should be evaluated with the modified trace $\t$ on $\ideal$ instead of $\tr^\cat$.
As we already mentioned, the categorical trace is very often zero on such endomorphisms and the standard Reshetikhin--Turaev prescription would  give here a trivial invariant.

For a large class of tensor ideals in a pivotal category $\cat$ and under rather technical assumptions on these ideals, modified traces were described in~\cite{ambi}. In particular, it was shown~\cite{GR} that any finite factorisable category (and thus any modular tensor category in the sense of~\cite{Shimizu}) admits a unique and non-degenerate modified trace on its ideal of projective objects. However, an explicit construction was either missing or too complicated for actual calculations.

A good amount of examples of pivotal categories are provided by pivotal Hopf algebras, those where the square of the antipode can be expressed via the conjugation by a group-like element, called \textit{pivot} $\pivot$.
For a finite-dimensional pivotal Hopf algebra $H$ over a field $\ok$
and for its tensor ideal  $\Hpmod$ of projective $H$-modules,   it was recently proven in~\cite{BBG} that a non-zero modified trace  exists and unique (up to a scalar)  under the \textsl{unimodularity} condition on $H$, i.e.\ when the spaces of left and right co-integrals of $H$ agree. Moreover, the trace was constructed using the Hopf algebra integral: the (right) modified trace $\t$ on $\Hpmod$ is uniquely defined by its values on the regular representation $H$, and the formula of~\cite{BBG} says that 
\be
\t_H(f) = \rint\bigl(\pivot \cdot f(1)\bigr)\gc \qquad f\in \End_H(H)\gc
\ee
where $\rint\in H^*$ is the right integral. Typical examples here are the small quantum groups~\cite{Lusztig} associated to any simple Lie algebra $\mathfrak{g}$ of finite rank. In this case, the integrals and thus the modified traces can be computed explicitly, see~\cite[Secs.\,7\,\&\,8]{BBG} for ADE type.

 The result of~\cite{BBG} was then further generalized~\cite{Phu} to the case of pivotal and unimodular Hopf $G$-coalgebras of finite type, where $G$ is a group. Here again, the theory of integrals, now $G$-integrals, plays a crucial role in constructing the modified trace. What is interesting about this last generalisation is that it provides modified traces for a large class of not necessarily finite  categories, here the categories are so-called $G$-graded categories, the only condition is that the category should be grade-wise finite.

Our first extension of the main theorem in~\cite{BBG} is that the modified trace on $\Hpmod$ exists if and only if the algebra is unimodular, the same applies to Hopf $G$-coalgebras. 
Therefore, if a pivotal Hopf algebra is not unimodular its tensor ideal $\Hpmod$ does not posses a non-zero modified trace.  
The famous Hopf algebras such as Sweedler's 4-dimensional one, the Taft Hopf algebras and more generally the Borel parts of quantum groups are all non-unimodular, and therefore they do not have modified traces.  And one of the motivations of our work was to explore further this non-unimodular case and to see if a more general notion of a trace plays a role similar to the modified trace.

In this paper, we first introduce a slight generalisation of the modified trace  for a general module category $\modcat$ over $\cat$ equipped with a module endofunctor $\Sigma$, called below {\em module trace} in Definition~\ref{def:modtrace}. This was inspired by the concept of trace maps on endocategories developed in~\cite{QLHo}. Note that a tensor ideal is nothing but a particular example of a module category with identity endofunctor, we provide also more examples of pairs $(\modcat,\Sigma)$. Such  a generalisation turned out to be a useful framework for studying analogues of modified traces  for Hopf algebras and Hopf $G$-coalgebras which are not necessarily unimodular.

 \newcommand{\modulus}{{\boldsymbol{\alpha}}}

We recall that a Hopf algebra $H$ is non-unimodular if and only if the distinguished group-like element $\modulus\in H^*$, also called \textit{modulus}, is not equal to the counit $\varepsilon$ of $H$. The modulus provides an algebra automorphism:
\be\label{eq:varphi-auto}
\varphi\colon H \to H\gc \qquad h\mapsto (\modulus\tensor \id)\circ\Delta(h)
\ee
and therefore this defines an endofunctor $\varphi_*$ on $\Hpmod$ given by the twisting $P\mapsto {}_\varphi P$,  where  ${}_\varphi P$ is the same vector space as $P$ but with the $H$-action twisted by $\varphi$. Note that this functor is the identity if and only if the algebra is unimodular. A first reasonable problem here is  existence  of a module trace on the module category $\modcat=\Hpmod$ equipped with the module endofunctor $\Sigma=\varphi_*$. In other words, we are interested in a certain ``twisted" version of the modified trace on $\Hpmod$ which is given by
 a family of $\ok$-linear maps 
  \be
  \{\t_P:\Hom_\cat\bigl(P,\varphi_*(P)\bigr)\rightarrow\ok\}_{P\in\Hpmod}
  \ee
which are $\varphi_*$-cyclic in the sense 
 \be
 \t_P(f\circ g)=\t_{P'}\big(\varphi_* (g)\circ f\big)
 \ee
  for all   morphisms $P\stackrel{g}{\rightarrow}P'$ and $P'\stackrel{f}{\rightarrow}\varphi_*(P)$,
  and the maps $\t_P$  satisfy a twisted analogue of the right partial trace condition formulated in~\eqref{eq:rpartial}.
Our main result is that any finite-dimensional pivotal Hopf algebra admits such a
module trace, and it is again, as in~\cite{BBG}, related to the space of integrals in $H$.

\begin{theorem}
Let $(H,\pivot)$ be a finite dimensional pivotal Hopf algebra over a field $\ok$ with modulus $\modulus$. The space of right module traces on $(\Hpmod,\varphi_*)$ is $1$-dimensional
and the traces are determined uniquely  by its values on the regular representation:
\be
 \t_H(f)=\rint\bigl(\pivot\cdot f(1)\bigr)\gc \qquad f\in \Hom_H(H,{}_\varphi H)\gc
\ee 
where $\rint$ is the right integral.

In particular, $\Hpmod$ has a non-zero modified trace if and only if $H$ is unimodular. 
\end{theorem}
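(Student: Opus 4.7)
The plan is to reduce to a single linear functional on $H$ and then invoke the one-dimensionality of right integrals. Since $H$ is a projective generator of $\Hpmod$ and both cyclicity and the right partial trace pass through direct sums and retracts, any right module trace is determined by $\t_H\colon \Hom_H(H,{}_\varphi H)\to\ok$. A left $H$-linear map $f\colon H\to{}_\varphi H$ satisfies $f(h)=\varphi(h)f(1)$, giving the canonical identification $\Hom_H(H,{}_\varphi H)\xrightarrow{\sim}H$, $f\mapsto f(1)$, so that $\t_H$ corresponds to a linear form $\tau\colon H\to\ok$. Writing $r_a(h)=ha\in\End_H(H)$ and $f_b(h)=\varphi(h)b$, a direct computation gives $(f_b\circ r_a)(1)=\varphi(a)b$ while $(\varphi_*(r_a)\circ f_b)(1)=ba$, so the $\varphi_*$-cyclicity specialised to $H$ reads
\be
\tau(\varphi(a)b)=\tau(ba)\qquad\text{for all } a,b\in H.
\ee

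For existence I would set $\tau(b):=\rint(\pivot\cdot b)$ and verify the axioms. The twisted cyclicity, after the substitution $x=\pivot b$, becomes $\rint(xa)=\rint(\nu(a)x)$ with $\nu(a)=\pivot\varphi(a)\pivot^{-1}$, and this is exactly the Nakayama automorphism of the Frobenius form $\rint$ on the (possibly non-unimodular) Hopf algebra $H$, a standard consequence of Radford's $S^4$-formula together with the defining property of the right integral. Verification of the right partial trace is the technical heart: for $f\colon H\otimes V\to{}_\varphi(H\otimes V)$, I would first exploit the identity $\Delta\circ\varphi=(\varphi\otimes\id)\circ\Delta$ to identify ${}_\varphi(H\otimes V)\cong{}_\varphi H\otimes V$, then straighten the diagonal action by the standard trivialisation $H\otimes V\cong H\otimes V_{\mathrm{triv}}$, expand the categorical partial trace in coordinates, and recognise the result as a single evaluation of $\tau$ against an element built from the pivot-twisted dualities; this matches term by term because $\pivot$ implements precisely the difference between the left and right duality maps.

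Uniqueness and non-degeneracy then follow from the rigidity of right integrals. Any $\tau$ satisfying the twisted cyclicity together with the partial trace property applied to a suitable test object (e.g.\ $V=H^*$) transforms into an $H^*$-invariance condition on $\tau$; the space of linear forms satisfying it is exhausted by right integrals composed with left multiplication by $\pivot$, yielding the one-dimensional family $c\cdot\rint(\pivot\cdot\,)$. Non-degeneracy of the induced pairing $(f,g)\mapsto\t_P(g\circ f)$ on $\Hom_H(P,{}_\varphi P)\times\Hom_H({}_\varphi P,P)$ is inherited from the Frobenius non-degeneracy of $\rint$. The ``in particular'' is then immediate: a modified trace on $\Hpmod$ is a module trace for the identity endofunctor, so by the uniqueness above it must take the form $c\cdot\rint(\pivot\cdot\,)$ while satisfying the untwisted cyclicity $\tau(ab)=\tau(ba)$; comparison with the twisted version forces $\varphi=\id$, equivalently $\modulus=\varepsilon$, i.e.\ $H$ is unimodular.

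The main obstacle I anticipate is the partial trace identity: the graphical calculus of the right duality in $\Hpmod$ mixes the antipode, the pivot, and (now) the twist $\varphi$ on the target module, so translating $\t_{H\otimes V}(f)=\t_H(\mathrm{ptr}\,f)$ into a workable algebraic identity demands careful bookkeeping of how $\coev_V$ and $\widetilde{\ev}_V$ interact with ${}_\varphi(H\otimes V)$. This is also the point where the non-unimodular case diverges essentially from the unimodular setting of~\cite{BBG}, so aligning conventions for $\modulus$, $\varphi$ and $\pivot$ is the most delicate part of the proof.
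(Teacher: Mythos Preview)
Your outline is correct and tracks the paper's strategy closely: reduce to a linear form $\tau$ on $H$ via the projective generator, identify $\varphi_*$-cyclicity with the Nakayama-type relation $\hat\rint(ab)=\hat\rint(\varphi(b)a)$ for the symmetrised integral (this is the paper's Proposition~\ref{prop:twistgint}, deduced exactly as you say from Radford's formula), and for uniqueness read the partial trace property back as the characterising relation $(\hat\rint\otimes\pivot)\circ\Delta(a)=\hat\rint(a)\,1$ for integrals.

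Where you are vague---the partial trace verification you flag as the main obstacle---the paper organises the argument a bit more cleanly than ``expand in coordinates.'' It first proves a Reduction Lemma (Lemma~\ref{lem:red}): by cyclicity it suffices to check \eqref{eq:rpartial} on a single projective generator, so only $f\colon H\otimes H\to{}_\varphi(H\otimes H)$ needs to be handled. Then it conjugates by the explicit $H$-linear isomorphism $\phi\colon H\otimes{}_\varepsilon H\xrightarrow{\sim} H\otimes H$, $h\otimes m\mapsto h_{(1)}\otimes h_{(2)}m$ (your ``standard trivialisation''), after which the partial trace over the now \emph{trivial} tensorand is automatic from cyclicity alone (Lemma~\ref{lem:rpptrivial}), and what remains collapses to the defining relation \eqref{eq:Rintrel} of the symmetrised integral. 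For uniqueness the paper runs the same computation backwards, inserting an arbitrary linear form $\nu\in H^*$ as a test map ${}_\varepsilon H\to\ok$, to force any module trace to satisfy \eqref{eq:Rintrel}; your ``$V=H^*$'' suggestion is in the same spirit but the paper's choice of the trivial module makes the bookkeeping with $\coev$ and $\widetilde\ev$ disappear entirely. Your ``in particular'' argument is exactly the paper's: the partial trace computation is insensitive to which endofunctor one starts with and always yields the integral relation, so the untwisted cyclicity then forces $\modulus=\varepsilon$ by non-degeneracy of $\rint$.
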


This theorem is a part of Corollary~\ref{cor:Hopf} where we prove a more general result: if one replaces the modulus $\modulus$ in~\eqref{eq:varphi-auto} by any other group-like functional on $H$ and considers the corresponding module endofunctor, then the space of module traces is zero dimensional.

Moreover, we actually work in the $G$-graded framework and most of the statements are made for Hopf $G$-coalgebras. In particular,  the full version of the above theorem (including the case of left traces) is Theorem~\ref{thm:main}. The ordinary Hopf algebra case can be always recovered choosing $G$ to be the trivial group. We provide also examples of explicit calculations of the module trace for the already mentioned non-unimodular Hopf algebras: for Taft algebras, and in the case of non-trivial  Hopf $G$-coalgebras, we analyse the positive Borel part of the unrestricted quantum $sl(2)$ at roots of unity. Here, $G$ is some non-finite non-abelian group.

For  module traces in general, we also provide a pull-back construction.  A sufficiently nice functor between module categories enables us to transport trace maps from the target category.  This gives our second  result: an explicit construction of large classes of tensor ideals in $\Hmod$ for $H$ a Hopf group-coalgebra, and module traces on them. The details are in Section~\ref{sec:pullback}. We  consider an application of this pull-back construction to a large class of Hopf algebras related to small quantum groups in our second paper~\cite{FG}.

We  believe that our theory of twisted traces (or module traces more generally) should provide an important ingredient for an extension of the Kuperberg invariants~\cite{Ku} for manifolds involving links, or more generally ribbon graphs. Recall that in the Kuperberg construction a special case is played by so-called balanced Hopf algebras where the square of the antipode is expressed via a conjugation with the distinguished group-like element, we called it here \textit{co-modulus}. Typical examples of such Hopf algebras are provided by non-unimodular Hopf algebras like Borel parts of small quantum groups. Therefore, it would be interesting to study what role our module trace plays in the Kuperberg construction.

We should  mention that while working on this paper we  learned that
 N. Geer, J. Kujawa, and B. Patureau-Mirand
were defining~\cite{GKP18} a notion ``m-trace" which is related to our module trace. However, they use  different techniques and their work generalises the theory of modified traces  as established
in \cite{ambi} in the unimodular case.

The paper is organised as follows. We begin with description of some known results on trace maps on endocategories in Section~\ref{sec:endoc}. In this section we also describe our conventions on module categories and give a definition to module traces, with several examples, and establish few important results on them, like the pull-back construction and the so-called Reduction lemma. Then, we briefly discuss $\group$-categories and their module categories in Section~\ref{sec:Gcat}. In Section~\ref{sec:HopfGC}, we make a short survey of Hopf $\group$-coalgebras and introduce a family of endofunctors on their categories of modules. In Section~\ref{sec:main}, we state and prove our main theorem. In Sections~\ref{sec:taft} and~\ref{sec:Borel}, we consider an application of the main theorem in explicit calculations of module traces for the Taft Hopf algebra and the Borel part of the non-restricted quantum $sl(2)$ at roots of unity. In the appendix, we recall basic definitions around module categories.

\subsection*{Acknowledgements} The authors are grateful 
to NCCR SwissMap for generous support
and to Anna Beliakova, Christian Blanchet, Alexei Davydov, Ngoc Phu Ha, Krzysztof Karol Putyra, 
and Ingo Runkel
for helpful discussions. 
The authors are also thankful to the organizers of conference ``TQFT and Categorification" in Cargese in April, 2018, where some of our results were presented. AFFO thanks the University of Tours for their hospitality during June 2018. AFFO is supported by NCCR SwissMap.  AMG also thanks Institute of Mathematics in Zurich University for kind hospitality during 2017 and 2018.
AMG is supported by CNRS and also thanks the Humboldt Foundation for a partial financial support. 

\section{Endocategories and traces}\label{sec:endoc}

Given a category $\catD$ together with an endofunctor $\Sigma\colon\catd\rightarrow\catd$, we call the pair $\ecat$ an endocategory. Any category can be seen as an endocategory with the identity endofunctor $\Sigma=\Id$. In this case we often write $\catD$ instead of $(\catD,\Id)$. In case $\catD$ carries extra structure, it is assumed, unless otherwise specified, that $\Sigma$ respects this structure in an appropriate sense, for instance, we say $\ecat$ is $\ok$-linear if both $\catD$ and $\Sigma$ are.
For $X\in\catD$ and a morphism $g$, we will  use the short-hands $\Sigma X$  and $\Sigma g$ for $\Sigma(X)$ and  $\Sigma(g)$, respectively. The morphism spaces will be denoted by $\catD(X,Y)$.

The following definition is due to~\cite{QLHo}. 
\begin{definition}\label{def:endotr}
 Given a $\ok$-linear endocategory $\ecat$, a \textit{trace map} on it  is a family of $\ok$-linear maps 
 $$
 \{\t_X
 \colon
 \catD(X,\Sigma X)\rightarrow \ok\}_{X\in\catD}
 $$ 
 such that 
 \be\label{eq:cyc}
 \t_X(f\circ g)=\t_Y\big(\Sigma (g)\circ f\big)
 \ee
  for all   morphisms $X\stackrel{g}{\rightarrow}Y$ and $Y\stackrel{f}{\rightarrow}\Sigma X$.
   We also say in this case that the family $\t$ is \textit{$\Sigma$-cyclic}.
 \end{definition}

 \begin{remark}
 Taking in the above definition $Y=\Sigma X$ and $f=\id$, we see  that trace maps on an endocategory $\ecat$ are $\Sigma$-invariant: 
 $$
 \t_X(g)=\t_{\Sigma X}(\Sigma g)\gp
 $$
  Furthermore, the  case when $\Sigma$ is the identity functor recovers the usual notion of a trace map on a $\ok$-linear category. In particular, the categorical trace $\tr^\CC$ on a pivotal category $\CC$ is a trace map in this sense. 
 \end{remark} 
  A useful construction on the study of trace maps on endocategories is the so-called Hochschild-Mitchel complex $\mathcal{CH}_\bullet\ecat$, first studied in the untwisted case, $\Sigma=\Id$, in \cite{Mitch}, and in the general case in \cite[Sec.\,4]{QLHo}. It is a simplicial vector space defined as follows
 \be\label{def:HMcomplex}
   \mathcal{CH}_n\ecat:=\bigoplus_{X_0,..,X_n\in\catD}\catD(X_0,\Sigma X_n)\otimes_\ok\catD(X_1,X_0)\otimes_\ok...\otimes_\ok\catD(X_n,X_{n-1})\\
\ee
  with face maps
\be \label{def:CHface}
  d_i(f_0\otimes_\ok...\otimes_\ok f_n):=\left\lbrace\begin{array}{lc}f_0\otimes_\ok...\otimes_\ok f_i\circ f_{i+1}\otimes_\ok...\otimes_\ok f_n & i\neq n \\ \Sigma f_n\circ f_0\otimes_\ok f_1\otimes_\ok...\otimes_\ok f_{n-1}&i=n\end{array}\right.
\ee
and degeneracies
\be
 s_i(f_0\otimes_\ok...\otimes_\ok f_n):= f_0\otimes_\ok...\otimes_\ok f_i\otimes_\ok \id_{x_i}\otimes_\ok f_{i+1}\otimes_\ok...\otimes_\ok f_n\,.
\ee 
 
As usual we can see it as a chain complex with boundary maps $\partial=\sum_i(-1)^id_i$. The homology of this complex is denoted by $\operatorname{HH}_\bullet\ecat$ and its $0$-th degree is given by
\be\label{endotra}
\HH\ecat=\bigoplus_{X\in\catD}\catD(X,\Sigma X)\Big/\langle f\circ g-\Sigma(g)\circ f \rangle.
\ee
It is easy to see that trace maps on $\ecat$ are in bijective correspondance with linear forms on $\HH\ecat$. In other words, this space is the universal trace $\Tr\ecat$ of \cite[Sec.\,3]{QLHo}.

\subsection{Module categories and conventions}
In what follows, we assume $\CC$ to be a $\ok$-linear monoidal category.
A brief introduction to $\CC$-module categories and $\CC$-module functors can be found in Appendix \ref{app:modcat}, the reader is also encouraged to check with \cite[Ch.\,7]{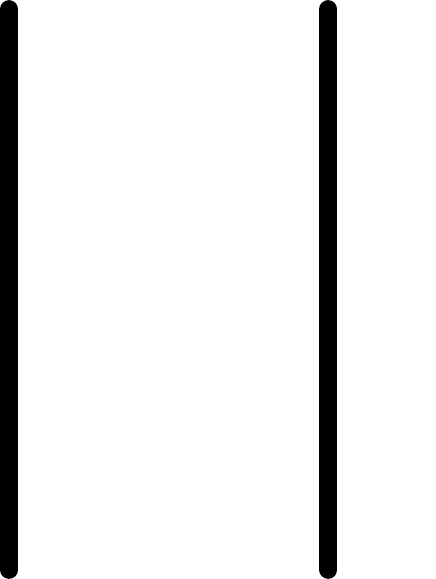} for a more complete account of the module category theory. 

Let $\modcat$  be a right $\CC$-module category and $\Sigma\colon \modcat\rightarrow\modcat$ a $\CC$-module endofunctor  with 
its  module structure
 \begin{align}\label{eq:mod-str}
\sigma_{P,V}&\colon\Sigma(P\odot V)\rightarrow \Sigma(P)\odot V\gp
\end{align}
  
 First, we introduce relevant graphical notations for module categories.  We use standard string diagrams for morphisms, e.g.\ a morphism $f\colon P\rightarrow P'$ in $\modcat$ will be represented by the diagram with a  coupon:
 $$
  \ipic{f}{0.3}
 \put(-17,-2){\tiny $f$}\put(-16,-28){\tiny$P $}\put(-16,22){\tiny$P'$}\ ,
 $$
 and we read the diagrams from bottom to top.
 The action of an endofunctor $\Sigma$ will be represented by dashed lines to the left and the right of the map  it is applied to. For example,
 we will write
  \be
  \ipic{f}{0.3}
  \put(-20,-2){\tiny $\Sigma f$}
  \put(-20,-28){\tiny $\Sigma P$}
  \put(-20,22){\tiny $\Sigma P' $}
  \;=\;\ipic{sigmaf}{0.3}\put(-17,-3){\tiny $f$}\put(-18,-28){\tiny$P $}\put(-18,20){\tiny$P'$}\ .
  \ee
 Furthermore,  the (right) module structure~\eqref{eq:mod-str}
   will be represented with the diagram
 \be
 \ipic{leftmodstr}{0.2}\put(-25,0){\scriptsize $\sigma_{P,V}$}\put(-30,-39){\tiny$P$}\put(-13,-39){\tiny$V$}
 \; =\; 
 \ipic{modstr}{0.3}\put(-45,-35){\tiny$P$}\put(-18,-35){\tiny $V$}
 \ee
and its inverse with the diagram
\be
\ipic{modstrinv}{0.3}.
\ee

 The naturality of $\sigma$ can then be interpreted as the ability of morphisms in $\CC$ to ``move through dashed lines on the right",
 or diagrammatically,
 \be\label{diag:natstr}
 \ipic{modstrnatleft}{0.3}
\put(-24,-16){\tiny $f$} 
 =
 \ipic{modstrnatright}{0.3}\put(-24,12){\tiny $f$}.
 \ee
 
\newcommand{\otact}{\odot}

\subsection{Examples}\label{sec:examples} Here, we recall the following list of rather abstract examples of module categories

\begin{enumerate}
\item A tensor category $\CC$ is a module category over itself, both right and left, with the action $X\otact Y := X\tensor Y$. For $X\in\CC$, the left tensoring functor, $L_X=X\otimes -$, is a right module functor with $\sigma_{Y,Z}=\alpha_{X,Y,Z}$.

\item  A left/right tensor ideal $\ideal$ of $\CC$ can be regarded as a left/right $\CC$-module with a module functor $\Sigma=\Id$.  In particular, the subcategory $\proj(\cat)$ of projective objects is an important example of a tensor ideal. If $\CC$ is pivotal, then $\CC$ has   the categorical trace $\tr^\CC$, and the kernel of $\tr^\CC$ is an interesting instance of a tensor ideal too. However these ideals are non-abelian as module categories if $\CC$ is non semi-simple.

\item Any additive category $\CC$ has a $\vect_\ok$-module structure: for $V\in \vect_\ok$ and $X\in \CC$ we have the action $V\otact X := X^{\oplus \dim(V)}$.

\item Having an algebra object $A \in \CC$, we can consider the category  $\Amod$ of left $A$-modules in $\CC$, i.e.\ pairs $(M,\rho_M)$ where $M\in\CC$ and a morphism $\rho_M\colon A\tensor M\to M$ satisfying obvious ``module map" conditions. $\Amod$ is a right $\CC$-module category by the action $M\odot X := M\tensor  X$ where $M\in \Amod$ and so $M\tensor X$ is clearly a left $A$-module with the action morphism $\rho_M\tensor \id_X$.
We note that $\Amod$ is abelian, and actually any (exact) module category can be realized this way:
for a given exact module category $\modcat$, its algebra $A$ with $\Amod\cong \modcat$ can be reconstructed as an internal Hom, see more details in~\cite{tensor}.

\item A factorizable tensor category $\CC$ has naturally a module structure over its Drinfeld centre $Z(\CC)\cong \CC\boxtimes \CC^{\mathrm{op}}$ with the action $(X\boxtimes Y) \otact M := X\tensor M\tensor Y$.
\end{enumerate}

Hopf algebras provide another class of examples of module categories and their endofunctors:

\begin{enumerate}
\item A Hopf algebra $H$ with an algebra automorphism $\varphi\colon H\to H$ satisfying 
$$
\Delta\circ\varphi=(\id\otimes \varphi)\circ\Delta
$$
 makes $\Hpmod$ a module category over $\Hmod$ with the module endofunctor given by twisting the action of $H$ with $\varphi$. This will be discussed more below.

\item A (right) co-ideal subalgebra $A$, in particular a Hopf subalgebra, in $H$ provides a tensor ideal in $\Hmod$ under the ``pull-back" construction: define $\ideal_A$ to be the full subcategory of $\Hmod$ whose objects are projective as $A$-modules. In other words,
\be\label{eq:IA-def}
\ideal_A := \{\,V\in \Hmod \; | \; \mathrm{Res}(V)\in \Apmod\, \}\gc
\ee
where $\mathrm{Res}\colon \Hmod \to \Amod$ is the restriction functor.
Note that $\Apmod$ is a right module category over $\Hmod$, and $\mathrm{Res}$ is a right module functor. It then follows that $\ideal_A$ is  a right tensor ideal in $\Hmod$, in particular   $I_H$ is $\Hpmod$ and $I_{\langle 1\rangle}=\Hmod$. We provide more details and proofs in Section~\ref{sec:pullback}.
\end{enumerate}

\subsection{Module trace}
For introducing module traces,
we now assume that $\CC$ is a 
pivotal category. 
Recall that in a pivotal category left and right duals agree. Hence, besides usual evaluation and coevaluation morphisms
 $$
 {\ev}_V\colon V^*\otimes V\rightarrow \mathbb{1}\ , \qquad {\coev}\colon \mathbb{1}\rightarrow V\otimes V^*\ ,
 $$
satisfying the zig-zag identities,  there are morphisms 
 $$
 \widetilde{\ev}_V\colon V\otimes V^*\rightarrow \mathbb{1}\ , \qquad \widetilde{\coev}\colon\mathbb{1}\rightarrow V^*\otimes V
 $$
  called right evaluation and right coevaluation respectively, also subject to  the zig-zag identities.
 We follow graphical conventions for these maps given in~\cite{BBG}.

 \begin{definition}
Let $\modcat$ be a right module category over $\CC$ and $\Sigma$ a right $\CC$-module endofunctor on $\modcat$. 
 For $P\in\modcat$, $V\in\catd$ and $f\colon P\otact V\rightarrow \Sigma(P\otact V)$, 
 the $\ok$-linear map
\begin{align} \label{E:PartialLRtrace}
\rptr_V(f)&:=(\id_{\Sigma P} \otact \tev_V)\circ\bigl((\sigma_{P,V}\circ f )\otact \id_{V^*}\bigr)\circ(\id_P \otact \coev_V) \\
& = \; \put(15,-18){{\tiny $V$}}
\;\ipicc{part-tr-righmodt}{.16} \put(-27,1){{\footnotesize $f$}}\put(-34,-25){{\tiny $P$}} \;\in \;\modcat(P,\Sigma P) \notag
\end{align} 
is called  \textit{right partial $\Sigma$-trace}.
\end{definition}

 \begin{definition}\label{def:modtrace}
  A \textit{right module trace} on a right $\CC$-module endocategory $(\modcat,\Sigma)$ is a family of $\ok$-linear maps 
  $$\{\t_P\colon\modcat(P,\Sigma P)\rightarrow\ok\}_{P\in\modcat}$$
   subject to the following two conditions:
 \begin{description}
 \item [$\Sigma$-cyclicity] $\t$ is a trace map on $(\modcat,\Sigma)$ in the sense of Definition~\ref{def:endotr}, 

\item[Right partial trace property]  for $P\in\modcat$, $V\in\CC$ and $f\colon P\otact V\rightarrow \Sigma(P\otact V)$
\be \label{eq:rpartial}
\t_{P\otact V}(f)=\t_P\bigl(\rptr_V(f)\bigr)\ .
\ee
 \end{description}
 Furthermore, we call a trace map $\t$ {\em non-degenerate} if the pairings
 \be
  \modcat(P',\Sigma P)\times\modcat(P,P')\rightarrow\ok\qcq (f,g)\mapsto \t_P(f\circ g)
 \ee
  are non-degenerate for every $P,P'\in\modcat$.
 \end{definition}

 \begin{remark}
 \mbox{}
 \begin{enumerate}
 \item
One has similarly a definition of a left module trace for $\modcat$ a left $\CC$-module category with a left module endofunctor $\Sigma$. Here, we define the left partial $\Sigma$-trace  by the map
 $$
 \lptr_V(f):=(\ev_V \otact\,\id_{\Sigma P})\circ\bigl(\id_{V^*}\otact(\sigma_{V,P}\circ f) \bigr)\circ(\tcoev_V \otact\,\id_P)\ ,
 $$
  where $\sigma_{V,P}\colon\Sigma(V\otact P)\stackrel{\cong}{\rightarrow} V\otact\Sigma(P)$. We say $\t$ is a left module trace if it is $\Sigma$-cyclic and satisfies a left partial trace property analogous to \eqref{eq:rpartial}.
  
 \item
  The theories of left and right module traces are essentially the same: one is the other in the category with the opposite monoidal structure and hence we study in details only one of them, the right one. For convenience, in what follows we will also write $\ptr=\rptr$.
   
   \item Recall from examples in Section~\ref{sec:examples}, that 
 a tensor ideal $\ideal$ of $\CC$ provides a $\CC$-module with the identity module functor. The \textit{modified trace} introduced 
in~\cite{ambi, TraIdeal} can then be recovered as a special case of the above definition of the module trace for $(\ideal,\Id)$. 
 \end{enumerate}
  \end{remark}

It is of course an interesting problem to classify module traces for the class of examples  from Section~\ref{sec:examples}. In the case $\CC$ is a pivotal category, then the module trace on the module endocategory $(\CC,\Id)$ is unique up to a scalar and is given by the categorical trace $\tr^{\CC}$. However, this trace is non-degenerate if and only if $\CC$ is semisimple. For tensor ideals in $\CC$, a classification of module traces (or modified traces in this case) is only partially known, see \cite{BBG,genTrace,TraIdeal,GR}. 
 More explicit examples of module traces are presented below in the case of (non-unimodular) Hopf $\group$-coalgebras.

 \newcommand{\fun}{\mathcal{F}}
 \subsection{Pull-back of module traces}\label{sec:pullback}
 Let $\modcat$ and $\modcatN$ be two right $\CC$-module categories (both with  identity endofunctors) and $\fun\colon \modcat \to \modcatN$ is a $\CC$-module functor. 
\begin{proposition}\label{prop:pullb-ideal}
Let $\modcat,\,\modcatN,\,\fun$ as above, and let $\ideal$ be a $\cat$-submodule category of $\modcatN$ closed under retracts then so is for the full subcategory 
\be\label{eq:fun-star}
\fun^*\ideal:=\{\,X\in\modcat\,|\, \exists Y\in\ideal\colon\,\fun X\cong Y\,\}\gp
\ee
\end{proposition}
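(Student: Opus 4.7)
The plan is to verify the two defining properties of a $\cat$-submodule category closed under retracts for $\fun^*\ideal$: closure under the $\cat$-action and closure under retracts. Both will follow by transport of structure along $\fun$, using that $\fun$ is a $\cat$-module functor and that functors preserve split idempotents.

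First I would handle closure under the $\cat$-action. Take $X \in \fun^*\ideal$ and $V \in \cat$; by definition there exists $Y \in \ideal$ with an isomorphism $\fun X \cong Y$ in $\modcatN$. Since $\fun$ is a $\cat$-module functor, it comes equipped with a natural isomorphism $\fun(X \otact V) \cong \fun(X) \otact V$. Composing with the induced isomorphism $\fun(X) \otact V \cong Y \otact V$ yields $\fun(X \otact V) \cong Y \otact V$. Because $\ideal$ is a $\cat$-submodule of $\modcatN$, the object $Y \otact V$ lies in $\ideal$, whence $X \otact V \in \fun^*\ideal$.

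Second, I would verify closure under retracts. Suppose $X \in \fun^*\ideal$ and $X'$ is a retract of $X$ in $\modcat$, witnessed by morphisms $i\colon X' \to X$ and $p\colon X \to X'$ with $p \circ i = \id_{X'}$. Applying $\fun$ preserves the identity $\fun(p) \circ \fun(i) = \id_{\fun X'}$, so $\fun X'$ is a retract of $\fun X$. Using the isomorphism $\fun X \cong Y$ with $Y \in \ideal$, this retract corresponds to a retract $Y'$ of $Y$ in $\modcatN$ with $\fun X' \cong Y'$. Since $\ideal$ is closed under retracts, $Y' \in \ideal$, and therefore $X' \in \fun^*\ideal$.

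There is no real obstacle here; the only point to be mildly careful about is the interplay between the isomorphism $\fun X \cong Y$ and the retract structure, which is handled by transporting the splitting along the isomorphism (equivalently, using that $\ideal$, being closed under retracts, is in particular closed under isomorphism in $\modcatN$, since any isomorphic object is trivially a retract). With these two verifications the proposition is complete.
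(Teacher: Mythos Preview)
Your proof is correct and follows essentially the same approach as the paper: both verify closure under the $\cat$-action via the module-functor isomorphism $\fun(X\otact V)\cong\fun(X)\otact V$, and closure under retracts by applying $\fun$ to a splitting and using that $\ideal$ is closed under retracts. The only cosmetic difference is that the paper writes ``$\fun(X)\in\ideal$'' directly (tacitly using closure of $\ideal$ under isomorphism), whereas you keep track of the witnessing object $Y\in\ideal$ with $\fun X\cong Y$; as you yourself note, this amounts to the same thing.
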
 
\begin{proof}
We first show that $\fun^*\ideal$ is a $\cat$-submodule category of $\modcat$.
Let $X\in\fun^*\ideal$ and $V\in\CC$. To show  that $X\otact V\in\fun^*\ideal$, we first observe the isomorphism $\fun(X\otact V)\cong \fun(X)\otact V$ which is the module structure of $\fun$. Since $\fun(X)\in \ideal$, we conclude from here that $\fun(X\otact V)$ is also in $\ideal$ and so $X\otact V$ is in $\fun^*\ideal$ as claimed.

To check that $\fun^*\ideal$ is closed under retracts, let $X'\in\modcat$ be a retract of $X$. This means that there exist morphisms $r\colon X\rightarrow X' $ and $\iota\colon X'\rightarrow X $ such that $r\circ\iota=\id_{X'}$. This implies $\fun(r)\circ\fun(\iota)=\id_{\fun X'}$, whence $\fun X'$ is a retract of $\fun X \in\ideal$ and hence $\fun X'\in\ideal$. This shows that $X'\in\fun^*\ideal$.
\end{proof}

But not only $\cat$-submodules can be carried through module functors. Assume that we have a module trace on  $\modcatN$ then one can construct a module trace on $\modcat$ using pull-back along a module functor~$\fun$.

\begin{proposition}\label{prop:pull-back}
Let $\t^{\modcatN}$ be a right module trace on a right $\CC$-module category $\modcatN$ and $\fun\colon \modcat \to \modcatN$ is a $\CC$-module functor, then the family of maps
\be\label{eq:trace-pull-back}
\t^{\modcat}_M(f) := \t^{\modcatN}_{\fun(M)}\bigl(\fun(f)\bigr),
\ee
for $M\in\modcat$ and $f\in\End_{\modcat}(M)$,
defines a right module trace on $\modcat$. 
\end{proposition}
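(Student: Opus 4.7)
The plan is to verify the two conditions in Definition~\ref{def:modtrace} --- cyclicity and the right partial trace property --- for the proposed family $\t^\modcat_M(f) := \t^\modcatN_{\fun M}(\fun f)$, using the corresponding properties of $\t^\modcatN$ together with functoriality and the $\CC$-module structure of $\fun$. Since $\modcat$ and $\modcatN$ carry only the identity endofunctor in this statement (morphisms live in $\End_\modcat(M)$), I drop the $\Sigma$-twist throughout. The $\ok$-linearity of each $\t^\modcat_M$ is immediate from $\ok$-linearity of $\fun$ on hom-spaces and of $\t^\modcatN_{\fun M}$.

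First I verify cyclicity, which is purely formal. For $f\colon M\to N$ and $g\colon N\to M$ in $\modcat$, functoriality of $\fun$ gives $\fun(g\circ f)=\fun g\circ \fun f$ and $\fun(f\circ g)=\fun f\circ \fun g$, so cyclicity of $\t^\modcatN$ yields
$$\t^\modcat_M(g\circ f) \;=\; \t^\modcatN_{\fun M}(\fun g\circ \fun f) \;=\; \t^\modcatN_{\fun N}(\fun f\circ \fun g) \;=\; \t^\modcat_N(f\circ g).$$

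Next I tackle the right partial trace property. Let $M\in\modcat$, $V\in\CC$ and $f\in \End_\modcat(M\otact V)$; denote by $s_{M,V}\colon \fun(M\otact V)\stackrel{\cong}{\to} \fun(M)\otact V$ the module structure isomorphism of $\fun$, and set $\tilde f := s_{M,V}\circ \fun(f)\circ s_{M,V}^{-1}$. Conjugating by $s_{M,V}$ inside $\t^\modcatN$ via cyclicity, and then invoking the right partial trace property of $\t^\modcatN$, gives
$$\t^\modcat_{M\otact V}(f) \;=\; \t^\modcatN_{\fun(M\otact V)}(\fun f) \;=\; \t^\modcatN_{\fun(M)\otact V}(\tilde f) \;=\; \t^\modcatN_{\fun M}\bigl(\rptr_V(\tilde f)\bigr).$$
It therefore suffices to show $\fun(\rptr_V(f)) = \rptr_V(\tilde f)$ as endomorphisms of $\fun M$, since then the right-hand side equals $\t^\modcatN_{\fun M}(\fun(\rptr_V f)) = \t^\modcat_M(\rptr_V f)$, as required.

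This last identification is the only non-formal step, and I expect it to be the main obstacle. The partial trace formula \eqref{E:PartialLRtrace} is assembled from $f$, identities, the action $\otact$, and the $\CC$-morphisms $\coev_V,\tev_V$. Applying $\fun$ to this expression and rewriting each factor $\fun(\id_M\otact\phi)$ using naturality of $s$ in its $\CC$-argument together with the coherence of $s$ with associators and the unit (i.e.\ the $\CC$-module functor axioms), the structure isomorphisms $s$ and $s^{-1}$ telescope to leave $\tilde f$ precisely where $\fun f$ stood, reproducing the formula for $\rptr_V(\tilde f)$. A cleaner way to bypass the bookkeeping is to invoke the strictification theorem for $\CC$-module categories and assume $s_{M,V}=\id$; then $\fun$ intertwines the two partial trace formulas on the nose and $\fun(\rptr_V(f))=\rptr_V(\fun f)=\rptr_V(\tilde f)$ is immediate. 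Either route closes the verification of~\eqref{eq:rpartial} for $\t^\modcat$.
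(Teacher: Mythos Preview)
Your proof is correct and follows essentially the same route as the paper: both verify cyclicity by functoriality, then pass to $\tilde f=\sigma\circ\fun(f)\circ\sigma^{-1}$ via cyclicity of $\t^\modcatN$, apply the partial trace property of $\t^\modcatN$, and finish using the identity $\rptr_V(\tilde f)=\fun(\rptr_V(f))$. The paper merely asserts this last identity (with slightly loose notation), whereas you spell out why it holds via the module-functor coherence of $\sigma$; your strictification remark is a legitimate shortcut the paper does not invoke.
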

\begin{proof}
Let $f\colon N\rightarrow M,\, g\colon M\rightarrow N$, a simple calculation yields
\begin{align*}
\t^\modcat_M(f\circ g)&=\t^\modcatN_{\fun(M)}\bigl(\fun(f\circ g)\bigr)\\
&=\t^\modcatN_{\fun M}\bigl(\fun(f)\circ\fun(g)\bigr)\\
&=\t^\modcatN_{\fun N}\bigl(\fun(g)\circ\fun(f)\bigr)\\
&=\t^\modcat_{N}\bigl(g\circ f\bigr)\gp
\end{align*}
Therefore, $\t^\modcat$ is a trace map on $\modcat$. It remains to check that the partial trace property holds. Let $f\in\End(P\otact V)$, then
\begin{align*}
\t^\modcat_{P\otact V}\bigl(f\bigr)&=\t^\modcatN_{\fun(P\otact V)}\bigl(\fun(f)\bigr)\\
&\stackrel{\text{cycl.}}{=}\t^\modcatN_{\fun(P)\otact V}\bigl(\sigma\fun(f)\sigma^{-1}\bigr)\\
&=\t^\modcatN_{\fun(P)}\bigl(\tr^r_V(\sigma \fun(f)\sigma^{-1}) \bigr)\\
&=\t^\modcatN_{\fun(P)}\bigl(\sigma\fun\bigl(\tr^r_V( f)\bigr) \sigma^{-1}\bigr)\\
&=\t^\modcat_P\bigl(\tr^r_V(f)\bigr)\gp
\end{align*}
Where $\sigma$ is the module structure of $\fun$. This shows that $\t^\modcat$ is a right module trace. 
\end{proof}

An example of  module functors is provided by the restriction functor for a pair $(H,B)$ where $H$ is a  Hopf algebra and $B$ is a right coideal subalgebra, i.e.\ $B$ is a subalgebra such that $\Delta(B)\subset B\tensor H$. 
First, $B\text{-mod}$ has a natural module structure over $\Hmod$ since for any $B$-module $V$ and $H$-module~$X$, we have 
$$
 V\otact X := V\otimes X
$$
 is a $B$-module via the restriction $\Delta|_B\colon B \to B\tensor H$. Let now $\cat=\Hmod$,  we show that the functor $\mathrm{Res}\colon \Hmod \to  B\text{-mod}$ is a right $\cat$-module functor, with the standard $\cat$-module structure on $\cat$. Indeed, for $X,Y\in\Hmod$, the $B$ actions on $\mathrm{Res}(Y\otimes X)$ and $\mathrm{Res}(Y)\otimes X$ agree: for $a\in B$  and $y\otimes x\in\mathrm{Res}(Y\otimes X)$ we have 
 $$
 a\cdot(y\otimes x)=\Delta|_B(a)(y\otimes x)=a_{(1)}y\otimes a_{(2)}x\gc
 $$
  where $a_{(1)}\in B$ and $a_{(2)}\in H$. Therefore,  
  $$
  \id\colon \; \mathrm{Res}(Y\otimes X)\rightarrow\mathrm{Res}(Y)\otimes X
  $$
   is $B$-linear and we can choose the identity map for the right module structure of $\mathrm{Res}$. 

Now, in the context of Proposition~\ref{prop:pullb-ideal}, we choose $\cat=\modcat=\Hmod$ and $\modcatN=\Bmod$, and the module functor $\fun=\Res$. Then note that $\Bpmod\subset \modcatN$ is 
 also a right module category over $\Hmod$.
 We then have $\Res^* \ideal=\ideal_B$, as defined in~\eqref{eq:IA-def}, is a right tensor ideal in $\Hmod$.

Proposition~\ref{prop:pull-back} turns out to be useful in constructing module traces~\cite{FG} for a large class of tensor ideals in $\Hmod$ associated with Hopf subalgebras in a pivotal Hopf algebra~$H$. Note that $\fun = \mathrm{Res}$ is also  a faithful functor, i.e.\ $\fun(f)=0$ iff $f=0$ in this case. 
In particular, we have

\begin{corollary}\label{prop:pullbackunimod}
Let $(H,\pivot)$ be a finite-dimensional pivotal Hopf algebra and $A$ be a unimodular Hopf subalgebra in $H$ such that $\pivot\in A$. Then, there is a
left/right  modified trace on the tensor ideal $\ideal_A\subset\Hmod$ from~\eqref{eq:IA-def} given by the pull-back~\eqref{eq:trace-pull-back} for the choice $\modcat=\ideal_A$ and $\modcatN=\Apmod$.
\end{corollary}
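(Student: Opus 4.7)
The plan is to assemble the corollary from the two pull-back propositions just established together with the BBG existence theorem (or equivalently, the unimodular specialisation of the main theorem of this paper) applied to $A$ itself. First, I would verify that $A$ is a pivotal Hopf algebra with the same pivot $\pivot$: since $A$ is a Hopf subalgebra one has $S_H|_A=S_A$ and $\pivot^{\pm 1}\in A$ (the element $\pivot^{-1}=S(\pivot)$ lies in $A$ because $A$ is closed under the antipode), so the identity $S^2(a)=\pivot\, a\, \pivot^{-1}$ restricts from $H$ to $A$. Combined with the given unimodularity of $A$, this supplies a non-zero right modified trace $\t^A$ on $\Apmod$, normalised on the regular representation by $\t^A_A(f)=\rint_A(\pivot\cdot f(1))$, with $\rint_A\in A^*$ the right integral of $A$; the left version is obtained analogously.

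Next, I would set up the module functor. The restriction $\Res\colon\Hmod\to\Amod$ is a right $\Hmod$-module functor with identity module structure, as worked out in the excerpt for right coideal subalgebras (any Hopf subalgebra being one). One also needs $\Apmod$ to be a $\Hmod$-submodule category of $\Amod$ closed under retracts: closure under retracts is clear, and for $P\in\Apmod$ and $X\in\Hmod$ the $A$-module $P\otimes X$ is a retract of $A^{\oplus n}\otimes X$, which is $A$-free of rank $\dim X$ via the standard $A$-linear isomorphism $a\otimes x\mapsto a_{(1)}\otimes S(a_{(2)})x$ (this uses that $\Delta_H$ takes $A$ into $A\otimes A$).

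Finally, I would invoke Proposition~\ref{prop:pullb-ideal} with $\fun=\Res$ and $\ideal=\Apmod$: by definition $\Res^*\Apmod=\ideal_A$, so this is automatically a right tensor ideal in $\Hmod$. The restriction of $\Res$ to $\ideal_A$ then takes values in $\Apmod$ and remains a $\Hmod$-module functor, so Proposition~\ref{prop:pull-back} transports $\t^A$ along $\Res$ to a right module trace on $(\ideal_A,\Id)$, i.e.\ a modified trace as claimed. The left case is entirely parallel, using the left analogue of the existence theorem and the fact that for a Hopf subalgebra $\Res$ is simultaneously a left module functor. Essentially the whole argument is bookkeeping; the only non-formal ingredient is the Hopf-algebra freeness trick used to check that $\Apmod$ is stable under the $\Hmod$-action, and I would expect this to be the main (but mild) obstacle.
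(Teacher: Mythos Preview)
Your overall architecture matches the paper's: verify that $(A,\pivot)$ is pivotal, invoke BBG for $\t^A$ on $\Apmod$, identify $\Res^*(\Apmod)=\ideal_A$ via Proposition~\ref{prop:pullb-ideal}, and pull back along $\Res$ via Proposition~\ref{prop:pull-back}. Your freeness argument for the $\Hmod$-stability of $\Apmod$ is in fact more explicit than what the paper provides.

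There is, however, a gap precisely where you declare the argument to be ``bookkeeping''. Proposition~\ref{prop:pull-back} requires $\t^A$ to be a right module trace on $\Apmod$ \emph{as an $\Hmod$-module category}: the partial trace property~\eqref{eq:rpartial} must hold for all $V\in\Hmod$, with $\rptr_V$ built from the $\Hmod$-duality maps. What BBG hands you is only the partial trace property for $V\in\Amod$, with $\Amod$-duality. The paper's proof singles out exactly this step: since $\pivot\in A$ and $S_H|_A=S_A$, the (co)evaluation maps $\ev_V,\coev_V,\tev_V,\tcoev_V$ computed in $\Hmod$ coincide, as $A$-linear maps, with those for $\Res V$ in $\Amod$; hence $\rptr_V$ over $\Hmod$ equals $\rptr_{\Res V}$ over $\Amod$, and the BBG partial trace identity upgrades automatically. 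You have already checked the pivot compatibility, so the repair is a single sentence---but this, and not the freeness trick, is the genuine non-formal ingredient.
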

\begin{proof}
Since $A$ is a unimodular Hopf algebra, then there is a (non-degenerate) modified trace or a module trace $\t^A$ on $\Apmod$ considered as a module category over $\Amod$, see ~\cite{BBG} where this trace is expressed via the integral of $A$. Recall that $\Apmod$ is  not not only a tensor ideal in $\Amod$ but
also a module category over $\Hmod$. We show that the family of trace maps~$\t^A$ is  also a module trace over $\Hmod$. We only need to show~\eqref{eq:rpartial} where $V\in\Hmod$, $\Sigma=\Id$ and with the right partial trace map \eqref{E:PartialLRtrace}  defined using $\tev$ and $\coev$ duality maps in $\Hmod$. 
 But this right partial trace property follows from the assumption that
 $A$ and $H$ share the same pivot, and so the pivotal structure on $\Hmod$ agrees with the one in $\Amod$, in particular so do the right partial traces.

As noted above, by Proposition~\ref{prop:pullb-ideal} we have that $\ideal_A=\mathrm{Res}^*(\Apmod)$ is a tensor ideal in $\Hmod$. Then by Proposition~\ref{prop:pull-back} for the choice $\cat=\Hmod$, $\modcat=\ideal_A$ and $\modcatN=\Apmod$, there is indeed a non-zero (recall that here $\fun=\Res$ is faithful) modified trace on $\ideal_A$ given by the pull-back~\eqref{eq:trace-pull-back}.
\end{proof}

 \subsection{Reduction Lemma}
 We now formulate a so-called Reduction Lemma which is a generalisation of a similar lemma from~\cite{BBG}. Recall that an abelian category $\CC$ is said to be finite pivotal if it is finite as an abelian category, rigid monoidal with tensor product linear and exact on each component, and it has a simple tensor unit and a pivotal structure. 
 
 \begin{lemma}[{\em Reduction Lemma}]\label{lem:red}
 Given a finite pivotal category $\CC$ 
  and  a $\CC$-module endo\-functor $\Sigma$ on $\proj(\CC)$, a $\Sigma$-cyclic trace map 
$\{\t_P\colon\cat(P,\Sigma P)\rightarrow\ok\}_{P\in\proj(\CC)}$ satisfies the right partial trace property \eqref{eq:rpartial} if and only  if the following equation holds:
\begin{equation}\label{eq:tGG-tG}
\t_{\prog\otimes\prog}(f)=\t_\prog\bigl(\mathrm{tr}^{\Sigma}_\prog(f)\bigr)
\end{equation}
for all   $f\colon\prog\otimes \prog\rightarrow \Sigma(\prog\otimes \prog)$ and for a projective generator $\prog\in\CC$.
 \end{lemma}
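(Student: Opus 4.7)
The forward direction is immediate: specialising~\eqref{eq:rpartial} to $P = V = \prog$ gives exactly~\eqref{eq:tGG-tG}. For the converse, I would extend the identity
$$\t_{P\otimes V}(f) = \t_P\bigl(\tr^\Sigma_V(f)\bigr)$$
from the generator $(\prog,\prog)$ to all pairs $(P,V)$ with $P\in\proj(\CC)$ and $V\in\CC$ in four successive stages, following the strategy of~\cite{BBG}. Stages~1--2 handle direct sums. For $V = V_1\oplus V_2$ the pivotal duality maps split diagonally, $\coev_V = \sum_i\coev_{V_i}$ and $\tev_V = \sum_i\tev_{V_i}$, whence $\tr^\Sigma_V(f) = \sum_i\tr^\Sigma_{V_i}(f_{ii})$ for the $(i,i)$-block $f_{ii} = \Sigma(\id_P\otimes p_i)\circ f\circ(\id_P\otimes\iota_i)$ induced by $\id_V = \sum_i\iota_i p_i$; on the other side $\Sigma$-cyclicity applied to $f = \sum_i f\circ(\id_P\otimes\iota_i p_i)$ produces $\t_{P\otimes V}(f) = \sum_i\t_{P\otimes V_i}(f_{ii})$, so the equation is additive in $V$ and iteration yields it for $V = \prog^{\oplus n}$. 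A symmetric $\Sigma$-cyclicity argument on the $P$-side, together with the naturality
$$\tr^\Sigma_V\bigl(\Sigma(v\otimes\id_V)\circ h\circ(u\otimes\id_V)\bigr) = \Sigma(v)\circ\tr^\Sigma_V(h)\circ u$$
(a direct consequence of the naturality of $\sigma_{P,V}$), then extends it to $P = \prog^{\oplus m}$.

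Stage~3 is a retract argument: any $P\in\proj(\CC)$ is a summand of some $\prog^{\oplus m}$ via $\iota\colon P\to\prog^{\oplus m}$ and $r\colon\prog^{\oplus m}\to P$ with $r\iota = \id_P$, so $\Sigma$-cyclicity applied to $f = f\circ(r\iota\otimes\id)$ converts $\t_{P\otimes V}(f)$ into $\t_{\prog^{\oplus m}\otimes V}(\widetilde f)$ with $\widetilde f = \Sigma(\iota\otimes\id)\circ f\circ(r\otimes\id)$, and the previous stage combined with the naturality above recovers the identity for arbitrary projective $P$ and $V = \prog^{\oplus n}$. In Stage~4 I would extend to general $V\in\CC$: since $\CC$ is finite pivotal, hence a finite tensor category, $\prog\otimes V$ is itself projective and therefore a summand of some $\prog^{\oplus n}$ via maps $\iota_V\colon\prog\otimes V\to\prog^{\oplus n}$, $r_V\colon\prog^{\oplus n}\to\prog\otimes V$ with $r_V\iota_V = \id_{\prog\otimes V}$. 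A direct $\Sigma$-cyclicity calculation using the direct-sum decomposition of $\prog^{\oplus n}$ would then give, for $P = \prog$,
$$\t_{\prog\otimes V}(f) = \sum_{j=1}^n \t_\prog\bigl(\Sigma(p_j\iota_V)\circ f\circ(r_V\iota_j)\bigr),$$
where $\iota_j,p_j$ are the inclusions and projections for $\prog^{\oplus n}$, and it remains to identify this sum with $\t_\prog\bigl(\tr^\Sigma_V(f)\bigr)$; Stage~3 then propagates the result to arbitrary projective $P$.

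The hard part is this last identification. The right-hand side $\tr^\Sigma_V(f)$ is intrinsically determined by the pivotal structure through $\coev_V$ and $\tev_V$, while the sum on the left depends on the non-canonical retract data $(\iota_V,r_V,\iota_j,p_j)$; consequently the two morphisms $\prog\to\Sigma\prog$ generally differ, and one needs only their equality modulo $\Sigma$-cyclicity, i.e.\ inside $\HH\bigl(\proj(\CC),\Sigma\bigr)$. I expect this to follow from rewriting $\coev_V$ and $\tev_V$ in terms of the retract $(\iota_V,r_V)$ via the adjunction $\Hom(\prog\otimes V,X)\cong\Hom(V,\prog^*\otimes X)$, using the compatibility of the module structure $\sigma_{\prog,V}$ with the duality morphisms, and a diagram chase that reorganises the resulting expression into a cyclic permutation of the defining diagram for $\tr^\Sigma_V(f)$. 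This is the technical core of the lemma and the step where the pivotal structure is genuinely exploited.
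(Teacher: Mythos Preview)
Your Stages~1--3 are correct, though the paper compresses them into one step: rather than first handling direct sums and then retracts, it immediately uses decompositions $\id_P=\sum_i a_i b_i$ and $\id_{P'}=\sum_j a'_j b'_j$ through $\prog$ (available for any $P,P'\in\proj(\CC)$) and a single chain of cyclicity moves to obtain $\t_{P\otimes P'}(f)=\t_P\bigl(\tr^\Sigma_{P'}(f)\bigr)$ for all projective $P,P'$. This is more economical but equivalent to what you do.

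The genuine gap is your Stage~4. You correctly identify that the passage from projective $V$ to arbitrary $V\in\CC$ is the technical core, but your sketch (express $\prog\otimes V$ as a retract of $\prog^{\oplus n}$, then try to match the resulting sum with $\tr^\Sigma_V(f)$ via some adjunction and diagram chase) is not how the argument goes, and as you note yourself the two morphisms $\prog\to\Sigma\prog$ need not agree on the nose. The paper avoids this difficulty entirely by a different and very concrete trick. Set $\hat P:=P\otimes V$ and define morphisms
\[
\A\colon P\otimes P^*\longrightarrow \hat P\otimes\hat P^*,\qquad
\B\colon \hat P\otimes\hat P^*\longrightarrow \Sigma(P\otimes P^*),
\]
where $\A$ is built from $\coev_V$ (inserting a $V\otimes V^*$ pair between $P$ and $P^*$ and regrouping), and $\B$ applies $f$, the module isomorphism $\sigma$, and the evaluations. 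A direct pivotal identity gives $\tr^\Sigma_V(f)=\tr^\Sigma_{P^*}(\B\circ\A)$, and since $P^*$ and $\hat P^*$ are projective one can now invoke the already-established projective case twice:
\[
\t_P\bigl(\tr^\Sigma_V(f)\bigr)
=\t_P\bigl(\tr^\Sigma_{P^*}(\B\circ\A)\bigr)
=\t_{P\otimes P^*}(\B\circ\A)
=\t_{\hat P\otimes\hat P^*}(\Sigma\A\circ\B)
=\t_{\hat P}\bigl(\tr^\Sigma_{\hat P^*}(\Sigma\A\circ\B)\bigr)
=\t_{P\otimes V}(f).
\]
The point is that one never needs to compare $\tr^\Sigma_V(f)$ with non-canonical retract data; instead the pivotal structure is used to convert the partial trace over $V$ into a partial trace over the \emph{projective} object $P^*$, landing back in the case already handled. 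This is the missing idea in your Stage~4.
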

 
 \begin{proof}
 First consider the case when  $P, P'$ are projective. Then, from~\cite[Lem.\,2.2]{BBG}, we have morphisms $a_i\colon\prog\rightarrow P,\,b_i\colon P\rightarrow\prog$ and $a_i'\colon\prog\rightarrow P',\,b_i'\colon P'\rightarrow\prog$ such that
 \be\label{eq:idP-idP}
\id_P=\sum_{i\in I}a_i\circ \id_{\prog}\circ b_i\ ,\qquad
  \id_{P'}=\sum_{{j\in J}}a'_{j}\circ \id_{\prog}\circ b'_{j}\gc
 \ee
 for some finite sets $I$ and $J$.
Using these decompositions of identity maps, a simple computation yields
$$
\t_{P P'} (f)\; =\; 
\t_{P P'}\left(
\ipic{rect-map-abmod}{.21} 
\put(-16,12){\scriptsize $f$}
\put(-27,-7){\scriptsize $a_i$} \put(-11,-7){\scriptsize $a_{j}'$}
\put(-26,-23){\scriptsize $b_i$} \put(-11,-23){\scriptsize $b_j'$}
 \right)
  \; \stackrel{\text{cycl.}}{=} \; 
 \t_{\prog\prog}\left(
\ipic{rect-map-ab-cyclmod}{.21} 
\put(-28,15){\tiny $b_i$} \put(-13,15){\tiny $b_j'$}
\put(-19,-5){\scriptsize $f$}
\put(-29,-22){\tiny $a_i$} \put(-13.5,-22){\tiny $a_j'$}
 \right)
    \; \stackrel{\eqref{eq:tGG-tG}}{=} \; 
   \t_{\prog}\left(
\ipic{rect-map-ab-cycl-tracemod}{.21}
\put(-45.5,19){\tiny $b_i$} \put(-31,19){\tiny $b_j'$}
\put(-37,-3){\scriptsize $f$}
\put(-48,-22){\tiny $a_i$} \put(-31.5,-21.5){\tiny $a_j'$}
 \right)
   \; = \; 
   \t_P\bigl(\tr^\Sigma_{P'}(f)\bigr)\gc
$$
where the tensor product signs were omitted for brevity and the sums are assumed over the repeated indices $i\in I,\,j\in J$. In the third equality, besides \eqref{eq:tGG-tG} we use \eqref{diag:natstr}. In the last step, standard manipulations with duals were used to move maps around the loop, and then the $\Sigma$-cyclicity of $\t$
was applied. 

\begin{figure}
\scalebox{1}[1.15]{
\scriptsize \centerline{\def\svgwidth{9cm}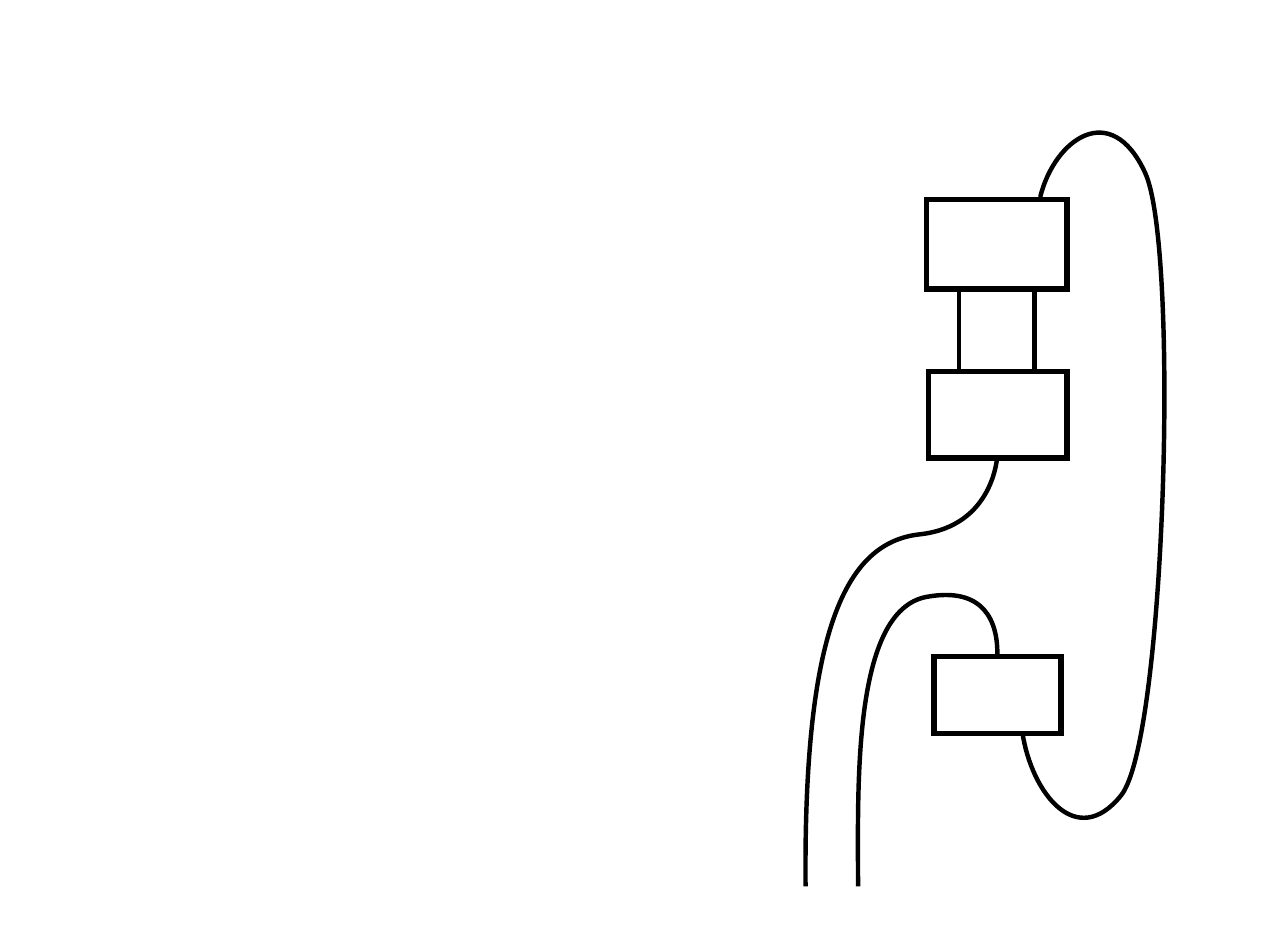}
}
\caption{Morphisms $\A$ and $\B$. }
\label{fig:F}
\end{figure}

It remains to show that the result holds for $P$ projective and $V\in\mathcal{C}$. This follows from the case above. Indeed, set $\tP=P\otimes V$, then given $f\colon\tP\rightarrow\Sigma\tP$ define the auxiliary morphisms $\A,\B$ as presented graphically in Figure \ref{fig:F}. 
Then the following series of equalities hold
\begin{align*}
\t_P(\tr^\Sigma_V(f)) &= \t_P(\tr^\Sigma_{P^*}(\B\circ \A))\\
&= \t_{P\otimes P^*}(\B\circ \A)\\
&= \t_{\tP\otimes\tP^*}(\Sigma\A\circ\B)\\
&= \t_{\hat{P}}(\tr^\Sigma_{\tP^*}(\Sigma\A\circ\B))\\
&= \t_{P\otimes V}(f)\gp
\end{align*}
\end{proof}

  \subsection{Compatibility with duality}
We recall that in any $\Bbbk$-linear pivotal category $\catd$ we have the following duality isomorphisms:
\begin{equation} \label{E:duality_iso_r}
\begin{split}
\begin{array}{rcl}
d^{\cap}\colon\; &\Hom_\catd(W,U\otimes V) \xrightarrow{\sim} \Hom_\catd(W\otimes V^*, U)\\
&f\mapsto (\id_U\otimes \tev_V)\circ (f\otimes \id_{V^*})\ 
\end{array}\
, \qquad
 \ipic{rect-map-1-2-verso}{.15} 
\put(-17,-3){\footnotesize $f$}
 \put(-25,24){\scriptsize $U$} \put(-11,24){\scriptsize $V$} 
\put(-18,-29){\scriptsize $W$} 
  \; \mapsto \; 
\ipic{rect-map-1-2-cap-right}{.15} 
\put(-25,-3){\footnotesize $f$}
 \put(-33,24){\scriptsize $U$} 
\put(-28,-29){\scriptsize $W$}  \put(-4,-29){\scriptsize $V^*$} 
\put(-28,-35){\ }
\\
\begin{array}{rcl}
d_{\cup}\colon\; &\Hom_\catd(U\otimes V, W) \xrightarrow{\sim} \Hom_\catd(U,W\otimes V^*)\\
&f\mapsto(f\otimes \id_{V^*})\circ (\id_U\otimes \coev_V) \ 
\end{array}\
,\qquad
 \ipic{rect-map-1-2}{.15} 
\put(-17,-2){\footnotesize $f$}
\put(-18,24){\scriptsize $W$} 
 \put(-25,-29){\scriptsize $U$} \put(-11,-29){\scriptsize $V$} 
  \; \mapsto \; 
\ipic{rect-map-1-2-cup}{.15} 
\put(-25,-2){\footnotesize $f$}
\put(-28,24){\scriptsize $W$}  \put(-4,24){\scriptsize $V^*$} 
 \put(-33,-29){\scriptsize $U$} 
 \end{split}
\quad .
\end{equation}
One can similarly define another pair of isomorphisms ${}^{\cap}d$ and ${}_{\cup}d$ with (co)evaluation morphisms on the left instead of the right side, see~\cite[Sec.\,3]{BBG}. 

We show next that under some extra assumptions the partial trace property \eqref{eq:rpartial} is equivalent to compatibility of a trace map with the duality isomorphisms $d^{\cap}$ and $d^{\cup}$.  
Let $\CC$ be a  $\Bbbk$-linear pivotal category and $\Sigma$ a $\CC$-module endofunctor of a module category $\modcat$. 
We call a trace map $\{ \t_P\colon\CC(P,\Sigma P)\rightarrow\Bbbk\}$ on $(\modcat,\Sigma)$ 
{\it compatible with  duality on the  right}
 if the following
diagram commutes, for all $U,W\in \modcat,V\in\catd$,
\begin{equation}\label{rcomp}
\xymatrix@R=20pt@C=40pt@W=10pt@M=10pt{
\Hom_{\CC}(U\otimes V,W) \times \Hom_{\CC}(W,\Sigma(U\otimes V)) \ar[r]^{\qquad\quad\circ}\ar@<-45pt>[dd]_{d_\cup}\ar@<25pt>[d]^\sigma&
\Hom_{\CC}(U\otimes V,\Sigma(U\otimes V))\ar@<-0pt>[d]^{\t_{U\otimes V}}\\
\qquad\qquad\qquad\qquad\,\Hom_\CC(W,\Sigma(U)\otimes V)\ar@<25pt>[d]^{d^\cap}& \Bbbk\\
 \Hom_\CC(U,W\otimes V^*) \times  \Hom_\CC(W\otimes V^*, \Sigma(U)) \ar[r]^{\quad\qquad\qquad\circ}& \Hom_{\CC}(U,\Sigma(U)) \ar[u]_{\t_U}
}
\end{equation}
where  the $\sigma$ map stands for post-composition with the $\CC$-module isomorphism  $\sigma_{U,V}$.

We can similarly define a trace map compatible with  duality on the left using analogous diagram to~\eqref{rcomp} involving now  the pair  of isomorphisms  ${}^{\cap}d$ and ${}_{\cup}d$  and composition with the left  module structure $\sigma$.

 \begin{theorem}\label{thm:compdual}
 Let $\CC$ be a finite pivotal category and $\Sigma\in\End_\CC\bigl(\proj(\CC)\bigr)$. A trace map on $\bigl(\proj(\CC),\Sigma\bigr)$ is compatible with duality on the right if and only if it satisfies the right partial trace property.
 \end{theorem}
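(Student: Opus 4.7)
The plan is to reduce the equivalence to a single diagrammatic identity equating the ``bottom leg'' of~\eqref{rcomp} with the right partial $\Sigma$-trace of a composite. Once this identity is in place, both implications drop out by elementary specialisations; no nontrivial use of $\Sigma$-cyclicity or of the finite-pivotal hypothesis is needed.

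First I would unpack the definitions of the duality isomorphisms $d_\cup$ and $d^\cap$ from~\eqref{E:duality_iso_r}, the module structure isomorphism $\sigma_{U,V}$, and the formula~\eqref{E:PartialLRtrace} for $\rptr_V$, to verify the purely diagrammatic identity
\[
d^{\cap}(\sigma_{U,V}\circ g)\circ d_{\cup}(f) \;=\; \rptr_V(g\circ f)
\]
for every $f\colon U\otimes V\to W$ and $g\colon W\to\Sigma(U\otimes V)$ with $U,W\in\proj(\CC)$. This is a one-line verification: plugging in the definitions yields $(\id_{\Sigma U}\otimes\tev_V)\circ((\sigma_{U,V}\circ g\circ f)\otimes\id_{V^*})\circ(\id_U\otimes\coev_V)$ on both sides. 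Since $\proj(\CC)$ is a right tensor ideal of $\CC$, the object $U\otimes V$ lies in $\proj(\CC)$ and all relevant traces make sense.

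Granted this identity, the ``if'' direction is immediate: assuming the right partial trace property~\eqref{eq:rpartial} we obtain $\t_{U\otimes V}(g\circ f)=\t_U(\rptr_V(g\circ f))=\t_U(d^\cap(\sigma_{U,V}\circ g)\circ d_\cup(f))$, which is exactly commutativity of~\eqref{rcomp}. For the ``only if'' direction I would specialise the duality-compatibility diagram to the choice $W=U\otimes V$, $f=\id_{U\otimes V}$, $g=h$ with $h\colon U\otimes V\to\Sigma(U\otimes V)$ arbitrary; commutativity of~\eqref{rcomp} together with the identity above then gives $\t_{U\otimes V}(h)=\t_U(\rptr_V(h))$, which is~\eqref{eq:rpartial}.

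The only real bookkeeping issue, and hence the chief source of potential confusion, lies in keeping track of the placement of $\sigma_{U,V}$ on the second factor of the product and confirming that the ordering of $\tev_V$ and $\coev_V$ in the expressions for $d^\cap\circ d_\cup$ exactly matches that in $\rptr_V$; this is best rendered via the graphical calculus, where both sides become the same string diagram obtained by sandwiching $\sigma_{U,V}\circ g\circ f$ between a $\coev_V$ on the right input leg of $U$ and a $\tev_V$ on the $V$-output leg.
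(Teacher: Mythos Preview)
Your proof is correct and is exactly the ``straightforward'' argument the paper alludes to (the paper's own proof is a one-line reference to~\cite[Thm.\,3.3]{BBG}). The key identity $d^{\cap}(\sigma_{U,V}\circ g)\circ d_{\cup}(f)=\rptr_V(g\circ f)$ you isolate is precisely the content of the equivalence, and your specialisation $W=U\otimes V$, $f=\id$ for the converse direction is the natural one; your observation that neither $\Sigma$-cyclicity nor finiteness is actually used is also correct.
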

 \begin{proof}
 The proof is straightforward and essentially follows the one in~\cite[Thm.\,3.3]{BBG}.
 \end{proof}

\section{$G$-categories}\label{sec:Gcat}
\begin{definition}
 Given a group $\group$ and $\CC$ a $\ok$-linear monoidal category. $\CC$ is then called a \textit{$\group$-graded category}, or simply a \textit{$\group$-category}, if there is a family of full subcategories $(\CC_x)_{x\in\group}$ such that the following holds:
 \begin{enumerate}
  \item $\mathbb{1}\in\CC_1$,
  \item $\CC_x\neq 0$ for all $x\in\group$ and we have a decomposition $\CC=\bigoplus_{x\in\group}\CC_x$,
  \item  for all $x,y\in\group$, and $V_x\in\CC_x$, $V_y\in\CC_y$, if $\CC(V_x,V_y)$ is non-zero then $x=y$,
  \item $\CC_x\otimes\CC_y\subset\CC_{xy}$.
 \end{enumerate}  
\end{definition}

An easy consequence of this definition is that in a $\group$-category duality and inverses in $\group$ are related as follows, if $V\in\CC_x$ then $V^*\in\CC_{x^{-1}}$. This makes the double dual functor a graded endofunctor, that is $V^{**}\in\CC_x$ for all $V\in\CC_x$. Hence, a pivotal structure on a rigid $\group$-category is a family of natural isomorphisms 
$$
\delta^x\colon\Id_{\CC_x}\rightarrow(-)^{**}_{\CC_x}
$$
 indexed by ${x\in\group}$ and satisfying the standard axioms. 
 
 \begin{definition}\label{def:modGcat}
 Let $\CC$ be a $\group$-category. A $\CC$-module category $\modcat$ is called $G$-graded if there is a family of subcategories $(\modcat_x)_{x\in\group}$ such that
 \begin{enumerate}
  
  \item $\modcat_x\neq 0$ for all $x\in\group$ and we have a decomposition $\modcat=\bigoplus_{x\in\group}\modcat_x$,
 
  \item for all $x,y\in\group$, $\modcat_x\otact\CC_y\subset\modcat_{xy}$.
 \end{enumerate}  
 \end{definition} 
 
An important example of $\group$-graded module categories is the projective ideal of $\CC$. Indeed, we  have a decomposition
 $$
 \proj(\CC)=\bigoplus_{x\in\group}\proj(\CC_x).
 $$
  Furthermore, if $\CC$ is finite, then it follows that so are $\group$ and $\CC_x$ for all $x\in\group$ . In this case a progective generator of $\CC$ must be of the form $\prog=\bigoplus_{x\in \group}\prog_x$, where $\prog_x$ is a projective generator of $\CC_x$.  
  
By a $\group$-endocategory we mean a pair $(\CC,\Sigma)$ where $\CC$ is $\group$-graded and $\Sigma$ is a graded endofunctor. According to Lemma~\ref{lem:red} a trace map on $\bigl(\proj(\CC),\Sigma\bigr)$ will be a module trace only if the equation \eqref{eq:tGG-tG} is satisfied. From the discussion above this reduces to checking similar equations involving the projective generators of each $\CC_x$. However, similar techniques can be used to prove a similar result under a weaker requirement than $\CC$ being finite. This result can be used in the case $\group$ is not finite, which will be important for  examples we consider below.    
\begin{lemma}[Graded reduction]\label{lem:gred}
Let $\CC$ be a $\group$-category, $\Sigma\in\End_\CC(\proj(\CC))$ graded and $t$ a trace map on $(\proj(\CC),\Sigma)$. Further, assume that $\CC_x$ is finite for all $x\in\CC$, and $\prog_x$ is a projective generator of $\CC_x$, then $t$ is a module trace if and only if  the equation holds:
\be\label{eq:gtGG}
t_{\prog_x\otimes\prog_y}(f)=t_{\prog_x}\bigl(\tr^\Sigma_{\prog_y}(f)\bigr)
\ee
for all $x,y\in\group$ and for all $f\colon\prog_x\otimes\prog_y\rightarrow\Sigma(\prog_x\otimes\prog_y)$.
\end{lemma}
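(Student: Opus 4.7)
My plan is to adapt the proof of Lemma~\ref{lem:red} to the $\group$-graded setting, using the grading to reduce every claim to individual homogeneous components $\CC_x$, which are finite by hypothesis. The forward implication is immediate: the right partial trace property~\eqref{eq:rpartial} applied to $P = \prog_x$ and $V = \prog_y$ is exactly~\eqref{eq:gtGG}.

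For the converse, suppose~\eqref{eq:gtGG} holds and fix $P \in \proj(\CC)$, $V \in \CC$, and $f \colon P\otimes V \to \Sigma(P\otimes V)$. The first step is to homogenize. Decomposing $P = \bigoplus_x P_x$ with $P_x \in \proj(\CC_x)$ and $V = \bigoplus_y V_y$ with $V_y \in \CC_y$, and using that $\Sigma$ is graded, the morphism $f$ splits as a direct sum of graded components $f_{x,y} \colon P_x \otimes V_y \to \Sigma(P_x \otimes V_y)$, since $\Hom$-spaces between objects of distinct degree vanish. As both sides of~\eqref{eq:rpartial} are $\ok$-linear in $f$, it suffices to prove the equation for each pair $(P_x,V_y)$ separately.

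Next I would rerun the two-step argument of Lemma~\ref{lem:red} inside the homogeneous pieces. \textbf{Step (i):} Assume $V = Q \in \proj(\CC_y)$ is also projective. Since $\CC_x$ and $\CC_y$ are finite, $\prog_x$ and $\prog_y$ are projective generators of their respective degrees, so $P$ (resp.\ $Q$) is a retract of a finite direct sum of copies of $\prog_x$ (resp.\ $\prog_y$), and the identity decompositions~\eqref{eq:idP-idP} hold within the graded subcategories. Inserting them into $t_{P\otimes Q}(f)$, transporting the retract maps $a_i, b_i, a'_j, b'_j$ across the diagram via $\Sigma$-cyclicity, invoking~\eqref{eq:gtGG} in place of~\eqref{eq:tGG-tG}, and closing the duality loops yields $t_{P \otimes Q}(f) = t_P\bigl(\tr^\Sigma_Q(f)\bigr)$ verbatim as in Lemma~\ref{lem:red}. \textbf{Step (ii):} For general $V \in \CC_y$, set $\tP := P \otimes V$, which lies in $\proj(\CC_{xy})$ because $\proj(\CC)$ is a right tensor ideal. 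The auxiliary morphisms $\A$ and $\B$ of Figure~\ref{fig:F} then live between $P \otimes P^* \in \proj(\CC_1)$ and $\tP \otimes \tP^* \in \proj(\CC_1)$, both projective of homogeneous degree $1$, so Step~(i) is available for them. The closing chain of equalities in the proof of Lemma~\ref{lem:red} transfers without modification and delivers $t_{P \otimes V}(f) = t_P\bigl(\tr^\Sigma_V(f)\bigr)$.

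The only real obstacle is the very first reduction: a $\group$-graded category with infinite $\group$ need not admit a single projective generator, so Lemma~\ref{lem:red} cannot be invoked directly. The graded decomposition of $f$ circumvents this by localizing everything to a single pair of degrees $(x,y)$ where a generator does exist, after which the original argument of Lemma~\ref{lem:red} carries over unchanged.
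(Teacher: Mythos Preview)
Your proposal is correct and follows essentially the same approach as the paper's own proof, which is only sketched: reduce to homogeneous objects via the grading, handle the case $V\in\proj(\CC_y)$ by decomposing identities through $\prog_x,\prog_y$ exactly as in Lemma~\ref{lem:red}, and then pass to arbitrary $V\in\CC_y$ via the $\A,\B$ trick of Figure~\ref{fig:F}. One small remark on your phrasing in Step~(ii): what makes Step~(i) applicable is not that $P\otimes P^*$ and $\tP\otimes\tP^*$ sit in $\proj(\CC_1)$, but rather that the second tensor factors $P^*\in\proj(\CC_{x^{-1}})$ and $\tP^*\in\proj(\CC_{(xy)^{-1}})$ are themselves homogeneous projectives, so the partial trace identity from Step~(i) can be invoked for the pairs $(P,P^*)$ and $(\tP,\tP^*)$.
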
   
\begin{proof}
The proof is similar to the one used in \cite{Phu} for the case $\Sigma=\Id$. It is similar to the proof of Lemma~\ref{lem:red}, and is hence only sketched.
 It is clear that \eqref{eq:gtGG} is a necessary condition. It remains to show that it is sufficient.
 We begin  by remarking that any map between objects in $\CC$ can be seen as a sum of homogeneous maps, and hence it suffices to prove the statement for homogeneous objects.
   We check \eqref{eq:rpartial} for $V\in\proj(\CC_y),P\in\proj(\CC_x)$. As before we use decompositions of $\id_V,\id_P$ via $\prog_y,\prog_x$ respectively, to get the desired result.
 To conclude \eqref{eq:rpartial} for non projectives of $\CC_y$ we use a similar trick as the one used in the proof of Lemma~\ref{lem:red} using the maps in Figure~\ref{fig:F}, now with $V\in\CC_y$.
\end{proof}

To finalise this section, we remark that a module trace on a $\group$-module category is a module trace on the underlying module category. Hence, similar statements to the ones shown in  Section~\ref{sec:endoc} hold here as well. In particular, the pull-back construction in Section~\ref{sec:pullback} extends to $\group$-module categories, and Theorem~\ref{thm:compdual} holds for finite type $\group$-categories as well.

\section{Hopf group-coalgebras}\label{sec:HopfGC}

It is known that finite dimensional Hopf algebras provide finite tensor categories . In this section we introduce Hopf $\group$-coalgebras as the corresponding algebraic object to the theory of $\group$-categories. A thorough account of the theory of Hopf group-coalgebras was done in~\cite{HopfGC}. 

\begin{definition}
	Let $\group$ be a group. \textit{A Hopf $\group$-coalgebra $H$} is a family of algebras 
	$$
	H = \{(H_x,m_x,1_x)\}_{x\in\group}
	$$
	 together with the family of algebra maps 
	$$
	\Delta_{x,y}\colon H_{xy}\rightarrow H_x\otimes H_y\gc \qquad \varepsilon\colon H_1\rightarrow \ok
	$$ 
	and linear maps 
	$${S_x\colon H_x\rightarrow H_{x^{-1}}}$$
	 such that the following holds
	\begin{description}
	 \item[Coassociativity] for all $x,y,z\in\group$:
	 $$
	 (\Delta_{x,y}\otimes\id_{H_z})\circ\Delta_{xy,z}=(\id_{H_x}\otimes\Delta_{y,z})\circ\Delta_{x,yz} 
	 $$
	 \item[Counitality] for all $x\in\group$: 
	 $$
	 (\id_{H_x}\otimes\varepsilon)\circ\Delta_{x,1}=\id_{H_x}=(\varepsilon\otimes\id_{H_x})\circ\Delta_{1,x}
	 $$
	 \item[Antipode law] for all $x\in\group$:
	  $$
	  m_x\circ(S_{x^{-1}}\otimes\id_{H_x})\circ\Delta_{x^{-1},x}= 1_x\cdot \varepsilon=m_x\circ(\id_{H_x}\otimes S_{x^{-1}})\circ\Delta_{x,x^{-1}}\gp
	  $$
	\end{description}
	We say that a Hopf $\group$-coalgebra is of \textit{finite type} if  $H_x$ is finite dimensional for all $x\in\group$.
\end{definition}  
Many results from the theory of Hopf algebras have an analog in this setting, see~\cite{HopfGC}. In particular,
\begin{proposition}\label{prop:antialg}
Let $H$ be a Hopf $\group$-coalgebra.
\begin{enumerate}
\item For $x\in\group$, the map $S_x$ is an antialgebra map,
\item For $x,y\in\group$,  the equality $\Delta_{y^{-1},x^{-1}}\circ S_{xy}=\tau_{H_{x^{-1}},H_{y^{-1}}}\circ (S_x\otimes S_y)\circ\Delta_{x,y}$ holds, where~$\tau$ is the flip map in $\vect$.

\item $\varepsilon\circ S_1=\varepsilon$.
\end{enumerate}
\end{proposition}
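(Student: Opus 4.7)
My plan is to adapt the classical Hopf-algebra proofs of the three antipode properties to the $\group$-graded setting. The statements are the direct $\group$-graded analogues of $S$ being an anti-algebra map, anti-coalgebra map, and of $\varepsilon\circ S=\varepsilon$. The only new ingredient compared with the ordinary Hopf-algebra case is the bookkeeping of the group grading, which is controlled by the fact that each $\Delta_{x,y}$ is an algebra map and that the antipode axiom is formulated on $H_1$.

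I would start with (3), which falls out immediately from the antipode axiom. For $h\in H_1$, apply $\varepsilon$ to the identity $\sum S_1(h_{(1)})h_{(2)}=\varepsilon(h)\,1_1$: since $\varepsilon\colon H_1\to\ok$ is an algebra morphism, one gets $\sum \varepsilon(S_1(h_{(1)}))\varepsilon(h_{(2)})=\varepsilon(h)$, and the counit axiom for $H_1$ collapses the left-hand side to $\varepsilon(S_1(h))$, i.e.\ the convolution identity $(\varepsilon\circ S_1)\ast\varepsilon=\varepsilon$ in $\Hom_{\ok}(H_1,\ok)$ with $\varepsilon$ as convolution unit.

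For (1) I would run the classical convolution-inverse argument, adapted. Consider $m_x\colon H_x\otimes H_x\to H_x$ together with the two candidate maps $\alpha(a\otimes b):=S_x(ab)$ and $\beta(a\otimes b):=S_x(b)\,S_x(a)$, both $H_x\otimes H_x\to H_{x^{-1}}$. The plan is to introduce an appropriate convolution product on the relevant $\Hom$-spaces, using $\Delta_{x,1}$ and the algebra-map identity $\Delta_{x,1}(ab)=\Delta_{x,1}(a)\,\Delta_{x,1}(b)$, together with the antipode law on $H_1$; with this product $m_x$ acquires a two-sided convolution inverse. Both $\alpha$ and $\beta$ will then be shown to be such an inverse, and uniqueness of two-sided convolution inverses forces $\alpha=\beta$. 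The unit-preservation $S_x(1_x)=1_{x^{-1}}$ is immediate from the antipode axiom applied to $h=1_1$, using $\Delta_{x^{-1},x}(1_1)=1_{x^{-1}}\otimes 1_x$.

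For (2) I would dualize the argument of (1): view $\Delta_{x,y}\colon H_{xy}\to H_x\otimes H_y$ as the coalgebra analogue of $m_{xy}$, and show that both $\Delta_{y^{-1},x^{-1}}\circ S_{xy}$ and $\tau_{H_{x^{-1}},H_{y^{-1}}}\circ(S_x\otimes S_y)\circ\Delta_{x,y}$ are convolution inverses of $\Delta_{x,y}$ in a convolution structure on maps $H_{xy}\to H_{x^{-1}}\otimes H_{y^{-1}}$ built from coassociativity and the antipode axiom on $H_1$; uniqueness finishes the argument. The main obstacle I anticipate is precisely the setting-up of the right convolution algebras for (1) and (2): in the ordinary Hopf case $H\otimes H$ is automatically a coalgebra via $\Delta\otimes\Delta$, whereas here $H_x\otimes H_x$ is not a coalgebra when $x\ne 1$, so one must instead use the available $\Delta_{x,1}$ and $\Delta_{1,x}$ and carefully track which grading component each intermediate map lands in. Once this bookkeeping is fixed, all three proofs reduce to routine transcriptions of the classical Hopf-algebra arguments.
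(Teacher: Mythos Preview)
The paper does not give its own proof of this proposition; it simply records it as a known fact, referring the reader to Virelizier's paper \cite{HopfGC} (``Many results from the theory of Hopf algebras have an analog in this setting, see~\cite{HopfGC}. In particular,\dots''). So there is no argument in the paper to compare against.

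Your sketch is the standard transcription of the classical convolution proofs to the $\group$-graded setting, which is exactly how Virelizier proceeds. Your identification of the one nontrivial point is accurate: for $x\neq 1$ the domain $H_x\otimes H_x$ is not a coalgebra, so one cannot literally form the convolution algebra $\Hom(H_x\otimes H_x,H_{x^{-1}})$. The fix is to use iterated coproducts landing partly in $H_1$ (e.g.\ $\Delta_{x,x^{-1},x}$ on $H_x$, via coassociativity) together with the antipode identity $S_x(h_{(1,x)})\,h_{(2,x^{-1})}=\varepsilon(h)\,1_{x^{-1}}$ for $h\in H_1$ (the antipode law with $x$ replaced by $x^{-1}$), so that all products one needs to form live in a single $H_{x^{-1}}$. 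With that bookkeeping in place, both your candidate maps $\alpha$ and $\beta$ are two-sided inverses of $m_x$ under the resulting associative operation, and your argument for (1) goes through; (2) is the dual computation, and (3) is exactly as you wrote.
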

It is easy to see that if $H$ is a Hopf $\group$-coalgebra then $H_1$ is a Hopf algebra and all $H_x$ are $H_1$-comodules.

\begin{notation}[Sweedler]
In the spirit of Sweedler's notation for Hopf algebras we write $\Delta_{x,y}(a)=a_{(1,x)}\otimes a_{(2,y)}$.
\end{notation}

\begin{example}\label{ex:quotHGC}
\mbox{}
\begin{enumerate}
\item Any Hopf algebra is a Hopf $G$-coalgebra for the trivial group $G=\langle1\rangle$. 

\item
Let $H$ be a Hopf algebra, and a Hopf subalgebra $C\subset Z(H)$. Consider the group of characters $\group=Alg(C,\ok)$. For $f\in G$, define  the projection algebra map
$$
p_f\colon\; H\twoheadrightarrow H\big/\bigl(z-f(z)1\bigr)_{z\in C}=:H_f\gp
$$
We can then introduce the coproduct maps 
\begin{align}
&\Delta_{f_1,f_2}\colon\; H_{f_1 f_2} \to H_{f_1}\tensor H_{f_2}\gc \quad f_1,f_2\in G\gc\notag\\
&\Delta_{f_1,f_2}\bigl(p_{f_1f_2}(a)\bigr):=p_{f_1}(a_{(1)})\otimes p_{f_2}(a_{(2)})\label{eq:Delta-C}
\end{align}
 and antipode maps
 \be\label{eq:S-C}
 S_{f_1}\bigl(p_{f_1}(a)\bigr):=p_{f_1^{-1}}\bigl(S(a)\bigr)\gp
 \ee
 We first check that they  are well-defined.
  For $f_1,f_2\in\group$ and $z\in C$ we have  
\begin{align*}
	\Delta\bigl(z-f_1 f_2(z)1\bigr)&=z_{(1)}\otimes z_{(2)}-f_1\otimes f_2\bigl(z_{(1)}\otimes z_{(2)}\bigr)1\otimes 1\\
	&= \bigl(z_{(1)}-f_1(z_{(1)})1\bigr)\otimes z_{(2)}+z_{(2)}\otimes(z_{(2)}-f_2(z_{(2)})1)
	\intertext{and also }
	S\bigl(z-f_1(z)1\bigr)&=S(z)-f_1(z)1\\
	&= S(z)-f_1^{-1}\bigl(S(z)\bigr)1
\end{align*}  
and hence the maps~\eqref{eq:Delta-C} and ~\eqref{eq:S-C} are indeed well-defined. Notice also that $\varepsilon\in\group$ is the unit and $\varepsilon(H_\varepsilon)=0$, whence $\varepsilon $ descends to a map on $H_\varepsilon$. The family $\{H_f\}_{f\in\group}$ with the maps defined above is a Hopf $\group$-coalgebra, since the Hopf $\group$-coalgebra axioms follow from the Hopf algebra axioms for $H$ projected onto the quotients.
\end{enumerate}
\end{example}

A module over a Hopf $\group$-coalgebra $H$ is a vector space $V=\bigoplus_{i=1}^nV_i$ 
 where, for some $x_i\in\group$, $H_{x_i}$ acts on $V_i$. The category $\Hmod$ of modules over $H$ is then $G$-graded, with $(\Hmod)_x=H_x\text{-mod}$. Furthermore, if $H$ is of finite type then $\Hmod$ is finite in each degree, i.e.\ $(\Hmod)_x$ is a finite category for each $x\in G$.
 
 \subsection{Pivotal structure}
 We call $\pivot=(\pivot_x)_{x\in\group}\in\prod_{x\in\group}H_x$ a \textit{$\group$-group-like} element if 
 $$
 \Delta_{x,y}(\pivot_{xy})=\pivot_x\otimes\pivot_y \qquad \text{and} \qquad \varepsilon(\pivot_1)=1\gp
 $$
  As in the Hopf algebra case, the set of $\group$-group-like elements has the structure of a group with 
  $\pivot^{-1}_x=  S_{x^{-1}}(\pivot_{x^{-1}})$.

 \begin{definition} 
 A \textit{pivotal Hopf $\group$-coalgebra} is a pair $(H,\pivot)$, where $H$ is a Hopf $\group$-coalgebra and $\pivot$ a $\group$-group-like element such that $S_{x^{-1}}S_x(a)=\pivot_x\cdot a\cdot\pivot_x^{-1}$.
 \end{definition}
 
 \begin{example}
 If $H$ is a pivotal Hopf algebra the constuction from Example~\ref{ex:quotHGC} (2) yields a pivotal Hopf $\group$-coalgebra.
 \end{example}
 Modules over a pivotal Hopf $\group$-coalgebra form a pivotal category. Indeed, let $V\in \Hxmod{x}$, then we can endow the vector space $V^*=\Hom_\ok(V,\ok)$ with the structure of $H_{x^{-1}}$-module via the action $(h\cdot f)(a)=f\bigl(S_{x^{-1}}(h)a\bigr)$ for $f\in V^*, h\in H_{x^{-1}}, a\in H_x$. In this case the evaluation and coevaluation maps witnessing left and right duality are given by 
 
 \be\label{eq:leftevs}
 \ev_V(f\otimes a)=f(a)\gc\qquad \coev_V(1)=\sum_i v_i\otimes v_i^*\
 \ee
 \be\label{eq:rightevs}
 \widetilde{\ev}_V(a\otimes f)=f(\pivot_xa)\gc \qquad  \widetilde{\coev}_V(1)=\sum_iv_i^*\otimes\pivot_x^{-1}v_i 
 \ee
  where 
    $V\in \Hxmod{x}$, $a\in V$, $f\in V^*$, and $\{v_i\}$ is a basis of $V$.

 \subsection{Module endofunctors}
 We note that if $H$ is a Hopf $\group$-coalgebra then $H_1$ coacts on $H_x$ via $\Delta_{x,1}$ making it a  right $H_1$-comodule-algebra, that is an algebra with a multiplicative $H_1$-comodule structure.  
 For $a\in H_x$ and $\alpha\in H_1^* $, we define the right hook action 
 \begin{align}\label{eq:rhook}
  &R(\alpha)\colon\; H_x \to H_x\gc\qquad x\in G\gc\notag\\
 &R(\alpha)(a):=a\leftharpoonup\alpha=\alpha(a_{(1,1)})a_{(2,x)}\gp
 \end{align}
  Clearly, $R(\alpha)$ is a right comodule map but we can say more. 
 
 \begin{proposition}\label{prop:hookcomodule}
 Let $\{H_x\}_{x\in\group}$ be a Hopf $\group$-coalgebra. A family of maps $\varphi=(\varphi_x)_{x\in\group}\in\prod_{x\in\group}\End_\ok(H_x)$ satisfies the relation
 \be\label{eq:defralpha}
  \Delta_{x,y}\circ\varphi_{xy}=(\varphi_x\otimes\id_y)\circ\Delta_{x,y}\qcq\forall x,y\in\group
 \ee  
 if and only if there exists $\alpha\in H_1^*$ such that $\varphi_x=R(\alpha)$ for all $x\in\group$.
 Furthermore, the space of such families of maps carries an algebra structure isomorphic to $(H_1^*)^\mathrm{op}$.
 \end{proposition}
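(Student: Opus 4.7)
The plan is to prove the three assertions in order: the ``if'' direction, the converse, and the algebra structure. The functional $\alpha$ will be recovered from a given family $\varphi$ as $\alpha := \varepsilon \circ \varphi_1$, and the algebra structure identification rests on a single application of coassociativity applied to the right triple of group elements.

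For the first implication, I would fix $\alpha \in H_1^*$, set $\varphi_x := R(\alpha)$ on each $H_x$, and verify \eqref{eq:defralpha} directly. Expanding the left-hand side for $a \in H_{xy}$ gives $\alpha(a_{(1,1)})\,\Delta_{x,y}(a_{(2,xy)})$. Coassociativity applied to the triple $(1,x,y)$ states
\begin{equation*}
(\id_{H_1} \otimes \Delta_{x,y}) \circ \Delta_{1,xy} \;=\; (\Delta_{1,x} \otimes \id_{H_y}) \circ \Delta_{x,y},
\end{equation*}
and rewriting the right-hand side of the previous expression using this identity converts it into $\bigl(R(\alpha) \otimes \id_{H_y}\bigr)\circ \Delta_{x,y}(a)$, which is exactly what \eqref{eq:defralpha} demands.

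For the converse, starting from a family $\varphi$ satisfying \eqref{eq:defralpha}, I would set $\alpha := \varepsilon \circ \varphi_1 \in H_1^*$ and show that $\varphi_x = R(\alpha)$ for every $x \in \group$. Specialising \eqref{eq:defralpha} to the pair $(1,x)$ yields $\Delta_{1,x} \circ \varphi_x = (\varphi_1 \otimes \id_{H_x}) \circ \Delta_{1,x}$. Composing both sides with $\varepsilon \otimes \id_{H_x}$ and using counitality $(\varepsilon \otimes \id_{H_x}) \circ \Delta_{1,x} = \id_{H_x}$ collapses the left-hand side to $\varphi_x$, while the right-hand side becomes $(\alpha \otimes \id_{H_x}) \circ \Delta_{1,x} = R(\alpha)$. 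Uniqueness of $\alpha$ follows from the same counitality argument applied at $x=1$.

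Finally, for the algebra structure, the space of such families inherits pointwise composition $(\varphi \psi)_x := \varphi_x \circ \psi_x$, and I would show that the bijection $\alpha \mapsto \bigl(R(\alpha)\bigr)_{x \in \group}$ intertwines this product with the opposite convolution product on $H_1^*$. A direct computation using coassociativity applied via the triple $(1,1,x)$ on $H_x$, namely $(\Delta_{1,1} \otimes \id_{H_x}) \circ \Delta_{1,x} = (\id_{H_1} \otimes \Delta_{1,x}) \circ \Delta_{1,x}$, yields $R(\alpha) \circ R(\beta) = R(\beta \ast \alpha)$ for all $\alpha, \beta \in H_1^*$, where $\ast$ denotes the convolution product induced by the Hopf algebra structure on $H_1$. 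Combined with the previous two steps, this gives the desired algebra isomorphism with $(H_1^*)^{\mathrm{op}}$. The main obstacle throughout is purely bookkeeping: keeping Sweedler-style subscripts straight across the graded coproducts $\Delta_{x,y}$ through these applications of coassociativity, but no deeper idea is needed.
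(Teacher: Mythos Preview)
Your proposal is correct and follows essentially the same route as the paper: the paper also verifies the ``if'' direction by a direct calculation, recovers $\alpha$ as $\varepsilon\circ\varphi_1$ by specialising \eqref{eq:defralpha} to the pair $(1,y)$ and applying $\varepsilon\otimes\id$, and concludes with the identity $R(\alpha)\circ R(\beta)=R(\beta\alpha)$. Your write-up simply makes the uses of coassociativity (at the triples $(1,x,y)$ and $(1,1,x)$) explicit, which the paper leaves implicit.
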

 \begin{proof}
  A straightforward calculation shows that $R(\alpha)$ satisfies the relation \eqref{eq:defralpha}. 
 Assume now that $\varphi$ satisfies this relation. In particular, $\Delta_{1,y}\circ\varphi_y=(\varphi_1\otimes\id_y)\Delta_{1,y}$. Applying $\varepsilon\otimes\id_y$ on both sides yields $\varphi_y=(\varepsilon\circ\varphi_1\otimes\id_y)\Delta_{1,y}=R(\varepsilon\circ\varphi_1)$.
 
 To finish notice that $R(\alpha)\circ R(\beta)=R(\beta\alpha)$ for all $\alpha,\beta\in H_1^*$.
 \end{proof}
 To see the relevance of  this statement in building right module endofunctors on the category $\Hmod$ we first need some preparations.
 
 Given an algebra $A$ and an algebra endomorphism $\varphi$, we define the {\em twist functor} 
\begin{align}\label{defeq:twistftor}
\varphi_*\colon \; &\Amod\rightarrow \Amod\notag\\
& V \mapsto {}_\varphi V
\end{align}
where ${}_\varphi V$ is the module structure on $V$ twisted by $\varphi$, i.e.\  the underlying vector space is $V$ and the action is $a\cdot_\varphi v=\varphi(a)\cdot v$. On morphisms, the functor $\varphi_*$ is identity.
It is straightforward to see that $\varphi_*$ is an additive and exact functor, i.e.\ it respects direct sums and exact sequences.

In the $\group$-graded case we are interested in functors that are twists degreewise. That is, for all $x\in\group$ we have an algebra endomorphism $\varphi_x\colon H_x\rightarrow H_x$ and for $V\in \Hxmod{x}$ the image under the functor is given by $(\varphi_x)_*(V)$. We have the following characterization.

\begin{proposition}\label{prop:mod-twisthook}
Let $H$ be a Hopf $\group$-coalgebra and for every $x\in\group$ let $\varphi_x\in Alg(H_x,H_x)$. The functor given by $(\varphi_x)_*\colon\Hxmod{x}\rightarrow \Hxmod{x}$ is a module endofunctor if and only if $\Delta_{x,y}\circ\varphi_{xy}= (\varphi_x\otimes\id_y)\circ\Delta_{xy} $ holds. In particular, in this case there exists  $\nu\in H_1^*$ group-like such that $\varphi_x=R(\nu)$ 
 for all $x\in\group$.
\end{proposition}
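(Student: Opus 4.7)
The plan is to proceed in three steps: establish the stated equivalence with the coproduct-compatibility relation, invoke Proposition~\ref{prop:hookcomodule} to identify $\varphi_x$ with a right hook action, and finally verify that the resulting functional $\nu\in H_1^*$ is group-like.

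For the equivalence, I would observe that the twist functor $(\varphi_x)_*$ acts as the identity on underlying vector spaces and on morphisms, so the natural candidate for the module-structure isomorphism
\[
 \sigma_{V,W}\colon (\varphi_{xy})_*(V\otimes W)\longrightarrow (\varphi_x)_*(V)\otimes W
\]
is the identity linear map. Its $H_{xy}$-linearity amounts to the equality of the two induced actions of $h\in H_{xy}$ on $V\otimes W$: on the source, $h$ acts as $\Delta_{x,y}(\varphi_{xy}(h))$; on the target, as $(\varphi_x\otimes\id_{H_y})\bigl(\Delta_{x,y}(h)\bigr)$. Letting $V$ and $W$ range over regular representations forces agreement of these two elements in $H_x\otimes H_y$ for every $h$, which is precisely the relation $\Delta_{x,y}\circ\varphi_{xy}=(\varphi_x\otimes\id_y)\circ\Delta_{x,y}$. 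Conversely, once this relation holds, the identity choice of $\sigma$ is easily checked to satisfy the associativity/unitality coherence axioms of a module functor.

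Given the compatibility relation, Proposition~\ref{prop:hookcomodule} immediately yields $\nu\in H_1^*$ such that $\varphi_x=R(\nu)$ for every $x\in\group$. The remaining task is to verify that $\nu\colon H_1\to \ok$ is an algebra map, i.e.\ group-like in $H_1^*$. The essential new input here, beyond what Proposition~\ref{prop:hookcomodule} already uses, is that each $\varphi_x$ is an \emph{algebra} endomorphism rather than merely a comodule map. Specialising to $x=1$ and expanding both sides of $\varphi_1(ab)=\varphi_1(a)\varphi_1(b)$ with $\varphi_1=R(\nu)$ and the fact that $\Delta_{1,1}$ is multiplicative produces
\[
 \nu(a_{(1)}b_{(1)})\,a_{(2)}b_{(2)} \;=\; \nu(a_{(1)})\,\nu(b_{(1)})\,a_{(2)}b_{(2)}\gp
\]
Applying the counit $\varepsilon$ of $H_1$ to both sides and invoking counitality collapses this to $\nu(ab)=\nu(a)\nu(b)$, while $\nu(1)=1$ follows from $\varphi_x(1_x)=1_x$ together with $\Delta_{1,x}(1_x)=1_1\otimes 1_x$.

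The main potential obstacle is the reverse direction of the equivalence: in principle one could consider module-functor structures $\sigma$ different from the identity, in which case one has to argue that the compatibility relation is still forced. A standard naturality argument on regular representations identifies $\sigma_{H_x,H_y}$ with left multiplication by some invertible element $c\in H_x\otimes H_y$, and the $H_{xy}$-linearity constraint then recovers (a twisted form of) the same relation; but for this paper's purposes the natural convention $\sigma=\id$ on underlying vector spaces is both the evident one and sufficient for the statement, so I would present the proof in that form.
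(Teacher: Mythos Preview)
Your proposal is correct and follows essentially the same approach as the paper: test the module condition on regular representations to obtain the compatibility relation, then invoke Proposition~\ref{prop:hookcomodule}. The only minor difference is in the group-likeness argument: the paper simply observes that $\nu=\varepsilon\circ\varphi_1$ is a composition of algebra maps and hence itself an algebra map, whereas you expand the multiplicativity of $\varphi_1=R(\nu)$ and apply $\varepsilon$; these are the same idea, yours just unpacks it. Your treatment is also more explicit than the paper's about the converse direction and about the tacit choice $\sigma=\id$, which the paper leaves implicit.
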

\begin{proof}
Applying the module condition to the tensor product of regular representations $H_x\otimes H_y$ we get that the action of $H_{xy}$ on $(\varphi_{xy})_*(H_x\otimes H_y)$, given by $\Delta_{x,y}\circ \varphi_{xy}$, must agree with the action on $(\varphi_x)_*(H_x)\otimes H_y$, given by $(\varphi_x\otimes\id_y)\Delta_{x,y}$.
The last statement follows from Proposition~\ref{prop:hookcomodule}, where $\nu=\varepsilon\circ\varphi_1$ and hence group-like.
\end{proof}
 
 \begin{remark}\label{rem:cophook}\mbox{}
 \begin{enumerate}
 \item
 For a Hopf $\group$-coalgebra $H$ and $\nu\in H_1^*$, we can define similarly the left hook action  as 
 \begin{align}
 &L(\nu)\colon \; H_x \to H_x\gc \notag\\
 &L(\nu)(a):=\nu \rightharpoonup a = (\id_{H_x}\otimes \nu)\circ \Delta_{x,1}(a)\gp
 \end{align}
  Notice that this is nothing but the right hook action on the Hopf $\group^{\mathrm{op}}$-coalgebra $H^{\mathrm{cop}}$. Hence, similar results apply to $L(\nu)$.
 
 \item  We  also note that if $\alpha\in H_1^{*}$ is group-like it then provides a 1-dimensional invertible object $X_\alpha$ in $\Hmod$. In this case, $R(\alpha)$ is an algebra automorphism, and  the corresponding twist endofunctor $R(\alpha)_*$ can be written as taking tensor product with $X_\alpha$ from the left, i.e.
 $$
 R(\alpha)_* = X_\alpha\tensor -
 $$
 Similarly, $L(\alpha)_*$ corresponds to tensoring with $X_\alpha$ from the right side.
 Tensoring with invertible objects provides an important class of endofunctors, recall also  Examples~\ref{sec:examples}~(1) above.
 \end{enumerate}
 \end{remark}

 \subsection{Integrals and modulus}
 Here, we review the theory of integrals and cointegrals of a Hopf $G$-coalgebra of finite type.
 
 \newcommand{\comodulus}{\boldsymbol{a}}
 \begin{definition}\label{def:coint}
 Let $H$ be a Hopf $\group$-coalgebra, i.e.\ $H_1$ is a Hopf algebra. A \textit{left cointegral} of $H$ is an element $\coint\in H_1$, such that
 $$
  a \coint =\varepsilon(a)\coint\gc \qquad a\in H_1\gp
 $$
 Right cointegrals are defined similarly using the right multiplication with $a\in H_1$.
 \end{definition}
 
 It is known, see~\cite[Sec.\,10]{Rad}, that if $H_1$ is finite-dimensional left and right cointegrals always exist and are unique up to multiplication by scalars.
  
  \begin{definition}\label{def:grint}
  Let $H$ be a Hopf $\group$-coalgebra. \textit{A right $\group$-integral} of $H$ is a family of linear forms $\rint=(\rint_x)\in\prod_{x\in\group} H_x^*$ such that
  \be
  (\rint_x\otimes\id_{H_y})\circ\Delta_{x,y}(-)=\rint_{xy}(-)1_y\gp
  \ee
  A left $\group$-integral is defined similarly.
  \end{definition}
  It was proven in~\cite{HopfGC} that the space of left/right $\group$-integrals   is 1-dimensional  if $H$ is of finite type.
 
 The spaces for left and right (co)integrals  do not necessarily  coincide. In fact, if $\coint$ is a left cointegral, there exists a group-like functional $\modulus\in H_1^*$  such that  
\be\label{eq:modulus}
\coint a=\modulus(a)\coint
\ee
 for all $a\in H_1$. That is to say, $\modulus$ is an obstruction for a left cointegral to be a right cointegral.

Similarly, for a right $G$-integral $\rint$ there exists $\comodulus=(\comodulus_x)\in\prod_{x\in\group}H_x$ which is $\group$-group-like and such that the equality
$$
(\id_{H_x}\otimes\rint_y)\circ\Delta_{x,y}(-)=\rint_{xy}(-)\comodulus_x
$$
holds
 for all $x,y\in\group$. In other words, $\comodulus$ can  be understood as an obstruction for right $\group$-integrals to be left ones.
 
  \begin{definition}\label{def:distinguished}
  The element $\modulus\in H_1^*$ is called the $H_1$-distinguished group-like element or the \textit{modulus} of $H_1$.
  \\
  The $\group$-group-like element $\comodulus$ is called the $H$-distinguished $\group$-group-like element or the \textit{comodulus} of $H$. 
  \end{definition}
  \begin{remark}\label{rem:copint}
  A left $\group$-integral of a Hopf $\group$-coalgebra $H$ is a  right $\group^{\mathrm{op}}$-integral of the Hopf $\group^{\mathrm{op}}$-coalgebra $H^{\mathrm{cop}}$.
  We also note that the modulus of $H$ and $H^{\mathrm{cop}}$ coincide.
  \end{remark}
  
  \begin{definition}\label{def:unimodular}
  We call a Hopf algebra $H_1$ {\it unimodular} if $\modulus=\varepsilon$. A Hopf $\group$-coalgebra is called unimodular if $H_1$ is so.
  \end{definition}
  An example of the importance of these group-like elements can be seen in the following Lemma proven in ~\cite{HopfGC}.
\begin{lemma}\label{lem:rads4}
Let $H$ be a finite type Hopf $\group$-coalgebra, $\comodulus$ its comodulus and $\modulus$ its modulus. Then 
\be
 (S_{x^{-1}}S_x)^2(h)=\comodulus_x\left(\modulus^{-1}(h_{(1,1)})h_{(2,x)}\modulus(h_{(3,1)})\right)\comodulus_{x}^{-1}
\ee 
\end{lemma}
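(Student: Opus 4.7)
The plan is to adapt Radford's classical $S^4$ formula for finite-dimensional Hopf algebras to the Hopf $\group$-coalgebra setting. Since $H_1$ is an ordinary Hopf algebra, the classical formula already gives $(S_1)^4(h)=\comodulus_1\bigl(\modulus^{-1}\rightharpoonup h\leftharpoonup \modulus\bigr)\comodulus_1^{-1}$ for $h\in H_1$, so the real content is controlling how the index-shifting antipodes $S_x\colon H_x\to H_{x^{-1}}$ interact with the $H_1$-comodule-algebra structure on $H_x$ coming from $\Delta_{x,1}$ and $\Delta_{1,x}$.

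First, I would introduce the candidate map
\[
\Phi_x\colon H_x\to H_x\gc\qquad \Phi_x(h):=\comodulus_x\bigl(\modulus^{-1}(h_{(1,1)})\,h_{(2,x)}\,\modulus(h_{(3,1)})\bigr)\comodulus_x^{-1}\gc
\]
where the iterated coproduct $\Delta_{1,x,1}\colon H_x\to H_1\tensor H_x\tensor H_1$ is well-defined by coassociativity. Using that $\modulus$ is a group-like in $H_1^*$ and $\comodulus_x$ is $\group$-group-like in $H_x$, together with Proposition~\ref{prop:antialg} and Proposition~\ref{prop:hookcomodule}, one checks that $\Phi_x$ is an algebra automorphism of $H_x$ and that it is multiplicative over the family in the sense $\Delta_{x,y}\circ\Phi_{xy}=(\Phi_x\tensor\Phi_y)\circ\Delta_{x,y}$.

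Second, I would derive the key integral-antipode identities. For the right $\group$-integral $\rint=(\rint_x)$, one shows by a standard argument (apply $\id\tensor \rint_{y}$ to $\Delta_{x,y}\circ m_{xy}$ and use the antipode law) the ``Frobenius'' type identities
\[
\rint_x\bigl(S_{x^{-1}}(h_{(1,x^{-1})})\,k\bigr)=\rint_x\bigl(h\, k_{(2,x)}\bigr)\comodulus_x^{-1}\cdots
\]
relating $S_{x^{-1}}$ to $\rint_x$, together with its twisted counterpart that feeds in the cointegral $\coint\in H_1$ and the modulus via~\eqref{eq:modulus}. Combining these two types of identities, applied once for $S_x$ and once for $S_{x^{-1}}$ (so the grading $x\to x^{-1}\to x$ returns home), produces the formula $(S_{x^{-1}}S_x)^2(h)=\Phi_x(h)$ evaluated against any element detecting the integral, and nondegeneracy of the integral pairing (for $H$ of finite type) upgrades this to an equality in $H_x$.

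The main obstacle will be the third step: carefully tracking the grading through the four applications of the antipode and the iterated coproduct $\Delta_{1,x,1}$. In the classical proof one may freely use coassociativity in a single Hopf algebra, but here each coproduct $\Delta_{a,b}$ carries a pair of group labels, and the integral relations mix $H_x^*$ with $H_{x^{-1}}^*$ through the antipode. The cleanest way to proceed is probably to reduce to the case $H_1$ by noting that both sides of the claimed equality are $H_1$-colinear morphisms $H_x\to H_x$ covering the classical Radford identity on $H_1$, and then invoke the uniqueness of integrals on $H$ of finite type (proven in~\cite{HopfGC}) to conclude that the two $H_1$-colinear lifts must coincide.
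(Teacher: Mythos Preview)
The paper does not prove this lemma: it is stated with attribution to Virelizier~\cite{HopfGC} (see the sentence immediately preceding the lemma), so there is no ``paper's own proof'' to compare against. Your sketch is an attempt to reproduce Virelizier's argument, and the integral-based strategy in your second and third paragraphs is indeed the right one: derive the twisted-symmetry identity for $\rint_x$ (this is exactly Theorem~\ref{prop:twistint} in the paper, which is Virelizier's Theorem~4.2), iterate it through two applications of $S_{x^{-1}}S_x$, and invoke nondegeneracy of the integral pairing on each finite-dimensional $H_x$.

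However, the alternative route you propose in the final paragraph has a genuine gap. You suggest that both $(S_{x^{-1}}S_x)^2$ and $\Phi_x$ are $H_1$-colinear maps $H_x\to H_x$ ``covering'' the classical Radford identity on $H_1$, and that uniqueness of integrals forces such lifts to coincide. But $H_1$-colinearity (say with respect to $\Delta_{x,1}$) does not rigidify a linear endomorphism of $H_x$: any right $H_1$-comodule endomorphism of $H_x$ is $H_1$-colinear, and there are many of these in general. The one-dimensionality of the space of $\group$-integrals is a statement about functionals satisfying the integral relation, not about comodule maps, so it cannot be invoked here to pin down the lift. If you pursue this, you should stick with the direct integral computation: carry the grading labels through the identity $\rint_x(ab)=\rint_x\bigl(S_{x^{-1}}S_x(b\leftharpoonup\modulus)\,a\bigr)$ twice (once on $H_x$, once on $H_{x^{-1}}$), collect the resulting $\modulus^{\pm1}$ and $\comodulus_x^{\pm1}$ factors, and conclude by nondegeneracy. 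The bookkeeping is tedious but entirely parallel to the classical case; no separate ``reduction to $H_1$'' step is needed or available.
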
 
\begin{remark}
In the Hopf algebra case, that is for the trivial group, this result recovers Radford's $S^4$ formula.
\end{remark}
 
 \subsection{$\modulus$-symmetrised $G$-integral} 
 We now recall the following statement proven in~\cite[Thm.\,4.2]{HopfGC}: 
 \begin{theorem}\label{prop:twistint}
  Let $H$ be a finite type Hopf $\group$-coalgebra. And $\rint=(\rint_x)_{x\in\group}$ a $\group$-integral then
  \be
   \rint_x(ab)=\rint_x\bigl(S_{x^{-1}}S_x(b\leftharpoonup \modulus)a\bigr)
  \ee
  where $\modulus$ is the modulus of $H_1$.
 \end{theorem}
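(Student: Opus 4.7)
This is the $\group$-graded analogue of Radford's identity for the Nakayama automorphism of the integral Frobenius form on a finite-dimensional Hopf algebra; I would adapt the classical proof while carefully tracking the grading. The first step is to establish that each $\rint_x$ makes $H_x$ into a Frobenius algebra, i.e.\ that the pairing $H_x \otimes H_x \to \ok$, $(a,b) \mapsto \rint_x(ab)$, is non-degenerate. A candidate inverse to $a \mapsto \rint_x(a\cdot -)$ is given by the Fourier-type construction built from a non-zero cointegral $\coint \in H_1$ and the coproduct $\Delta_{x, x^{-1}}\colon H_1 \to H_x \otimes H_{x^{-1}}$, and that the composition is a non-zero scalar multiple of the identity reduces to the $1$-dimensionality of the space of right $\group$-integrals established in~\cite{HopfGC}. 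Granted this, there is a unique algebra automorphism $\sigma_x \in \Aut(H_x)$ such that $\rint_x(ab) = \rint_x\bigl(\sigma_x(b)\,a\bigr)$ for all $a, b \in H_x$, and the theorem becomes the identification $\sigma_x(b) = S_{x^{-1}}S_x(b \leftharpoonup \modulus)$.

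To obtain this explicit form of $\sigma_x$, I would manipulate $\rint_x(ab)$ in three moves: (i) pass $b$ through the coproduct $\Delta_{1,x}$ or $\Delta_{x,1}$ and insert the antipode axiom $m_x \circ (S_{x^{-1}} \otimes \id_{H_x}) \circ \Delta_{x^{-1},x} = \varepsilon(-)\,1_x$ to introduce a controlled $S$-factor; (ii) rearrange using the right $\group$-integral relation $(\rint_x \otimes \id_y)\Delta_{x,y}(-) = \rint_{xy}(-)\,1_y$ together with its companion involving the comodulus $\comod$; (iii) invoke the relation $\coint\,a = \modulus(a)\coint$ defining the modulus on a cointegral to produce the hook action by $\modulus$. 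As a consistency check, the composite $\sigma_x^2$ should reproduce Lemma~\ref{lem:rads4}: conjugation by $\comod_x$ combined with the $(\modulus^{-1},\modulus)$-hook action.

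The main obstacle is not conceptual but the bookkeeping of the grading indices, since every $\Delta$, $S$, and multiplication carries $\group$-data that must match, and the antipode axiom is only available at the factorisation $1 = x\cdot x^{-1}$. Once the indices are correctly aligned, the algebraic manipulations closely follow the classical Hopf-algebraic argument, and the identity closes by uniqueness of the $\group$-integral.
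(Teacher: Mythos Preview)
The paper does not give its own proof of this statement: it is introduced with ``We now recall the following statement proven in~\cite[Thm.\,4.2]{HopfGC}'' and simply cites Virelizier's paper. So there is no in-paper argument to compare against.

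That said, your plan is the right one and matches in spirit what is done in the cited reference. The key structural input---non-degeneracy of each $\rint_x$, existence of a Nakayama-type automorphism $\sigma_x$, and its identification via manipulations with the antipode axiom, the $\group$-integral relation, and the defining relation~\eqref{eq:modulus} for the modulus---is exactly the $\group$-graded transcription of Radford's classical argument. Your consistency check against Lemma~\ref{lem:rads4} is also the correct sanity test. The only caveat is that what you have written is a plan rather than a proof: steps (i)--(iii) each hide several applications of coassociativity and Proposition~\ref{prop:antialg}, and the index bookkeeping you flag as the main obstacle is genuinely where errors creep in. If you want a self-contained proof you will need to write out the chain of equalities explicitly; otherwise, citing \cite[Thm.\,4.2]{HopfGC} as the paper does is entirely appropriate.
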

  Assume that $(H,\pivot)$ is a pivotal Hopf $\group$-coalgebra. Define the \textit{$\modulus$-symmetrised}    $\group$-integral by shifting the argument of the integral by pivots:
  \be
  \hat{\rint}_x(a):=\rint_x(\pivot_xa)\gp
  \ee
   Similarly to integrals and due to invertibility of pivots, $\hat{\rint}_x$ is uniquely characterised by the relation
  \be\label{eq:Rintrel}
   (\hat{\rint}_x\otimes\pivot_y)\circ\Delta_{x,y}(a)=\hat{\rint}_{xy}(a)1_y\gc \qquad a\in H_{xy}\gp
  \ee
 
 Then from Theorem~\ref{prop:twistint} one can easily check the following.
\begin{proposition}\label{prop:twistgint}
Let $H$ be a Hopf $\group$-coalgebra and $\modulus$ its modulus. The $\modulus$-symmetrised $\group$-integral $\hat{\rint}_x$ factors through $\HH\bigl(H_x,R(\modulus)\bigr)$. Explicitly,
\be
\hat{\rint}_x(ab)=\hat{\rint}_x\bigl(R(\modulus)(b)a\bigr)\qcq \forall x\in\group,\; a,b\in H_x\gp
\ee
\qed
 \end{proposition}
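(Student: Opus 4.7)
The plan is to obtain the claimed factorisation as a direct consequence of Theorem~\ref{prop:twistint}, combined with the defining pivotal relation $S_{x^{-1}}S_x(c)=\pivot_x\,c\,\pivot_x^{-1}$ that is available because $(H,\pivot)$ is pivotal. The key observation is that the $\modulus$-symmetrised $G$-integral is obtained from $\rint_x$ by shifting the argument by the pivot $\pivot_x$, and this shift is exactly what is needed to convert the antipode-squared twist in Theorem~\ref{prop:twistint} into the right-hook action $R(\modulus)$.

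Concretely, the computation proceeds as follows. Unfolding the definition gives
\begin{equation*}
\hat{\rint}_x(ab)=\rint_x(\pivot_x\, a\, b).
\end{equation*}
Next, applying Theorem~\ref{prop:twistint} with the substitution $a\mapsto \pivot_x a$ (and $b\mapsto b$) yields
\begin{equation*}
\rint_x\bigl((\pivot_x a)\,b\bigr)=\rint_x\!\left(S_{x^{-1}}S_x(b\leftharpoonup\modulus)\cdot\pivot_x a\right).
\end{equation*}
Now invoke the pivotal identity $S_{x^{-1}}S_x(c)=\pivot_x c\,\pivot_x^{-1}$ with $c=b\leftharpoonup\modulus=R(\modulus)(b)$ to rewrite the right-hand side as
\begin{equation*}
\rint_x\!\left(\pivot_x\,R(\modulus)(b)\,\pivot_x^{-1}\pivot_x\,a\right)=\rint_x\!\left(\pivot_x\,R(\modulus)(b)\,a\right),
\end{equation*}
and finally refold the definition of $\hat{\rint}_x$ to obtain $\hat{\rint}_x\bigl(R(\modulus)(b)\,a\bigr)$.

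There is really no serious obstacle here: the argument is a three-line identity chain using only the definition of $\hat{\rint}_x$, Theorem~\ref{prop:twistint}, and the pivotal relation. The only subtlety worth flagging is that one must verify that $R(\modulus)(b)$ lies in $H_x$ when $b\in H_x$, which is immediate from \eqref{eq:rhook} together with the fact that $b_{(1,1)}\in H_1$ and $b_{(2,x)}\in H_x$. All other steps are algebraic rearrangements in the single algebra $H_x$. The statement of the proposition only asserts a Hochschild-0 style identity (cyclicity up to the automorphism $R(\modulus)$), and our calculation shows this identity word-for-word.
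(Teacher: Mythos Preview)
Your proof is correct and is exactly the computation the paper has in mind: the paper states only that the proposition ``can easily be checked'' from Theorem~\ref{prop:twistint} and gives no further argument, and your three-line chain using the definition of $\hat{\rint}_x$, Theorem~\ref{prop:twistint}, and the pivotal relation $S_{x^{-1}}S_x(c)=\pivot_x c\,\pivot_x^{-1}$ is precisely that check. Your remark that the pivotal hypothesis is implicitly in force (since $\hat{\rint}_x$ is defined via $\pivot_x$) is also on point.
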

 
 We note that in the unimodular case the $\modulus$-symmetrised integral $\hat{\rint}$ becomes a symmetric linear form called symmetrised integral~\cite{BBG}, and this explains our terminology.
 
\begin{notation}\label{not:trivialmod}
Let $H_1$ be a Hopf algebra and $V$ a vector space. We write ${}_\varepsilon V$ for the trivial $H_1$-module structure on $V$. That is, $h\cdot v=\varepsilon(h)v$ .
\end{notation}
We finish this section by recalling a generalisation of the result in~\cite[Thm.\,5.1]{BBG}  concerning the structure of tensor products of the regular representation to the case of $G$-coalgebras. 
 
  \begin{figure}
 \centering
\begin{subfigure}{0.5\textwidth} 
\centering
  \scriptsize {\def\svgwidth{2.5cm}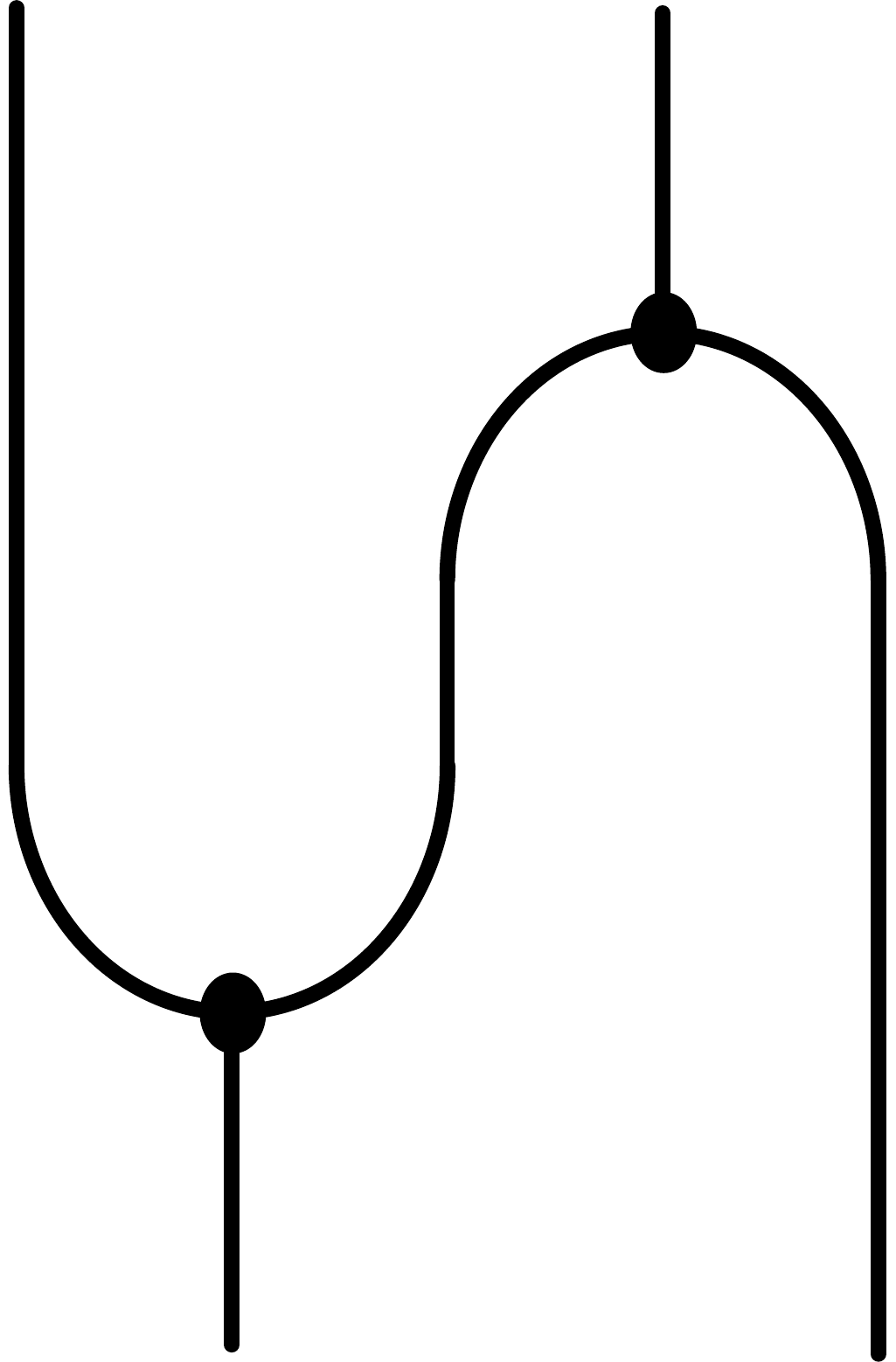}
  \caption{graphical representation of $\phi_{x,y}$}\label{fig:phi}
\end{subfigure}%
\begin{subfigure}{0.5\textwidth}
\centering
\scriptsize{\def\svgwidth{2.5cm}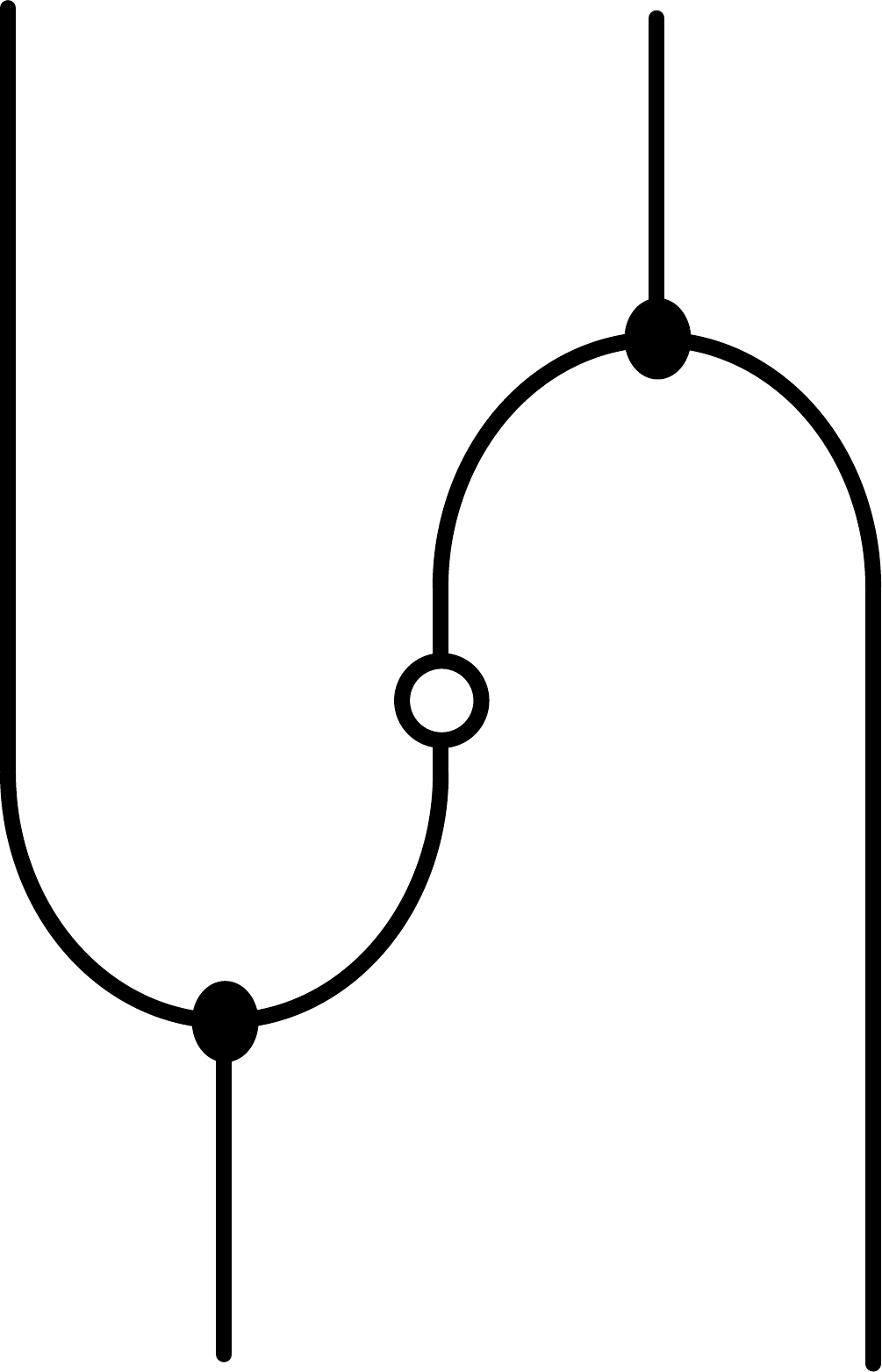}
  \caption{graphical representation of $\psi_{x,y}$}\label{fig:psi}
\end{subfigure} 
\caption{Maps from Theorem~\ref{thm:decoregular}}\label{fig:decoregular}
 \end{figure}
 
 \begin{theorem}[\cite{Phu}]\label{thm:decoregular}
Let $H=\{H_x\}$ be a Hopf $\group$-coalgebra.
 The maps 
 $$
  \phi_{x,y}\colon H_{xy}\otimes{}_\varepsilon H_y\rightarrow H_x\otimes H_y\qcq h\otimes m\mapsto h_{(1,x)}\otimes h_{(2,y)}m
 $$ 
 and
 $$
  \psi_{x,y}\colon H_x\otimes H_y\rightarrow H_{xy}\otimes {}_\varepsilon H_y\qcq a\otimes b\mapsto a_{(1,xy)}\otimes S_{y^{-1}}(a_{(2,y^{-1})})b 
 $$
 are $H_{xy}$-linear and inverse to each other.
 \end{theorem}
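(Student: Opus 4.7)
The plan is to verify the four required properties of $\phi_{x,y}$ and $\psi_{x,y}$: $H_{xy}$-linearity of each, and that they are mutually inverse. All four verifications are straightforward Sweedler-type manipulations using only two ingredients, namely the fact that each $\Delta_{u,v}$ is an algebra map, and coassociativity applied to split a diagonal in whichever way a calculation requires, together with the antipode law for $H_y$ applied to elements of the form $k \in H_1$.

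For $H_{xy}$-linearity of $\phi_{x,y}$, acting on $h \otimes m \in H_{xy}\otimes{}_\varepsilon H_y$ by $g \in H_{xy}$ gives $gh \otimes m$ (since the second tensorand is trivial via $\varepsilon$ on $H_1$), and applying $\phi_{x,y}$ and using that $\Delta_{x,y}$ is an algebra map gives $g_{(1,x)}h_{(1,x)} \otimes g_{(2,y)} h_{(2,y)} m$, which is exactly $g \cdot \phi_{x,y}(h\otimes m)$ for the product module structure on $H_x \otimes H_y$. For $\psi_{x,y}$ the calculation is more involved: expanding $\psi_{x,y}(g_{(1,x)}a \otimes g_{(2,y)}b)$ by first splitting the two factors in $H_{xy}\otimes H_{y^{-1}}$ via the algebra-map property, then using that $S_{y^{-1}}$ is an anti-algebra morphism (Proposition~\ref{prop:antialg}(1)) to isolate $S_{y^{-1}}((g_{(1,x)})_{(2,y^{-1})})$, and finally using coassociativity in the form $(\Delta_{xy,y^{-1}}\otimes\id)\circ\Delta_{x,y}=(\id\otimes\Delta_{y^{-1},y})\circ\Delta_{xy,1}$ to re-express the triple involving $g$, one obtains $S_{y^{-1}}((g_{(2,1)})_{(1,y^{-1})})(g_{(2,1)})_{(2,y)}$, which collapses via the antipode axiom to $\varepsilon(g_{(2,1)})1_y$ and then via counitality to reproduce $g \cdot \psi_{x,y}(a\otimes b)$.

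For $\phi_{x,y}\circ\psi_{x,y}=\id$, the composition on $a\otimes b$ is $(a_{(1,xy)})_{(1,x)}\otimes (a_{(1,xy)})_{(2,y)}S_{y^{-1}}(a_{(2,y^{-1})})b$. Applying the same coassociativity identity $(\Delta_{x,y}\otimes\id)\circ\Delta_{xy,y^{-1}}=(\id\otimes\Delta_{y,y^{-1}})\circ\Delta_{x,1}$ to the triple derived from $a$ turns this into $a_{(1,x)}\otimes (a_{(2,1)})_{(1,y)}S_{y^{-1}}((a_{(2,1)})_{(2,y^{-1})})b$; the antipode axiom applied to $a_{(2,1)}\in H_1$ collapses the second factor to $\varepsilon(a_{(2,1)})1_y\cdot b = b$ (after counit on the first tensorand), giving $a\otimes b$. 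The opposite composition $\psi_{x,y}\circ\phi_{x,y}$ on $h\otimes m$ runs through the same pair of identities (coassociativity and antipode) in the mirror order and reduces similarly to $h\otimes m$.

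The only nontrivial bookkeeping is in the $H_{xy}$-linearity of $\psi_{x,y}$ and in choosing the correct instance of coassociativity, because several triple coproducts with mixed grading labels have to line up; this is the main place where one must be careful with the grading indices on $\Delta_{u,v}$ and $S_{x}$. No finite-type hypothesis or pivotal structure is required for this theorem, and once the correct coassociativity identity has been identified each step is automatic, so no deeper obstacle is anticipated.
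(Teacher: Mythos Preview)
Your verification is correct: each of the four checks reduces to a single coassociativity identity followed by one application of the antipode law for elements of $H_1$, exactly as you describe, and you have identified the right instances in each case. Note that the paper does not actually supply a proof of this theorem---it is quoted from~\cite{Phu} (and is the $G$-graded analogue of~\cite[Thm.\,5.1]{BBG})---so there is no in-paper argument to compare against; your direct Sweedler computation is the standard proof and matches what one finds in those references.
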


\section{Traces on $\Hpmod$}\label{sec:main}

In this section, we study module traces on the category $\Hpmod$ of projective modules over a Hopf $\group$-coalgebra $H$. First, we describe a certain type of module endofunctors on $\Hpmod$ we  use later for our main theorem. Then, we identify the space of trace maps on this particular type of endocategories with the $0$-th Hochschild homology of $H$ relative to an automorphism of $H$. Finally, we prove our main theorem that in particular states conditions when such trace maps satisfy the partial trace property \eqref{eq:rpartial}, and hence provides module traces, and we finally express them in terms of the $G$-integral.

\subsection{Trace maps for twist functors}
Recall that given an endomorphism $\varphi$ of an algebra~$A$, we can define the corresponding  twist functor $\varphi_*$  on the category of $A$-modules, see~\eqref{defeq:twistftor}. 

\begin{lemma}\label{lem:twistproj}
For $\varphi\in\Aut(A)$, the twist functor $\varphi_*$  restricts to an endofunctor on the projective ideal $\Apmod$.
\end{lemma}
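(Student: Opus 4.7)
The plan is to reduce the claim to the single observation that $\varphi_*$ is an equivalence of the category $\Amod$ onto itself, with inverse $(\varphi^{-1})_*$; equivalences preserve projective objects, so the restriction to $\Apmod$ follows at once. Since $\varphi$ is invertible, the two composites $\varphi_*\circ(\varphi^{-1})_* = (\varphi\circ\varphi^{-1})_* = \Id$ and similarly on the other side hold literally, because the underlying vector space and the action on it only depend on the composite twist.

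More concretely, and in the style of the rest of the paper, I would argue as follows. First I would check the statement on the rank-one free module: the $\ok$-linear map
\begin{equation*}
\varphi \colon \; A \longrightarrow {}_\varphi A, \qquad b\mapsto \varphi(b),
\end{equation*}
is $A$-linear, since $\varphi(ab) = \varphi(a)\varphi(b) = a\cdot_\varphi \varphi(b)$, where in the last step I use the defining identity $a\cdot_\varphi v=\varphi(a)\cdot v$ of the twisted action. Because $\varphi\in\Aut(A)$, this is a vector-space isomorphism, and hence an isomorphism of $A$-modules ${}_\varphi A\cong A$.

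Next, for an arbitrary $P\in\Apmod$, pick $Q\in\Amod$ and $n\in\oN$ with $P\oplus Q\cong A^{\oplus n}$. The functor $\varphi_*$ is the identity on the underlying vector spaces and on morphisms, so it is additive and exact; applying it to the splitting gives
\begin{equation*}
{}_\varphi P \oplus {}_\varphi Q \; \cong \; {}_\varphi\!\bigl(A^{\oplus n}\bigr) \; \cong \; \bigl({}_\varphi A\bigr)^{\oplus n} \; \cong \; A^{\oplus n}.
\end{equation*}
Thus ${}_\varphi P$ is a direct summand of a free $A$-module, hence projective, so $\varphi_*(\Apmod)\subseteq \Apmod$ as required.

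There is no serious obstacle here: the only computation is the one-line verification that $\varphi$ itself is $A$-linear as a map $A\to{}_\varphi A$, and the rest is formal. The proof uses invertibility of $\varphi$ in an essential way (a non-invertible algebra endomorphism need not preserve projectives), which is why the lemma is stated for $\Aut(A)$ rather than for arbitrary algebra endomorphisms.
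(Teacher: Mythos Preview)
Your proof is correct and follows essentially the same approach as the paper: show that the twist of the regular module is free, then use that projectives are direct summands of free modules. The paper first reduces to indecomposable projectives (so that the complement can be taken inside the rank-one free module $A$) and argues that $\varphi_*(A)$ is free by checking the twisted action is faithful; your version is slightly cleaner in that you exhibit the explicit $A$-linear isomorphism $\varphi\colon A\to{}_\varphi A$ and work with $A^{\oplus n}$ directly, avoiding the reduction to indecomposables.
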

\begin{proof}
Because $\varphi_*$ is an additive functor, it is enough to consider  indecomposable projective modules.
Assume that $V\in \Apmod$ is indecomposable, then there exists a projective $A$-module $M$ such that $V\oplus M$  is a  free $A$-module and it is isomorphic to $A$. Then we have
\be\label{eq:phi-additivie}
\varphi_*(A) \cong \varphi_*(V\oplus M) \cong \varphi_*(V)\oplus \varphi_*(M)\gp
\ee
We note that  the action on LHS $a\cdot_\varphi 1=\varphi(a)\cdot 1$ is zero if and only if $\varphi(a)$ is zero.
 But we assumed that $\varphi\in\Aut(A)$, i.e.\ $\varphi$ is an isomoprhism, therefore $a\cdot_\varphi 1$ is zero iff $a=0$. So, the module $\varphi_*(V\oplus M)$ is free as well, but this means that each direct summand of it is projective. Using~\eqref{eq:phi-additivie}, we thus conclude that $\varphi_*(V)$ is projective. By definition, $\Apmod$ is a full subcategory, and $\varphi_*$ is identity on morphisms, so we have that  $\varphi_*$ is an endofunctor on $\Apmod$.
\end{proof}

We now introduce the $0$-th Hochschild homology of $A$ relative to an automorphism $\varphi$:
$$
\HH(A,\varphi):=A\big/\bigl(ab-\varphi(b)a\bigr)\gp
$$
 Notice the similarity with~\eqref{endotra}.

\newcommand{\hphi}{\hat{\phi}}

\begin{theorem}\label{thm:hchextension}
 Let $A$ be a finite dimensional algebra and $\varphi\in\Aut(A)$. Then,
 \begin{align*}
 \phi\colon \HH(A,\varphi)&\rightarrow \HH(\Apmod,\varphi_*)\gc\\
 [a]&\mapsto [\varphi_*(r_a)\circ\varphi]
 \end{align*}
 is an isomorphism.
\end{theorem}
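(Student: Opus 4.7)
The plan is to exhibit an explicit inverse for $\phi$ by a Morita-style reduction. The starting observation is the $\ok$-linear isomorphism $\tilde\phi\colon A\xrightarrow{\sim}\Hom_A(A,\varphi_* A)$, $a\mapsto\varphi_*(r_a)\circ\varphi$, which expresses the fact that any $A$-linear map out of the regular representation is determined by the image of $1\in A$. Under this identification, $\phi$ becomes the composite of $\tilde\phi$ with the canonical inclusion of the summand $\Hom_A(A,\varphi_* A)$ into $\bigoplus_{P\in\Apmod}\Hom_A(P,\varphi_* P)$ followed by the quotient map onto $\HH(\Apmod,\varphi_*)$.

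Well-definedness of $\phi$ is then a direct computation: for $a,b\in A$, setting $f:=\tilde\phi(a)$ and $g:=r_b\in\End_A(A)$, one checks $\tilde\phi(ab) = \varphi_*(g)\circ f$ and $\tilde\phi(\varphi(b)a) = f\circ g$, so the defining relation $ab-\varphi(b)a$ of $\HH(A,\varphi)$ is carried precisely to a cyclicity relation $\varphi_*(g)\circ f - f\circ g$ of $\HH(\Apmod,\varphi_*)$.

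Surjectivity is established by a two-stage reduction using finite-dimensionality of $A$. Any $P\in\Apmod$ is a direct summand of some $A^n$ via $\iota\colon P\hookrightarrow A^n$ and $\pi\colon A^n\twoheadrightarrow P$ with $\pi\iota = \id_P$. For $f\colon P\to\varphi_* P$, applying cyclicity to the factorization $f = \varphi_*(\pi)\circ(\varphi_*(\iota)\circ f)$ yields $[f] = [\varphi_*(\iota)\circ f\circ\pi]$. Next, decompose $A^n = \bigoplus_i A$ with canonical inclusions $\iota_i$ and projections $\pi_j$ satisfying $\pi_j\iota_i = \delta_{ij}\id_A$, and apply cyclicity once more summand-by-summand: this collapses any morphism $A^n\to\varphi_* A^n$ to the sum of its diagonal matrix entries in $\Hom_A(A,\varphi_* A)$, each of which manifestly lies in the image of $\phi$.

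The main obstacle is injectivity, which I would handle by constructing an inverse $\psi\colon\HH(\Apmod,\varphi_*)\to\HH(A,\varphi)$ through the matrix-trace formula $\psi([f]):=\sum_i[\tilde\phi^{-1}(h_{ii})]$, where $h_{ij}\colon A\to\varphi_* A$ are the matrix entries of $\varphi_*(\iota)\circ f\circ\pi$ for a chosen embedding $P\hookrightarrow A^n$. The delicate point is well-definedness of $\psi$: one must verify independence of the embedding $(\iota,\pi)$---any two such embeddings are related by morphisms factoring through a common $A^m$, and the matrix-trace transforms correctly---and compatibility with the full set of cyclicity relations $f\circ g\sim\varphi_*(g)\circ f$ for arbitrary $g\colon X\to Y$ and $f\colon Y\to\varphi_* X$. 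The latter is the heart of the argument: embedding $X,Y$ into free modules and expanding into matrix entries, any such relation reduces to a $\ok$-linear combination of cyclicity relations among endomorphisms of $A$, and these are precisely the defining relations of $\HH(A,\varphi)$ by the computation from the well-definedness step. Once $\psi$ is well-defined, $\psi\circ\phi = \id$ follows immediately by taking $P=A$ with $\iota=\pi=\id$, while $\phi\circ\psi = \id$ is exactly the content of the surjectivity argument.
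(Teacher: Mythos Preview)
Your proposal is correct and takes essentially the same approach as the paper: both construct the inverse $\psi$ by sending $f\colon P\to\varphi_*P$ to the ``trace'' $\sum_i\bigl(\varphi_*(b_i)\circ f\circ a_i\bigr)(1)$ coming from a splitting $\id_P=\sum_i a_i\circ b_i$ through copies of $A$, and both verify well-definedness by reducing the cyclicity relations to those of $\HH(A,\varphi)$. The only cosmetic difference is that you phrase everything in matrix language via an embedding $P\hookrightarrow A^n$ and separate surjectivity as a preliminary step, whereas the paper works directly with the decomposition and checks the two composites $\psi\circ\phi$, $\phi\circ\psi$ in one pass.
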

\begin{remark}
In~\cite{QLHo} the Hochschild complex is defined in its entirety and related to the corresponding Hochschild-Mitchel complex. 
\end{remark}
\begin{proof}
We first note  that the map $\varphi\colon A\rightarrow\varphi_*(A)$ is an isomorphism of $A$-modules. Therefore, we have an isomorphism
\begin{align}
\hphi\colon\; &A\xrightarrow{\;\cong\;}\Hom_A\bigl(A,\varphi_*(A)\bigr)\gc\notag\\
&a\mapsto \varphi_*(r_a)\circ\varphi\gc
\end{align}
 where $r_a$ denotes multiplication by $a\in A$ on the right.
 
 We see that $\varphi_*(r_{\varphi(a)})\circ\varphi=\varphi\circ r_a$. This implies that $\hphi$ descends to a map $\phi\colon \HH(A,\varphi)\rightarrow \HH(\Apmod,\varphi_*)$. Indeed,   
\begin{align*}
\hphi(ab-\varphi(b)a)&=\varphi_*(r_{ab-\varphi(b)a})\circ\varphi\\
&=\varphi_*(r_b)\circ\bigl(\varphi_*(r_a)\circ\varphi\bigr)-\varphi_*(r_a)\circ\bigl(\varphi_*(r_{\varphi(b)})\circ\varphi\bigr)\\
&=-\bigl(\hphi(a)\circ r_b-\varphi_*(r_b)\circ\hphi(a)\bigr)
\end{align*}
which is zero in $\HH(\Apmod,\varphi_*)$.
In order to check that $\phi$ is bijective, we construct its inverse as follows. Given an $A$-linear map $f\colon P\rightarrow\varphi_*(P)$ where $P$ is projective, take a decomposition of $\id_P=\sum_ia_i\circ\id_A\circ b_i$, with $a_i\colon A\rightarrow P,\,b_i\colon P\rightarrow A$, see~\cite[Lem.\,2.2]{BBG}, and define
\newcommand{\hpsi}{\hat{\psi}} 
\begin{align}
\hpsi\colon\,&\bigoplus\Hom_A\bigl(P,\varphi_*(P)\bigr) \longrightarrow A\notag\\
&f\mapsto\sum_i\varphi_*(b_i)\circ f\circ a_i(1)\gp
\end{align}

For $f\colon P'\rightarrow \varphi_*(P)$, and $g\colon P\rightarrow P'$, with $\id_P=\sum_ia_ib_i,\,\id_{P'}=\sum_ja_j'b_j'$, we have 
 \begin{align*}
 \hpsi(f\circ g)&=\sum_{i,j}\bigl(\varphi_*(b_i)\circ f\circ (a'_j\circ b'_j)\circ g\circ a_i\bigr)(1)\\
&=\sum_{i,j}\varphi\bigl((b_j'\circ g\circ a_i)(1)\bigr)\bigl(\varphi_*(b_i)\circ f\circ a'_j\bigr)(1)\\
&\sim\sum_{i,j}\bigl(\varphi_*(b_i)\circ f\circ a'_j\bigr)(1)(b_j'ga_i)(1)\\
&=\sum_{i,j}\varphi_*(b_j'\circ g\circ a_i)\bigl((\varphi_*(b_i)\circ f\circ a'_j\bigr)(1)1)\\
&=\sum_j\bigl(\varphi_*(b_j')\circ\varphi_*(g)\circ f\circ a_j'\bigr)(1)\\
&=\hpsi\bigl(\varphi_*(g)\circ f\bigr) \gp
 \end{align*}
We thus have a well-defined map $\psi\colon\HH(\Apmod,\varphi_*)\rightarrow \HH(A,\varphi)$. It can be seen that $\psi\circ\phi=\id_{\HH(A,\varphi)}$, by using the trivial decomposition of $\id_A$.

Let $f\colon P\rightarrow\varphi_*(P)$ and $\id_P=\sum_ia_ib_i$, define $a=\sum_i\bigl(\varphi_*(b_i)fa_i\bigr)(1)$. Then $r_a\circ\varphi=\sum_i\varphi_*(b_i)fa_i\sim f$, whence $\phi\circ\psi=\Id_{\HH(\Apmod,\varphi_*)}$.

Finally, to see that this construction does not depend on the choice of decomposition, we pick another decomposition $\id_P=\sum_ja'_j\circ \id_A\circ b_j'$  with $a'_j\colon A\rightarrow P,\,b'_j\colon P\rightarrow A$, then we have 
\begin{align*}
\hpsi(f)&=\sum_i\bigl(\varphi_*(b_i)\circ f\circ a_i\bigr)(1)\\
&=\sum_{i,j}\bigl(\varphi_*(b_i)\circ\varphi_*(a'_jb_j')\circ f \circ a_i\bigr)(1)\\
&=\sum_{i,j}\bigl(\varphi_*(b'_j)\circ f \circ a_i\bigr)(1)\varphi_*(b_ia'_j)(1)\\
&\sim\sum_{i,j}\varphi\bigl((b_ia_j')(1)\bigr)\bigl(\varphi_*(b'_j)\circ f \circ a_i\bigr)(1)\\
&=\sum_{i,j}\bigl(\varphi_*(b'_j)\circ f \circ a_i\circ b_i\circ a_j'\bigr)(1)\\
&=\sum_j\bigl(\varphi_*(b'_j)\circ f \circ a_j'\bigr)(1)\gc
\end{align*}
where `$\sim$' symbolises equality in $\HH(A,\varphi)$ and the equality after that is due to the $A$-linearity of $\varphi_*(b_j')\circ f\circ a_i\colon A\rightarrow\varphi_*(A) $.
\end{proof}

\begin{corollary}\label{cor:extTraces}
Let $A$ and $\varphi$ be as above. Then a $\varphi$-cyclic linear form $t$ on $A$ extends uniquely to a trace map $\{\t_P\colon\Hom_A\bigl(P,\varphi_*(P)\bigr)\}$, where
\be\label{eq:trExt}
 \t_P(f)=\sum_it\bigl(\left(\varphi_*(b_i)\circ f\circ a_i\right)(1)\bigr)
\ee
for a given decomposition $\id_P=\sum_ia_ib_i$. In particular,
\be
\t_A\bigl(\varphi_*(r_a)\circ\varphi\bigr)=t(a).
\ee
\end{corollary}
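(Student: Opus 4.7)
The plan is to deduce Corollary~\ref{cor:extTraces} directly from Theorem~\ref{thm:hchextension}. First I would observe the general principle mentioned just after~\eqref{endotra}: trace maps on a $\ok$-linear endocategory $(\catD,\Sigma)$ correspond bijectively to $\ok$-linear forms on $\HH(\catD,\Sigma)$. Specializing to $(\Apmod,\varphi_*)$, a trace map is the same datum as a linear form on $\HH(\Apmod,\varphi_*)$. On the other hand, a $\varphi$-cyclic linear form $t\colon A\to\ok$ is by definition a linear form on $\HH(A,\varphi)=A/(ab-\varphi(b)a)$.

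Next I would invoke the isomorphism $\phi\colon\HH(A,\varphi)\xrightarrow{\sim}\HH(\Apmod,\varphi_*)$ constructed in Theorem~\ref{thm:hchextension}. Dualizing this isomorphism yields a bijective correspondence between $\varphi$-cyclic linear forms on $A$ and trace maps on $(\Apmod,\varphi_*)$, which already gives the existence and uniqueness of the extension. To produce an explicit formula I would use the inverse isomorphism $\psi=\phi^{-1}$, which at the chain level is given by $\hpsi(f)=\sum_i(\varphi_*(b_i)\circ f\circ a_i)(1)$ for any decomposition $\id_P=\sum_i a_i\circ b_i$. Pulling back $t$ along this map gives exactly
\be
\t_P(f)=(t\circ\hpsi)(f)=\sum_i t\bigl((\varphi_*(b_i)\circ f\circ a_i)(1)\bigr)\gc
\ee
which is the claimed formula~\eqref{eq:trExt}. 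The fact that this is independent of the chosen decomposition and well-defined on morphism classes was already verified in the proof of Theorem~\ref{thm:hchextension}.

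Finally I would check the normalization $\t_A\bigl(\varphi_*(r_a)\circ\varphi\bigr)=t(a)$. Taking $P=A$ with the trivial decomposition $\id_A=\id_A\circ\id_A$ (so $a_1=b_1=\id_A$), formula~\eqref{eq:trExt} gives
\be
\t_A\bigl(\varphi_*(r_a)\circ\varphi\bigr)=t\bigl((\varphi_*(\id_A)\circ\varphi_*(r_a)\circ\varphi)(1)\bigr)=t(\varphi(1)\cdot a)=t(a)\gc
\ee
which is what we wanted. Alternatively, this is immediate from the definition of $\phi$, since $\phi([a])=[\varphi_*(r_a)\circ\varphi]$ and the correspondence $t\leftrightarrow\t$ is just the transpose of $\phi$.

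There is no real obstacle here: the whole statement is a direct translation of Theorem~\ref{thm:hchextension} through the identification of trace maps with linear forms on the $0$-th Hochschild homology. The only mild point to be careful about is verifying that the explicit formula~\eqref{eq:trExt} genuinely descends to the quotient, but this is precisely the content of the computation in the proof of Theorem~\ref{thm:hchextension} showing that $\hpsi(f\circ g)=\hpsi(\varphi_*(g)\circ f)$ modulo $\varphi$-commutators.
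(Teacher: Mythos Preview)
Your proposal is correct and follows essentially the same approach as the paper: both identify $\varphi$-cyclic forms with linear forms on $\HH(A,\varphi)$, transport them through the isomorphism of Theorem~\ref{thm:hchextension}, and read off the explicit formula from the inverse $\psi$. You add a bit more detail than the paper (which merely says the agreement with~\eqref{eq:trExt} is easily checked), in particular spelling out the normalization on $P=A$ via the trivial decomposition, but the argument is the same.
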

\begin{proof}
Let $t$ be a $\varphi$-cyclic form on $A$. It can be seen as a linear form on $\HH(A,\varphi)$. Since, $\HH(A,\varphi)\stackrel{\phi}{\cong} \HH(\Apmod,\varphi_*)$, we have that $t\circ\phi^{-1}$ provides a linear form on  $\HH(\Apmod,\varphi_*)$, and hence a trace map on $(\Apmod,\varphi_*)$, given by $t_P(f)=t\bigl(\phi^{-1}(f)\bigr)$ for all $P\in\Apmod, f\colon P\rightarrow\varphi_*(P)$. It can be easily checked that this agrees with \eqref{eq:trExt}.
\end{proof}

\begin{lemma}\label{lem:rpptrivial}
Let $(H,\pivot)$ be a finite type pivotal Hopf $\group$-coalgebra and $\Sigma$ be a module endofunctor on $\Hpmod$. Assume further that $\t$ is a trace map on $(\Hpmod,\Sigma)$. Then for all  $V\in\Hpmod$,  $W\in \vect$
and $f\colon V\otimes {}_\varepsilon W\rightarrow\Sigma(V\otimes {}_\varepsilon W)$ we have
\be\label{eq:rpptrivial}
  \t_{V\otimes {}_\varepsilon W}(f)=\t_V\bigl(\tr^\Sigma_{{}_\varepsilon W}(f)\bigr)\gp
\ee
\end{lemma}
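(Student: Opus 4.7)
\emph{Plan.} The strategy is to reduce to the case $W=\ok$, where ${}_\varepsilon\ok$ is the tensor unit $\one$ of $\Hmod$, and then deduce the identity from $\Sigma$-cyclicity together with the fact that the right partial trace over $\one$ is trivial. The main point is that ${}_\varepsilon W$ carries the trivial $H_1$-action on an underlying vector space, so as an object of $(\Hmod)_1$ it is a direct sum of copies of $\one$, and tensoring against it does not change the module category structure in any essential way.

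\emph{Step 1 (Reduction to $W=\ok$).} Pick a basis $\{w_1,\ldots,w_n\}$ of $W$, giving an $H_1$-linear isomorphism ${}_\varepsilon W\cong{}_\varepsilon\ok^{\oplus n}=\one^{\oplus n}$. Since $\Sigma$ is additive, this yields decompositions $V\otimes{}_\varepsilon W\cong V^{\oplus n}$ and $\Sigma(V\otimes{}_\varepsilon W)\cong\Sigma(V)^{\oplus n}$, under which $f$ is represented by an $n\times n$ matrix $(f_{ij})$ with $f_{ij}\colon V\to\Sigma V$. Both $t_{V\otimes{}_\varepsilon W}$ and the right partial trace $\rptr_{{}_\varepsilon W}$ are additive in this direct sum decomposition, the former by $\ok$-linearity, the latter because $\coev_{\one^{\oplus n}}$ and $\tev_{\one^{\oplus n}}$ split as sums over the basis. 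Thus both sides of~\eqref{eq:rpptrivial} decompose as sums of the corresponding expressions for each $f_{ii}$ with $W=\ok$, reducing the problem to the one-dimensional case.

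\emph{Step 2 ($W=\ok$ case).} Denote by $r_V\colon V\otimes\one\xrightarrow{\sim} V$ the right unit isomorphism in $\Hmod$, and factor $f=h\circ r_V$ with $h:=f\circ r_V^{-1}\colon V\to\Sigma(V\otimes\one)$. Applying the $\Sigma$-cyclicity condition~\eqref{eq:cyc} with $g=r_V$,
\[
t_{V\otimes\one}(f)\;=\;t_{V\otimes\one}(h\circ r_V)\;=\;t_V\bigl(\Sigma(r_V)\circ f\circ r_V^{-1}\bigr).
\]
It remains to identify the right-hand side with $t_V\bigl(\rptr_\one(f)\bigr)$. Working out definition~\eqref{E:PartialLRtrace} for $V=\one$, the duality maps $\coev_\one$ and $\tev_\one$ reduce to the unit isomorphisms of $\one\otimes\one$, and the module coherence axiom gives the equality $r_{\Sigma V}\circ\sigma_{V,\one}=\Sigma(r_V)$. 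A direct unravelling then yields $\rptr_\one(f)=\Sigma(r_V)\circ f\circ r_V^{-1}$, so
\[
t_{V\otimes\one}(f)\;=\;t_V\bigl(\rptr_\one(f)\bigr),
\]
which is the claim. Combining with Step 1 completes the proof.

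\emph{Main obstacle.} Nothing here is deep; the only delicate point is the bookkeeping of unit isomorphisms in Step 2, both for the right unit of $\Hmod$ and within the definition of $\rptr_\one$ using the self-duality of $\one$. By Mac Lane's coherence theorem one may equivalently pass to a strict monoidal model of $\Hmod$, in which $V\otimes\one=V$, $\sigma_{V,\one}=\id$, and $\coev_\one,\tev_\one$ are identities, so that both $\rptr_\one(f)=f$ and $t_{V\otimes\one}(f)=t_V(f)$ become tautologies.
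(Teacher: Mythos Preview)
Your proof is correct and follows essentially the same idea as the paper's: decompose ${}_\varepsilon W$ via a basis (equivalently, write $\id_{{}_\varepsilon W}=\sum_i a_i\circ b_i$ with $a_i\colon\one\to{}_\varepsilon W$ and $b_i\colon{}_\varepsilon W\to\one$), use that the pivot acts trivially on ${}_\varepsilon W$ so that $\tev_{{}_\varepsilon W}$ pairs dual basis vectors by $\delta_{ij}$, and then invoke $\Sigma$-cyclicity. The paper carries this out in a single diagrammatic computation rather than splitting into your two steps, but the content is the same.

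One small correction to your Step~1: the reduction of $\t_{V\otimes{}_\varepsilon W}(f)$ to $\sum_i \t_{V\otimes\one}(f_{ii})$ is \emph{not} a consequence of $\ok$-linearity alone. Writing $f$ as a matrix $(f_{ij})$ and applying linearity gives $\sum_{i,j}\t\bigl(\Sigma(\iota_i)\circ f_{ij}\circ\pi_j\bigr)$; it is $\Sigma$-cyclicity (together with $\pi_j\circ\iota_i=\delta_{ij}\id$) that kills the off-diagonal terms. This is exactly the cyclicity step the paper performs explicitly, so your argument goes through once you invoke it here.
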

\begin{proof}
Consider a decomposition of $\id_{{}_\varepsilon W }=\sum a_ib_i $ with $a_i\colon\ok\rightarrow {}_\varepsilon W,\, b_i\colon{}_\varepsilon W\rightarrow\ok $ with $\ev(b_i\otimes a_j)=b_i\bigl(a_j(1)\bigr)=\delta_{ij}$.

Let $f\colon V\otimes{}_\varepsilon W\rightarrow\Sigma(V\otimes{}_\varepsilon W)$, since the action on ${}_\varepsilon W$ is trivial we have $\ev_V(f\otimes v)=\widetilde{\ev}_V(v\otimes f)$ and hence 
\begin{align*}
\t_V\bigl(\tr^\Sigma_{{}_\varepsilon W}(f)\bigr)&=\sum_{i,j\in I}\t_V\left(\ipic{part-tr-righmodtdec}{0.2}\put(-26,31){\tiny$a_i $}\put(-26,13){\tiny$b_i $}\put(-27.5,-20.5){\tiny$a_j$}\put(-26.5,-34){\tiny$b_j$}\put(-35,-5){\scriptsize$f$} \right)\\
&=\sum_{i,j\in I}b_j(a_i)\t_V\bigl(\Sigma(\id_V\otimes b_i)\circ f\circ(\id_V\otimes a_j)\bigr)\\
&=\sum_{i\in I}\t_V\bigl(\Sigma(\id_V\otimes b_i)\circ f\circ(\id_V\otimes a_i)\bigr)\\
&=\sum_{i\in I}\t_{V\otimes {}_\varepsilon W}\bigl(f\circ(\id_V\otimes a_i)\circ(\id_V\otimes b_i)\bigr)\\
&=\t_{V\otimes {{}_\varepsilon W}}(f)\gp
\end{align*}
\end{proof}

\subsection{Main Theorem}
We are now ready to formulate and prove our main result.

\begin{theorem}\label{thm:main}
Let $(H,\pivot)$ be a finite type pivotal Hopf $\group$-coalgebra and $\modulus$ the modulus of~$H_1$. Then the space of module traces on $\bigl(\Hpmod,R(\modulus)_*\bigr)$ is equal to the space of $\modulus$-symmetrised right $\group$-integrals, and thus one-dimensional. Furthermore, the module trace $\t$ is non-degenerate and determined by
\be
\t_{H_x}(f)=\hat{\rint}_x\bigl(f(1_x)\bigr)\gc \qquad f\in\Hom_{H_x}\bigl(H_x,R(\modulus)_*(H_x)\bigr),\quad x\in G\gp
\ee
Similarly, the space of left module traces on $\bigl(\Hpmod,L(\modulus)_*\bigr)$ is determined by $\modulus$-symmetrised left $\group$-integrals via 
\be\label{eq:l-modtr-mu}
\t_{H_x}(f)=\hat{\rint}^l_x\bigl(f(1_x)\bigr)\gc \qquad f\in\Hom_{H_x}\bigl(H_x,L(\modulus)_*(H_x)\bigl),\quad x\in G\gp
\ee
\end{theorem}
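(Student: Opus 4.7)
The plan is to combine the three main tools assembled in this paper: the Hochschild-type description of trace maps from Corollary~\ref{cor:extTraces}, the Graded Reduction Lemma~\ref{lem:gred}, and the regular-module decomposition of Theorem~\ref{thm:decoregular} together with Lemma~\ref{lem:rpptrivial}. First, since $H_x$ is finite dimensional and $R(\modulus)|_{H_x}$ is an algebra automorphism, Corollary~\ref{cor:extTraces} applied degree-wise gives a canonical bijection between trace maps on $(\Hpmod,R(\modulus)_*)$ and families $\{t_x\colon H_x\to\ok\}_{x\in G}$ of $R(\modulus)$-cyclic linear forms, via $t_x(a)=\t_{H_x}\bigl(R(\modulus)_*(r_a)\circ R(\modulus)\bigr)$; equivalently, $\t_{H_x}(f)=t_x(f(1_x))$ for any $H_x$-linear $f\colon H_x\to R(\modulus)_*(H_x)$. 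By Proposition~\ref{prop:twistgint}, the $\modulus$-symmetrised $G$-integral $\hat{\rint}_x$ is $R(\modulus)$-cyclic, so it produces a candidate trace map; the content of the theorem is that the partial trace axiom singles out precisely this family.

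Next, by the Graded Reduction Lemma~\ref{lem:gred} together with Lemma~\ref{lem:twistproj}, checking that a cyclic $\t$ is a module trace amounts to verifying the single identity
\be\label{eq:plan-partial}
\t_{H_x\otimes H_y}(f)=\t_{H_x}\bigl(\tr^{R(\modulus)}_{H_y}(f)\bigr)
\ee
for all $x,y\in G$ and all $f\in\Hom_{H_{xy}}\bigl(H_x\otimes H_y,R(\modulus)_*(H_x\otimes H_y)\bigr)$. I would transport this via the $H_{xy}$-linear isomorphism $\phi_{x,y}\colon H_{xy}\otimes{}_\varepsilon H_y\xrightarrow{\cong} H_x\otimes H_y$ of Theorem~\ref{thm:decoregular}, setting $\tilde f:=\phi_{x,y}^{-1}\circ f\circ\phi_{x,y}$, which is a morphism $H_{xy}\otimes{}_\varepsilon H_y\to R(\modulus)_*(H_{xy}\otimes{}_\varepsilon H_y)$. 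By Lemma~\ref{lem:rpptrivial} (with $V=H_{xy}$ and $W=H_y$ regarded as a vector space), the partial trace identity for $\tilde f$ over ${}_\varepsilon H_y$ is automatic: $\t_{H_{xy}\otimes{}_\varepsilon H_y}(\tilde f)=\t_{H_{xy}}\bigl(\tr^{R(\modulus)}_{{}_\varepsilon H_y}(\tilde f)\bigr)$, and on the trivial module ${}_\varepsilon H_y$ the right duality reduces to the ordinary vector-space pairing since $\pivot_y$ acts as $\varepsilon(\pivot_y)=1$.

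The main obstacle is the remaining step, which is to recognise that the compatibility between~\eqref{eq:plan-partial} and the ``trivial'' partial trace identity on $H_{xy}\otimes{}_\varepsilon H_y$ is exactly the defining relation~\eqref{eq:Rintrel} of the $\modulus$-symmetrised $G$-integral. Concretely, picking a basis of $H_y$ and computing both sides on the universal element $f$ corresponding under the Hochschild isomorphism to $a\in H_{xy}$, the right-hand side produces the convolution $(t_x\otimes\pivot_y)\circ\Delta_{x,y}(a)$ (the pivot $\pivot_y$ enters through $\widetilde{\ev}_{H_y}$ and the module structure $\sigma_{H_x,H_y}$ of $R(\modulus)_*$), while the left-hand side, after conjugation by $\phi_{x,y}$ and use of Lemma~\ref{lem:rpptrivial}, yields $t_{xy}(a)\cdot 1_y$. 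Comparing these two expressions gives
\be
(t_x\otimes\pivot_y)\circ\Delta_{x,y}(a)=t_{xy}(a)\,1_y\qcq\forall a\in H_{xy}\gc
\ee
which by~\eqref{eq:Rintrel} is precisely the characterising relation of a $\modulus$-symmetrised right $G$-integral. Hence the family $\{t_x\}$ must be proportional to $\{\hat{\rint}_x\}$, and conversely the latter satisfies this relation; thus the space of module traces is one-dimensional with representative $\t_{H_x}(f)=\hat{\rint}_x(f(1_x))$. Non-degeneracy then follows from the well-known non-degeneracy of the $G$-integral pairing on $H_x$ (as $H_x$ is a Frobenius object in the $G$-coalgebra sense), using Corollary~\ref{cor:extTraces} to transport it to arbitrary projectives.

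Finally, for the left statement~\eqref{eq:l-modtr-mu}, I would apply the right case to the opposite Hopf $G^{\mathrm{op}}$-coalgebra $H^{\mathrm{cop}}$: by Remark~\ref{rem:cophook} the endofunctor $L(\modulus)_*$ on $\Hpmod$ coincides with $R(\modulus)_*$ on $H^{\mathrm{cop}}\text{-pmod}$, by Remark~\ref{rem:copint} the modulus of $H^{\mathrm{cop}}$ equals $\modulus$, and a left $G$-integral of $H$ is a right $G^{\mathrm{op}}$-integral of $H^{\mathrm{cop}}$. The already-established right case then transfers verbatim to give the claimed one-dimensional space of left module traces determined by $\hat{\rint}^l_x(f(1_x))$.
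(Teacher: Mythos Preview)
Your proposal is correct and follows essentially the same route as the paper: reduce trace maps to $R(\modulus)$-cyclic linear forms on the $H_x$ via the Hochschild description, use the Graded Reduction Lemma to test the partial trace axiom only on $H_x\otimes H_y$, transport through the isomorphism $\phi_{x,y}$ of Theorem~\ref{thm:decoregular} together with Lemma~\ref{lem:rpptrivial}, and identify the resulting constraint with the characterising relation~\eqref{eq:Rintrel}; the left case via $H^{\mathrm{cop}}$ is exactly what the paper does.

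One point of presentation: the paper runs existence and uniqueness as two separate explicit computations rather than a single equivalence, and for the uniqueness direction it introduces an auxiliary test functional $\nu\in H_y^*$ to upgrade the scalar identity $\t_{H_x\otimes H_y}(f)=\t_{H_x}\bigl(\tr^{\Sigma}_{H_y}(f)\bigr)$ to the $H_y$-valued relation $(t_x\otimes\pivot_y)\Delta_{x,y}(a)=t_{xy}(a)\,1_y$. Your ``main obstacle'' paragraph gestures at this (``picking a basis of $H_y$'') but as written it equates a scalar with an element of $H_y$; when you unpack it you will need exactly this $\nu$-probe, and on the existence side an explicit check that~\eqref{eq:Rintrel} indeed closes the diagram for arbitrary $f$, as the paper does in the step marked~$(*)$.
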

\begin{proof}
First, we show that the $\modulus$-symmetrised $\group$-integral $\hat{\rint}$ provides a module trace. By Theorem \ref{thm:hchextension} this extends to a trace $\t$ on $\bigl(\Hpmod,R(\modulus)_*\bigr)$ with $\t_{H_x}(f)=\hat{\rint}_x\bigl(f(1_x)\bigr)$. According to Lemma~\ref{lem:gred}, $\t$ is a module trace only if \eqref{eq:gtGG} holds, where $\prog_x=H_x$. Using the maps in Figures~\ref{fig:phi} and~\ref{fig:psi} we get
\begin{align*}
\t_{H_x\otimes H_y}(f)&\stackrel{\text{Thm.} \ref{thm:decoregular}}{=}t_{H_x\otimes H_y}(f\circ\phi_{x,y}\circ\psi_{x,y})\\
&\stackrel{\eqref{eq:cyc}}{=}\t_{H_{xy}\otimes {}_\varepsilon H_y}\bigl(R(\modulus)_*(\psi_{x,y})\circ f\circ\phi_{x,y}\bigr)\\
&\stackrel{\text{Lem.} \ref{lem:rpptrivial}}{=}\t_{H_{xy}}\bigl(\tr_{{}_\varepsilon H_y}^\Sigma(R(\modulus)_*(\psi_{x,y})f\phi_{x,y})\bigr)\\
&=\ipic{mug-right-trace-2_gr}{.17}
\put(-40,0){$f$}
\put(-62,50){\scalebox{0.8}{\tiny$\hat{\rint}_{xy}$}}
\stackrel{*}{=}\ipic{mug-right-trace-3_gr}{.17}
\put(-40,-15){$f$}
\put(-62,33){\scalebox{0.8}{\tiny$\hat{\rint}_{x}$}}
\put(-55,18){\scalebox{0.75}{\tiny $\pivot_{y^{-1}}^{\text{-1}}$}}\\
&=\t_{H_x}\bigl(\tr^\modulus_{H_y}(f)\bigr)\gc
\end{align*} 
where in $(*)$ we used \eqref{eq:Rintrel} 
 to simplify the map in the dotted square.

Next assume there is a module trace $\t$ on $\bigl(\Hpmod,R(\modulus)\bigr)$. Define $\lambda_x(a):=\t_{H_x}\bigl(r_a\circ R(\modulus)\bigr)$. Then $\t_{H_x}(f)=\lambda_x\bigl(f(1_{H_x})\bigr)$. Notice that if $\beta\colon W\rightarrow\ok$ is a linear form, it can be seen as an $H_1$-linear map ${}_\varepsilon W\rightarrow\ok$. Then for all $x,y\in\group,\nu\in H_y^*,a\in H_{xy}$ we have
\begin{align*}
\lambda_{xy}(a)\nu(1_y)&=\t_{H_{xy}}\bigl((\varphi_{xy})_*(r_a)\circ R(\modulus)\nu(1_y)\bigr)
     =\t_{H_{xy}}\left(\ipic{prefav-map-rpp}{.17}
     \put(-25,-10){\scriptsize$\nu$}\put(-73,-18){\scalebox{.9}{\scriptsize$R(\modulus)$}}\put(-50,0){\scriptsize$a$}\right)\\
     &\stackrel{\eqref{eq:rpartial}}{=}\t_{H_{xy}\otimes {}_\varepsilon H_y}\left(\ipic{prefav-map-1}{.17}\put(-50,-22){\scalebox{0.9}{\scriptsize$R(\modulus)$}}\put(-27,-3){\scriptsize$a$}\put(-7,-20){\scriptsize$\nu$}\right)
     =\t_{H_{xy}\otimes {}_\varepsilon H_y}\left(\ipic{prefav-map}{.17}\put(-8,-50){\scriptsize$\nu$}\put(-51,-54){\scriptsize$R(\modulus)$}\put(-27,-35){\scriptsize$a$}\right)\\
     &\stackrel{\eqref{eq:cyc}}{=}\t_{H_x\otimes H_y}\left({\ipic{fav-map}{0.17}}\put(-55,-5){\scalebox{0.8}{\tiny $R(\modulus)$}}\put(-16,8){\tiny $\nu$}\put(-35,10){\scriptsize $a$}\right)
     \stackrel{\eqref{eq:rpartial}}{=}\t_{H_x}\left(\ipic{closefav-map}{0.17}\put(-63,-25){\scalebox{0.65}{\tiny $R(\modulus)$}}\put(-29,-5){\tiny $\nu$}\put(-45,0){\scriptsize $a$} \right)\\
    &=\ipic{mug-closefav-2-g}{0.17}\put(-63,-25){\scalebox{0.65}{\tiny $R(\modulus)$}}\put(-29,-5){\tiny $\nu$}\put(-45,-15){\scriptsize $a$}\put(-61,55){\tiny $\lambda_x$}\put(-50,50){\scriptsize$\pivot_y$}
     =\ipic{mug-closedfavred-g}{0.17}\put(-20,7){\tiny $\nu$}\put(-50,-40){\scriptsize $a$}\put(-61,15){\tiny $\lambda_x$}\put(-50,3){\tiny$\pivot_y$}\\   
     &=\nu\bigl((\lambda_x\otimes\pivot_y)\Delta_{x,y}(a) \bigr)     \gp        
\end{align*}  
Since this relation holds for every $\nu\in H_y^*$ it follows that $\lambda_{xy}(a)1_y=(\lambda_x\otimes\pivot_y)\Delta_{x,y}(a)$. Hence $(\lambda_x)$ is a $\modulus$-symmetrised $\group$-integral.

We finally note that non-degeneracy of $\t$ follows immediately from that of $\hat{\rint}$.

 For the  case of left module traces, we first recall that a left $G$-integral of $H$ is a right $G$-integral of  the Hopf $G^{\mathrm{op}}$-coalgebra $H^{\mathrm{cop}}$ with the opposite coproduct. In this case, $S^{-1}_{x^{-1}}$ is the antipode whence $\pivot^{-1}$ works as the pivot.
 Then, the equality~\eqref{eq:l-modtr-mu} follows from applying the result for the right module trace to $H^{\mathrm{cop}}$. 
\end{proof}

\begin{corollary}
The endocategory $\bigl(\Hpmod,R(\nu)_*\bigr)$ has a non-zero right module trace if and only if $\nu=\modulus$ is the modulus of $H_1$.
Similarly, $\bigl(\Hpmod,L(\nu)_*\bigr)$ has a non-zero left module trace if and only if $\nu=\modulus$. In both cases, the module traces are non-degenerate.

In particular, $\Hpmod$ has a non-zero right (left) modified trace if and only if $H_1$ is unimodular. 
\end{corollary}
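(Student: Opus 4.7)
My plan is as follows. The ``if'' direction of both equivalences is already contained in Theorem~\ref{thm:main}: when $\nu=\modulus$ the space of right (resp.\ left) module traces is one-dimensional, spanned by the $\modulus$-symmetrised right (resp.\ left) $G$-integral. For the converse, suppose $\t$ is a non-zero right module trace on $(\Hpmod, R(\nu)_*)$; the goal is to deduce $\nu=\modulus$. Note that $\nu\in H_1^*$ must already be group-like, since by Proposition~\ref{prop:mod-twisthook} this is exactly what is required for $R(\nu)_*$ to be a module endofunctor in the first place.

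First I would use Theorem~\ref{thm:hchextension} together with Corollary~\ref{cor:extTraces} to encode $\t$ by the family of $R(\nu)$-cyclic linear forms
\[
\lambda_x(a) := \t_{H_x}\bigl(r_a\circ R(\nu)\bigr),\qquad a\in H_x,
\]
which in turn determine $\t$ completely via~\eqref{eq:trExt}. The crucial observation is that the large diagrammatic calculation in the second half of the proof of Theorem~\ref{thm:main}---the one going from $\lambda_{xy}(a)\,\nu(1_y)$ down to $\nu\bigl((\lambda_x\otimes\pivot_y)\Delta_{x,y}(a)\bigr)$---uses only the cyclicity of $\t$, the partial trace property, and the fact that the relevant module endofunctor on the nose is $R(\nu)_*$; the specific identity $\nu=\modulus$ is never invoked. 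Running that argument verbatim with $\nu$ in place of $\modulus$ yields the defining relation~\eqref{eq:Rintrel} of a $\pivot$-shifted $G$-integral, so $\lambda=c\,\hat{\rint}$ for some scalar $c\in\ok$. If $c=0$ then $\lambda$ vanishes on each regular representation, and~\eqref{eq:trExt} forces $\t=0$, contradicting our hypothesis; hence $c\neq 0$.

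The next step, where $\nu$ finally enters, is to unpack the $R(\nu)$-cyclicity of $\t$ restricted to $\Hom_{H_x}\bigl(H_x,R(\nu)_*(H_x)\bigr)$. A short direct computation---using that $R(\nu)\colon H_x\to R(\nu)_*(H_x)$ is the canonical $H_x$-linear iso and the elementary identity $r_a\circ r_b=r_{ba}$---translates this cyclicity into the condition $\lambda_x(ab)=\lambda_x\bigl(R(\nu)(b)\,a\bigr)$. Combining with Proposition~\ref{prop:twistgint}, which gives $\hat{\rint}_x(ab)=\hat{\rint}_x\bigl(R(\modulus)(b)\,a\bigr)$, and using $c\neq 0$, I would obtain
\[
\hat{\rint}_x\bigl((R(\nu)(b)-R(\modulus)(b))\,a\bigr)=0 \quad\text{for all } a,b\in H_x.
\]
Specialising to $x=1$, where $\hat{\rint}_1$ is the pivot-shifted right integral of the finite-dimensional Hopf algebra $H_1$, Frobenius non-degeneracy of the integral forces $R(\nu)=R(\modulus)$ as endomorphisms of $H_1$; applying $\varepsilon$ on both sides then yields $\nu=\modulus$.

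The left-handed statement follows by applying the right-handed version just proved to the Hopf $G^{\mathrm{op}}$-coalgebra $H^{\mathrm{cop}}$, whose modulus coincides with $\modulus$ by Remark~\ref{rem:copint}---exactly the device already used at the end of the proof of Theorem~\ref{thm:main}. Non-degeneracy in the case $\nu=\modulus$ is inherited directly from Theorem~\ref{thm:main}. Finally, the unimodularity corollary is the specialisation $\nu=\varepsilon$: then $R(\varepsilon)_*=\Id$, so right (resp.\ left) module traces on $(\Hpmod,R(\varepsilon)_*)$ coincide with ordinary right (resp.\ left) modified traces on $\Hpmod$, and the established equivalence reads $\modulus=\varepsilon$, i.e.\ $H_1$ is unimodular. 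The main obstacle I foresee is verifying carefully that the Theorem~\ref{thm:main} computation is genuinely insensitive to the choice of group-like $\nu$; once that point is trusted, together with the Frobenius non-degeneracy of $\rint_1$ on $H_1$, the remainder is bookkeeping.
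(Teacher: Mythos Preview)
Your proposal is correct and follows essentially the same approach as the paper: extract the linear forms $\lambda_x$ from the trace, observe that the diagrammatic computation in the second half of the proof of Theorem~\ref{thm:main} goes through for any group-like $\nu$ and forces $(\lambda_x)$ to be a (nonzero scalar multiple of the) $\modulus$-symmetrised $G$-integral, then compare the $R(\nu)$-cyclicity identity $\lambda_x(ab)=\lambda_x\bigl(R(\nu)(b)\,a\bigr)$ with Proposition~\ref{prop:twistgint} and use non-degeneracy of the integral to conclude $\nu=\modulus$. The paper's proof is more terse and applies non-degeneracy at a general $x$ rather than specialising to $x=1$, but the argument is otherwise identical.
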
 

\begin{proof}
Following the line of reasoning used in the second part of the previous proof implies that if $t$ is a module trace on $(\Hpmod,R(\gamma)_*)$
for some $\gamma\in H_1^*$ group-like,   it induces a family of maps $\{\lambda_x\colon H_x\rightarrow\ok\}$ which is both a $\modulus$-symmetrised $\group$-integral and satisfies $\lambda_x(ab)=\lambda_x((b\leftharpoonup\nu)a)$. Since the $\modulus$-symmetrised integral satisfies $\lambda_x(ab)=\lambda_x((b\leftharpoonup\modulus)a)$ by Proposition \ref{prop:twistint}, it must necessary be the case that $\lambda_x\bigl((b\leftharpoonup\modulus)a\bigr)=\lambda_x\bigl((b\leftharpoonup\nu)a\bigr)$. By the non-degeneracy of the $\group$-integral it follows that $b\leftharpoonup\modulus=b\leftharpoonup\nu$ whence $\nu=\modulus$.
\end{proof}

\section{Application to Hopf algebras}\label{sec:taft}
An important consequence of the theory of module traces for Hopf $G$-coalgebras formulated in Theorem~\ref{thm:main} occurs when $\group={1}$. In this case $H=H_1$ is a finite-dimensional pivotal Hopf algebra. Our main theorem implies then
\begin{corollary}\label{cor:Hopf}
Let $(H,\pivot)$ be a finite-dimensional pivotal Hopf algebra with modulus $\modulus\in H^*$. The space of right module traces on $\bigl(\Hpmod,R(\nu)_*\bigr)$ is $1$-dimensional for $\nu=\modulus$ and trivial otherwise. In the first case, the traces are non-degenerate and determined by
\be
 \t_H(f)=\rint\bigl(\pivot f(1)\bigr)
\ee 
for all $f\colon H\rightarrow R(\modulus)_*(H)$ and $\rint$ is a right integral.\\
Similarly, the space of left module traces  on $\bigl(\Hpmod,L(\nu)_*\bigr)$ is $1$-dimensional for $\nu=\modulus$ and trivial otherwise. And in the first case we have
\be
 \t_H(f)=\rint^l\bigl(\pivot^{-1} f(1)\bigr)
\ee 
for all $f\colon H\rightarrow L(\modulus)_*(H)$  and $\rint^l$ is a left integral.\\
In particular, $\Hpmod$ has a non-zero right or left modified trace in the sense of the definition in~\eqref{def:mod-tr}  if and only if $H$ is unimodular. 

\end{corollary}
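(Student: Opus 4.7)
The plan is to specialise Theorem~\ref{thm:main} to the trivial group $\group = \{1\}$. In this degenerate grading a Hopf $\group$-coalgebra of finite type is simply a finite-dimensional Hopf algebra $H = H_1$, a right $\group$-integral is an ordinary right integral $\rint \in H^*$, and the $\modulus$-symmetrised integral becomes $\hat{\rint}(a) = \rint(\pivot\, a)$ (there is only one component, indexed by $x = 1$, with $\pivot_1 = \pivot$).

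First I would invoke the right-trace half of Theorem~\ref{thm:main}: the space of right module traces on $\bigl(\Hpmod, R(\modulus)_*\bigr)$ is one-dimensional, non-degenerate, and determined by
\[
\t_H(f) \;=\; \hat{\rint}(f(1)) \;=\; \rint\bigl(\pivot\, f(1)\bigr).
\]
For $\nu \neq \modulus$, the corollary following Theorem~\ref{thm:main} (again specialised to $\group = \{1\}$) guarantees that $\bigl(\Hpmod, R(\nu)_*\bigr)$ admits no non-zero right module trace.

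Next I would derive the left case by the same $H^{\mathrm{cop}}$-trick used at the end of the proof of Theorem~\ref{thm:main}: a left module trace on $\bigl(\Hpmod, L(\nu)_*\bigr)$ is the same data as a right module trace on $H^{\mathrm{cop}}\text{-pmod}$ for the opposite Hopf algebra $H^{\mathrm{cop}}$. By Remark~\ref{rem:copint} the modulus of $H^{\mathrm{cop}}$ still equals $\modulus$; the antipode of $H^{\mathrm{cop}}$ is $S^{-1}$, so the appropriate pivot is $\pivot^{-1}$; and a right integral of $H^{\mathrm{cop}}$ is a left integral $\rint^l$ of $H$. Substituting these ingredients into the right-trace formula yields
\[
\t_H(f) \;=\; \rint^l\bigl(\pivot^{-1} f(1)\bigr),
\]
and triviality for $\nu \neq \modulus$ follows from the same corollary applied to $H^{\mathrm{cop}}$.

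Finally, to obtain the ``in particular'' clause, observe that a modified trace in the sense of~\eqref{def:mod-tr} is by definition a right (resp.\ left) module trace for the identity endofunctor, and that $R(\varepsilon)(a) = \varepsilon(a_{(1)})a_{(2)} = a$, so that $R(\varepsilon)_* = L(\varepsilon)_* = \Id$. Combining this with the two previous steps, a non-zero modified trace exists exactly when the constraint $\nu = \modulus$ is compatible with $\nu = \varepsilon$, i.e.\ when $\modulus = \varepsilon$, which is the definition of unimodularity. No genuine obstacle is expected here: all the substantive work was already carried out in Theorem~\ref{thm:main}, and the corollary is essentially a bookkeeping exercise, collapsing the $\group$-graded formulas to a single component and carrying out the passage from $H$ to $H^{\mathrm{cop}}$ for the left variant.
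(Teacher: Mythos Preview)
Your proposal is correct and follows exactly the approach the paper intends: the corollary is stated without a separate proof, introduced only by the sentence ``Our main theorem implies then,'' so specialising Theorem~\ref{thm:main} and its immediate corollary to $\group=\{1\}$ is precisely what is expected. Your handling of the left case via $H^{\mathrm{cop}}$ and of the unimodularity clause via $R(\varepsilon)_*=\Id$ simply spells out the details the paper leaves implicit.
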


Many examples for the unimodular case were described in~\cite[Sec.\,7\&8]{BBG}, where the modified trace was explicitly calculated for the small quantum groups of ADE type. 
In the remainder of this section, we show examples of computation of module traces for non-unimodular Hopf algebras. An important class of such Hopf algebras are the Taft algebras.

\vskip-3mm
\mbox{}\\
{\bf Taft Hopf algebras.}\;
Fix $r\in\oN$, and $\zeta$ a primitive $r$-th root of $1$. Consider the $\oC$-algebra $T_r$ generated by $E,K$ with the relations
$$
 E^r=0\qcq K^r=1\qcq KE=\zeta EK\gp
$$
It has the basis $\{E^iK^j\}_{i,j=0,1,...,r-1}$, i.e.\ an algebra of dimension $r^2$.
The Hopf algebra structure is
$$
\begin{array}{lll}
 \Delta(E)=1\otimes E+E\otimes K\gc\quad &\varepsilon(E)=0\gc &\quad S(E)=-EK^{-1}\gc\\
 \Delta(K)=K\otimes K\gc &\varepsilon(K)=1\gc &\quad S(K)=K^{-1}\gp
\end{array}
$$
We see then that $S^2(E)=KEK^{-1}$ and therefore $K$ provides a pivot element. The coproduct can be written on the basis as 
\be\label{eq:copbas}
 \Delta(E^iK^j)=\sum_{l=1}^{i}\left[\begin{array}{c}i\\ l \end{array}\right]_\zeta E^lK^j\otimes E^{i-l}K^{l+j}\gp
\ee 
One can check that the linear form 
\be\label{eq:Taftrint}
\rint(E^mK^n)=\zeta\delta_{m,r-1}\delta_{n,1}
\ee
 is a right integral for the algebra, which gives us the $\modulus$-symmetrised integral:
 \be\label{eq:Taft-hatmu}
 \hat{\rint}(E^mK^n)=\delta_{m,r-1}\delta_{n,0}\gp
 \ee
 We have also   a left cointegral: 
  \be
  \coint=E^{r-1}\sum_{i=0}^{r-1}\zeta^{-i}K^i\gp
  \ee 
   The modulus of $T_r$  can easily be checked to be given by 
   \be\label{eq:Taftmodulus}
   \modulus(K)=\zeta \gc \qquad \modulus(E)=0\gp
   \ee

For all $s=0,1,...,r-1$, the elements 
\be\label{eq:Taft-es}
e_s=\frac{1}{r}\sum_{i=0}^{r-1}\zeta^{si}K^i
\ee
are orthogonal primitive idempotents and $\sum_s e_s=1$. Hence the projective indecomposable  modules are given by 
\be\label{eq:Ps-def}
P_s:=T_r\cdot e_s\gp
\ee
and these are projective covers of the irreducible representations (which are all 1-dimensional) where $E$ acts by zero and $K$ acts by $\zeta^{-s}$.  Observe that  
\be\label{eq:K-es}
Ke_s=\zeta^{-s}e_s
\ee 
and so $P_s$ has the linear basis 

\be
P_s = \langle \, E^ie_s\, | \,  i=0,1,...,r-1\,\rangle_\ok\gp
\ee 

From the definition~\eqref{eq:Ps-def}, we see that all intertwining maps $f\colon P_s\rightarrow P_l$ are given by right multiplication with an element of the form $e_s\cdot a\cdot e_l$, for $a\in T_r$. The space of the intertwiners is one dimensional for all pairs $(s,l)$ since 
\be\label{eq:space-int}
\Hom_{T_r}(P_s,P_l) \cong
e_s\cdot T_r\cdot e_l=\left\{\begin{array}{lr}
\left\langle  E^{l-s}e_l\right\rangle_\ok \gc \quad & l\geq s\gc\\
\left\langle   E^{r+l-s}e_{l}\right\rangle_\ok \gc \quad& s> l\gc
\end{array} \right.
\ee
where we used~\eqref{eq:K-es} and the commutation relations $E e_l = e_{l-1} E$  
together with the orthogonality condition $e_s e_l = \delta_{l,s} e_l$.

We further study the automorphism $R(\modulus)$ and the corresponding endofunctor $R(\modulus)_*$, introduced in ~\eqref{eq:rhook} and~\eqref{defeq:twistftor}. 
Since $R(\modulus)$ is an algebra morphism it suffices to compute $R(\modulus)(E)=E$
and $R(\modulus)(K)=\zeta K$.

We notice then that the $\ok$-linear isomorphism  
\be\label{eq:twistexample}
\begin{array}{lccl}
f_s\colon &R(\modulus)_*(P_{s})&\xrightarrow{\; \cong \;} &P_{s-1}\\
& E^ie_{s} &\mapsto & E^ie_{s-1}
\end{array}
\ee
is also $T_r$-linear, here we identified $P_{-1}\equiv P_{r-1}$ and $e_{-1}\equiv e_{r-1}$ . Indeed,
\begin{align*}
f_s(K\cdot_{R(\modulus)}E^ie_{s})&=\zeta\zeta^i\zeta^{-s}E^ie_{s-1}=\zeta^iE^iKe_{s-1}=Kf_s(E^ie_{s})\gc\\
f_s(E\cdot_{R(\modulus)}E^ie_s)&=E^{i+1}e_{s-1}=E^if_s(e_s)\gp
\end{align*}

Now we are ready to compute a module trace $\t$ on $\bigl(T_r\text{-pmod},R(\modulus)_*\bigr)$, which is a family of maps
$$
\t_{P_s}\colon \; \Hom_{T_r}\bigl(P_s,R(\modulus)_*(P_s)\cong P_{s-1}\bigr) \to \mathbb{C}.
$$
 Recall that this trace map exists and is expressed via the integral following Corollary~\ref{cor:Hopf}.

We begin with the case $s>0$. By~\eqref{eq:space-int} for $l=s-1$, the space  $\Hom_{T_r}\bigl(P_s,R(\modulus)_*(P_s)\bigr)$ is spanned by the map $R^s=f_s^{-1}\circ r_{E^{r-1}e_{s-1}}$:
\begin{align}\label{eq:Psnot0gen}
R^s\colon \; &P_s\rightarrow P_{s-1}\xrightarrow{\;\cong\;} R(\modulus)_*(P_s)\notag\\
&e_s\mapsto E^{r-1}e_{s-1}\mapsto E^{r-1}e_s
\end{align}
and the  module trace on it is computed to be 
$$
\t_{P_s}(R^s)=\hat{\rint}(R^s(e_s))=\hat{\rint}(E^{r-1}e_s)=\frac{1}{r}\gc
$$
where we used~\eqref{eq:Taft-hatmu}.

For $s=0$, the space
  $\Hom_{T_r}\bigl(P_0,R(\modulus)_*(P_0)\bigr)$
 is spanned by $R^0=f_0^{-1}\circ r_{E^{r-1}e_{r-1}}$
\begin{align}\label{eq:P0r-morph}
R^0\colon \;& P_0\rightarrow P_{r-1}\xrightarrow{\;\cong\;} R(\modulus)_*(P_0)\gp\notag\\
&e_0\mapsto E^{r-1}e_{r-1}\mapsto E^{r-1}e_0\gp
\end{align}
  Then the module trace on the morphism~\eqref{eq:P0r-morph} can be computed to be
$$
\t_{P_0}(R^0)=\hat{\rint}\left(R^0(e_0)\right)=\hat{\rint}\left(E^{r-1}e_{0}\right)=\frac{1}{r}\gp
$$
In both cases, we used Corollary~\ref{cor:extTraces} with the decomposition 
  $$
  T_r\xrightarrow{\; r_{ e_s}\;} T_re_s\hookrightarrow T_r\gc
  $$
  i.e. we chose $a_i$ as the inclusion and $b_i=r_{e_s}$ .

\section{$G$-graded case: non-restricted quantum $sl(2)$}\label{sec:Borel}
In this section, we give an example of a module trace for Hopf group-coalgebras, and it is based on Borel quantum groups.

Recall that \textit{non-restricted} quantum algebra $\Uqua$ is a Hopf algebra generated by $E,F,K,K^{-1}$ with relations 
$$
KE=q^2EK\qcq KF=q^{-2}FK\qcq[E,F]=\frac{K-K^{-1}}{q-q^{-1}}\qcq KK^{-1}=1=K^{-1}K
$$
where $q=e^{\frac{i\pi}{r}}$  is a $2r$-th root of $1$. It has a Hopf algebra structure given by
$$
\begin{array}{lcl}
\Delta(E)=1\otimes E+E\otimes K\gc\quad&\varepsilon(E)=0\gc\quad& S(E)=-EK^{-1}\gc\\
\Delta(F)=K^{-1}\otimes F+F\otimes 1\gc\quad&\varepsilon(F)=0 \gc\quad&S(F)=-KF\gc\\
\Delta(K)=K\otimes K \gc\quad&\varepsilon(K)=1\gc\quad& S(K)=K^{-1}\gp
\end{array}
$$
It is worth noting that $E^r,F^r,K^r$ are central elements of this algebra and that $\pivot=K^{1+nr}$ provides a pivotal structure for any $n\in\mathbb{Z}$. Consider the Hopf subalgebra $C'=\C[F^r,E^r,K^{\pm r}]$, and $\group'=Alg(C',\C)$.

\begin{proposition}\label{prop:charqgp}
$\group'\cong\C\rtimes\C^\times\ltimes\C$, where the left action of $\C^\times$ on $\C$  is given by $z\rhd x=zx$ and the right action $y\lhd z=z^{-1}y$
\end{proposition}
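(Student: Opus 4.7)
The plan is to first check that $C'$ is genuinely a Hopf subalgebra of $\Uqua$, then parametrise its algebra maps to $\C$ by triples in $\C \times \C^\times \times \C$, and finally compute the convolution product on $\group'$ and match it with the described double semidirect product.

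First I would verify that $C' = \C[F^r, E^r, K^{\pm r}]$ is closed under $\Delta$, $\varepsilon$, and $S$. The elements $E^r, F^r, K^r$ are central in $\Uqua$ at $q = e^{i\pi/r}$, so they generate a commutative subalgebra. For the coproduct of $E^r$, note that $(E \otimes K)(1 \otimes E) = q^2 (1 \otimes E)(E \otimes K)$, so the $q$-binomial theorem applied to $\Delta(E) = 1 \otimes E + E \otimes K$ gives
\[
\Delta(E)^r = \sum_{k=0}^{r} \qbin{r}{k}_{\!q^2} (E \otimes K)^k (1 \otimes E)^{r-k}.
\]
Since $q^2$ is a primitive $r$-th root of unity, the $q^2$-binomial coefficients $\qbin{r}{k}_{\!q^2}$ vanish for $0 < k < r$, so $\Delta(E^r) = 1 \otimes E^r + E^r \otimes K^r$. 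A symmetric computation yields $\Delta(F^r) = K^{-r} \otimes F^r + F^r \otimes 1$, and obviously $\Delta(K^{\pm r}) = K^{\pm r} \otimes K^{\pm r}$. The antipode values $S(E^r) = -E^r K^{-r}$ (up to the sign coming from the $q$-commutation), $S(F^r) = -K^r F^r$, $S(K^{\pm r}) = K^{\mp r}$ all lie in $C'$, as do the counit values. Hence $C'$ is a Hopf subalgebra.

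Next I would identify the underlying set of $\group'$. By the PBW decomposition of $\Uqua$, the subalgebra $C'$ is, as a commutative algebra, isomorphic to $\C[F^r, E^r] \otimes \C[K^r, K^{-r}]$, i.e.\ a polynomial ring in three generators subject only to $K^r \cdot K^{-r} = 1$. Thus an algebra morphism $f \colon C' \to \C$ is uniquely specified by the triple $(y,z,x) := (f(F^r), f(K^r), f(E^r))$, with $y, x \in \C$ arbitrary and $z \in \C^\times$ (the invertibility forced by $f(K^r) f(K^{-r}) = 1$). This gives a set-bijection $\group' \cong \C \times \C^\times \times \C$.

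Finally I would compute the group law on $\group'$ via convolution. Using the coproduct values above,
\[
(f_1 * f_2)(F^r) = y_1 + z_1^{-1} y_2, \qquad (f_1 * f_2)(K^r) = z_1 z_2, \qquad (f_1 * f_2)(E^r) = x_2 + z_2 x_1,
\]
so the induced multiplication on triples is $(y_1, z_1, x_1)(y_2, z_2, x_2) = (y_1 + z_1^{-1} y_2,\, z_1 z_2,\, x_2 + z_2 x_1)$. Reading off: the first two coordinates describe the semidirect product $\C \rtimes \C^\times$ built from the right action $y \lhd z = z^{-1} y$, and the last two describe $\C^\times \ltimes \C$ built from the left action $z \rhd x = zx$; the two $\C$-factors commute with each other, matching the fact that $F^r$ and $E^r$ are primitive up to the $K^{\pm r}$ twist and the two $\C$-subgroups of $\group'$ commute. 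The mildly delicate step is the $q^2$-binomial vanishing at the root of unity, but this is a standard `quantum Frobenius' identity and poses no real difficulty; everything else is a direct bookkeeping exercise.
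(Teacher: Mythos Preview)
Your proposal is correct and follows essentially the same approach as the paper: parametrise characters by the triple $(f(F^r),f(K^r),f(E^r))$ and compute the convolution product using the coproduct on the generators, with the key input being the vanishing of the $q^2$-binomial coefficients $\qbin{r}{k}_{q^2}$ for $0<k<r$. The only difference is that you front-load the verification that $C'$ is a Hopf subalgebra and work out $\Delta(E^r)$ and $\Delta(F^r)$ explicitly before applying the characters, whereas the paper folds this into the convolution computation itself; the resulting group law is identical.
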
 
\begin{proof}
Define the map $\group'\ni f\mapsto \left(f(F^r),f(K^r),f(E^r)\right)$. This is clearly a bijection. To see that it is a morphism of groups for $f,g\in\group$  compute $fg(K^r)=f(K^r)g(K^r)$ and 
\begin{align*}
fg(E^r)&=\sum_{i=0}^r\left[\begin{array}{c}r\\i\end{array}\right]_{q^2}f(E^i)g(E^{r-i}K^i)\\
&=g(E^r)+f(E^r)g(K^r)\\
\intertext{and similarly,}
fg(F^r)&=f(F^r)+f(K^r)^{-1}g(F^r)\gp
\end{align*}
\end{proof}

Using Example \ref{ex:quotHGC} (2), we can define a finite type Hopf $\group'$-coalgebra $\{(\Uqua)_{y,z,x}\}_{(y,z,x)\in \group'}$
 where $(\Uqua)_{y,z,x}$ is $\Uqua$ modulo the relations $E^r=x,\, F^r=y,\, K^r=z$, with $(y,z,x)\in\group'$ under the identification from Proposition \ref{prop:charqgp}. In \cite{Phu}, the Hopf group-coalgebra coming from the subgroup $\{0\}\times\C^\times\times\{0\}\subseteq\group'$ was studied. 

\vskip-3mm
\mbox{}\\
{\bf Borel subalgebra and its trace.}\;
We are interested in the subalgebra $B$ generated by $E,K$, also called the positive Borel subalgebra.
 Let $C=\C[E^r,K^{\pm r}]$ and $\group=Alg(C,\C)\cong \C^\times\ltimes\C$. We have a finite type Hopf $\group$-coalgebra $\{B_{z,x}\}_{(z,x)\in \group}$. Here, each $B_{z,x}$ is the algebra $B$ modulo the extra relations $K^r=z$ and $E^r=x$. It inherits the pivotal structure given by $B_{z,x}\ni\pivot_{z,x}=K^{nr+1}=z^nK$ .
 
We note that each $B_{z,x}$ for non-zero $x$ is isomorphic to the matrix algebra $\mathrm{Mat}_{r\times r}(\oC)$, while $B_{1,0}$ is the Taft algebra from the previous section where one has to set $\zeta=q^2$. For the latter algebra we thus immediately get the integral  $\rint_{(1,0)}(E^iK^j)=\delta_{i,r-1}\delta_{j,0}$ as in~\eqref{eq:Taftrint}. 
Using Definition~\ref{def:grint} we can compute in general
\begin{align*}
\rint_{(z,x)}(E^lK^j)1_{(z,x)}&=\sum_{i=0}^l\Biggl[\begin{array}{c}l\\i\end{array}\Biggr]_{q^2}\rint_{(1,0)}(E^iK^j)E^{l-i}_{(z,x)}K^{j+i}_{(z,x)}\\
&=\left\{\begin{array}{lc}
0 & l\neq r-1\gc\\
\rint_{(1,0)}(E^lK^j)K^r_{(z,x)}& l=r-1\gp
\end{array}\right.
\end{align*}
That is, 
\be
\rint_{(z,x)}(E^iK^j)=z\delta_{i,r-1}\delta_{j,1}
\ee
 is a right $\group$-integral, and hence 
 \be
 \hat{\rint}_{(z,x)}(E^iK^j)=z^{1+n}\delta_{i,r-1}\delta_{j,0}
 \ee
  is a $\modulus$-symmetrised $\group$-integral.

The algebra $B_{z,x}$ has primitive idempotents
$$
e_s=\frac{1}{r}\sum_{i=0}^{r-1}z^{-i/r}q^{2is}K^i
$$
and thus there are $r$ projective indecomposables $V_s=B_{z,x}e_s$ with bases $\{v_i^s=E^ie_s\}$ with the action given by $Kv_i^s=z^{-1/r}q^{2(i-s)}v_i^s$, $Ev_i^s=v_{i+1}^s$ for $i\neq r-1$ and $Ev_{r-1}^s=xv_0.$

 We notice that they can be also considered as representations  of the original (infinite-dimensional) Borel algebra $B$. Indeed, $B$  covers each $B_{z,x}$, so by the pull-back the modules above are $B$-modules for all $(z,x)$. Moreover, it is easy to see that all indecomposable projectives in the category of finite-dimensional $B$-modules are classified by these ones.

Using ~\eqref{eq:Taftmodulus} we get $\modulus(E)=0$, $\modulus(K)=q^2K$ and therefore $R(\modulus)(E)=E$, $R(\modulus)(K)=q^2K $ for all $B_{z,x}$. We then get $R(\modulus)_*(V_s)\cong V_{s-1} $ via the map $v^s_{i}\mapsto v^{s-1}_i$. 
Notice that
$$
\Hom_{B_{z,x}}(V_s,V_l)\cong e_sB_{z,x}e_l=\left\{\begin{array}{lr}
\langle E^{l-s}e_l\rangle_\ok\gc& l\geq s\gc\\
\langle E^{r+l-s}e_l\rangle_\ok\gc & s>l\gc
\end{array}\right.
$$
is one dimensional.
In particular, $\Hom_{B_{z,x}}(V_s,R(\modulus)_*(V_s))$ is generated by $R_{z,x}^s\colon e_s\mapsto E^{r-1}e_s$. Using Corollary~\ref{cor:extTraces} we get a right module trace determined by $$\t_{V_s}(R_{z,x}^s)=\hat{\rint}_{(z,x)}(E^{r-1}e_s)=\frac{z^{1+n}}{r}\gp $$

\medskip
\appendix
\section{Module Categories}\label{app:modcat}
Let $\CC$ be a monoidal category. A (right) module category over $\CC$ is a tuple $(\modcat,\otact,m,r)$ where $\modcat$ is a category and $\otact:\modcat\times\CC\rightarrow\modcat$ a functor, and ${m:\otact(\otact\times\Id)\rightarrow\otact(\Id\times\otimes)}$, ${r:(-)\otact\mathbb{1}\rightarrow\Id_\modcat}$  natural isomorphisms such that the following diagrams commute for all $M\in\modcat,\,C,D,E\in\CC$
\begin{equation}\label{diag:penta}
\xymatrix@R=20pt@C=40pt@W=10pt@M=10pt{
((M\otact C)\otact D)\otact E\ar[r]^{m}\ar@<-0pt>[dd]_{m\otact\id}&
(M\otact C)\otact(D\otimes E)\ar@<-0pt>[d]_m\\
& M\otact(C\otimes(D\otimes E))\\
 (M\otact(C\otimes D))\otact E  \ar[r]^{m}& M\otact((C\otimes D)\otimes E) \ar[u]_{\id\otact \alpha}
}
\end{equation}
\be\label{diag:triang}
 \xymatrix{
 (M\otact\mathbb{1})\otact C\ar[rr]^m\ar[ddr]^{r\otact\id} & & M\otact(\mathbb{1}\otimes C)\ar[ldd]_{\id\otact\lambda}\\
 & &\\
  &M\otimes C &
 }
\ee
where $\alpha$ is the associativity constraint of $\CC$ and $\lambda$ the left unitor.

Given $\modcat$, $\modcat'$ two module categories over $\CC$, \textit{a module functor} from $\modcat $ to $\modcat'$ is a pair $(\Sigma,\sigma)$ where $\Sigma:\modcat\rightarrow\modcat'$ is a functor and the family of natural isomorphisms $\sigma_{M,C}\colon\Sigma(M\otact C)\rightarrow\Sigma(M)\otact C$ is such that the following diagrams commute:
\be
\xymatrix{
\Sigma\left((M\otact C)\otact D\right)\ar[r]^{\Sigma m}\ar[dd]_\sigma &\Sigma \left(M\otact(C\otimes D)\right)\ar[d]_\sigma \\
& \Sigma (M)\otact (C\otimes D)\\
\Sigma(M\otact C)\otact D\ar[r]^{\sigma\otact\id}&\left(\Sigma(M)\otact C\right)\otact D\ar[u]_{m}
}
\ee
\be
 \xymatrix{
 \Sigma(M\otact\mathbb{1})\ar[rd]^{\Sigma r}\ar[rr]^{\sigma}& &\Sigma(M)\otact\mathbb{1}\ar[ld]_\rho\\
 &\Sigma (M)
 } 
\ee

It is important to note that often $\CC$ carries an extra structure or property, and in this case it is common to assume that $\modcat$ also does, and further to require that $\otimes$ is compatible with this extra structure in some sense. For instance, in case $\CC$ is a $\ok$-linear  category $\modcat$ is usually also considered to  be a $\ok$-linear  category, and $\otimes$ is required to be $\ok$-bilinear.

\begin{remark}
We notice that while $\Hmod$ is $\ok$-linear \textsl{abelian}
our main example of module categories over it -- $\Hpmod$ --  is not abelian. However, it is $\ok$-linear and the action $\otact:=\otimes$ is $\ok$-bilinear. Furthermore, we assume endofunctors of $\Hpmod$ to be $\ok$-linear as well. 
\end{remark} 

\newcommand\arxiv[2]      {\href{http://arXiv.org/abs/#1}{#2}}
\newcommand\doi[2]        {\href{http://dx.doi.org/#1}{#2}}
\newcommand\httpurl[2]    {\href{http://#1}{#2}}

\end{document}